\newcommand\urlprefix{}
\patchcmd{\@footnotetext}{\unskip\strut}{\unskip\@finalstrut\strutbox}{}{}
\newcommand\Bullet{\phantom{(1)}\llap{\textbullet}}
\DeclareRobustCommand{\eoe}{\leavevmode\unskip\penalty9999\hbox{}\nobreak\hfill\quad$\diamond$}
\DeclareSymbolFont{stix-operators}     {LS1}{stix}     {m} {n}
\DeclareSymbolFont{stix-largesymbols}  {LS2}{stixex}   {m} {n}
\DeclareMathSymbol{\stixwedge}     {\mathbin}  {stix-operators}   {"E1}
\DeclareMathSymbol{\stixvee}       {\mathbin}  {stix-operators}   {"E2}
\DeclareMathSymbol{\stixbigwedgeop}{\mathop}   {stix-largesymbols}{"B4}
\DeclareMathSymbol{\stixbigveeop}  {\mathop}   {stix-largesymbols}{"B5}
\DeclareMathDelimiter{\stixlBrack} {\mathopen} {stix-largesymbols}{"E0}{stix-largesymbols}{"06}
\DeclareMathDelimiter{\stixrBrack} {\mathclose}{stix-largesymbols}{"E1}{stix-largesymbols}{"07}
\newcommand\Vii{\mathop{\boldsymbol{\stixbigwedgeop}}\slimits@}
\newcommand\Vuu{\mathop{\boldsymbol{\stixbigveeop}}\slimits@}
\newcommand\lrb[1]{\stixlBrack #1 \stixrBrack}
\newcommand\vii{\boldsymbol{\stixwedge}}
\newcommand\vuu{\boldsymbol{\stixvee}}
\def\myeqctr#1{\expandafter\@myeqctr\csname c@#1\endcsname}
\def\@myeqctr#1{%
  \ifcase#1\or
  \TextOrMath\textasteriskcentered *\or
  \TextOrMath \textdagger \dagger\or
  \ensuremath{\mkern1mu\flat}\or
  \ensuremath{\natural}\or
  \ensuremath{\sharp}\or
  \TextOrMath \textdaggerdbl \ddagger \or
  \TextOrMath \textsection  \mathsection\or
  \TextOrMath \textbardbl \|\or
  \ensuremath{\mkern1mu^\circ}\or
  \textup\textreferencemark\or
  \TextOrMath \textparagraph \mathparagraph\or
  \#\or
  \%\or
  \textborn\or
  \#\#\or
  \%\%\or
  \TextOrMath {\textasteriskcentered\textasteriskcentered}{**}\or
  \TextOrMath {\textdagger\textdagger}{\dagger\dagger}\or
  \TextOrMath {\textdaggerdbl\textdaggerdbl}{\ddagger\ddagger}\or
  \TextOrMath {\textsection\textsection}  {\mathsection\mathsection}\or
  \ensuremath{\mkern1mu^{\circ\circ}}\or
  \ensuremath{\mkern1mu\flat\flat}\or
  \ensuremath{\natural\natural}\or
  \ensuremath{\sharp\sharp}\or
  \textup{\textreferencemark\textreferencemark}\or
  \TextOrMath {\textparagraph\textparagraph} {\mathparagraph\mathparagraph}\else
  \@ctrerr \fi
}
\DeclareMathOperator\Icl{Icl}
\newcommand\Equivariance{Equivariance\xspace}
\newcommand\Equivariant{Equivariant\xspace}
\newcommand\Indtrs{Meet-semi\-lattices\xspace}
\newcommand\Indtr{Meet-semi\-lattice\xspace}
\newcommand\Pfs{\mathrm{P}_{\phantom{*}\llap{$\scriptstyle\mathrm{fe}$}}^*}
\newcommand\Property{Property\xspace}
\newcommand\QQ{\mathbb{Q}} 
\newcommand\Rule[1]{\tag*{\textit{#1}}\label[property]{#1}}
\newcommand\Sis{\Equivariant systems of ideals\xspace}
\newcommand\Trdis{Distributive lattices\xspace}
\newcommand\entrels{entailment relations\xspace}
\newcommand\entrel{entailment relation\xspace}
\newcommand\eqdef{\mathrel{\overset{\makebox[0pt]{\mbox{\normalfont\tiny def}}}{=}}}
\newcommand\equidef{\mathrel{\overset{\makebox[0pt]{\mbox{\normalfont\tiny def}}}{\Longleftrightarrow}}}
\newcommand\equivariance{equivariance\xspace}
\newcommand\equivariant{equi\-va\-riant\xspace}
\newcommand\eti{^\times}
\newcommand\etl{^*}
\newcommand\fa{\mathfrak{a}}
\newcommand\fb{\mathfrak{b}}
\newcommand\fc{\mathfrak{c}}
\newcommand\gen[1]{\mathchoice{{\left\langle{#1}\right\rangle}}{\langle #1\rangle}{\langle #1\rangle}{\langle #1\rangle}} 
\newcommand\grls{\texorpdfstring{\(\ell\)}l-groups\xspace}
\newcommand\grl{\texorpdfstring{\(\ell\)}l-group\xspace}
\newcommand\gui[1]{``#1''}
\newcommand\id{R} 
\newcommand\indtrs{meet-semi\-lattices\xspace}
\newcommand\indtr{meet-semi\-lattice\xspace}
\newcommand\lat{L}
\newcommand\lrbm{\lrb{1..m}}
\newcommand\lt{\preccurlyeq}
\newcommand\mmonoids{meet-monoids\xspace}
\newcommand\mmonoid{meet-monoid\xspace}
\newcommand\mor{\varphi}
\newcommand\ndsp{\textstyle}
\newcommand\presord{preservation of order\xspace}
\newcommand\properties{properties\xspace}
\newcommand\property{property\xspace}
\newcommand\rP{\mathrm{P}}
\newcommand\rh[1][]{\mathrel{{\rhd}_{#1}}}
\newcommand\rsi{regular system of ideals}
\newcommand\scentrels{systems of ideals\xspace}
\newcommand\scentrel{system of ideals\xspace}
\newcommand\set{G}
\newcommand\sis{\equivariant systems of ideals\xspace}
\newcommand\si{\equivariant system of ideals\xspace}
\newcommand\slat{S}
\newcommand\so[1]{\left\{{#1}\right\}} 
\newcommand\som{\sum\nolimits}
\newcommand\sotq[2]{\so{\,#1\mid#2\,}} 
\newcommand\trdis{distributive lattices\xspace}
\newcommand\trdi{distributive lattice\xspace}
\newcommand\vda[1][]{\vdash_{#1}}
\newcommand\Vda{\rh[\!\mathrm{a}]}
\renewcommand\geq{\geqslant}
\renewcommand\leq{\leqslant}
\renewcommand\Delta{\mathsf{\mathchar"7001}}
\renewcommand\Gamma{\mathsf{\mathchar"7000}}
\providecommand\phantomsection{}
\providecommand\texorpdfstring[2]{#1}
\crefname{property}{Property}{Properties}
\crefname{inequality}{Inequality}{Inequalities}
\crefname{equation}{}{}
\renewcommand\cpageref[1]{page~\pageref{#1}}
\theoremstyle{plain}
\newtheorem{theorem}{Theorem}[section]
\newtheorem*{theorem*}{Theorem}
\newtheorem{satzeins}{Satz}
\newtheorem{lemma}[theorem]{Lemma}
\newtheorem{corollary}[theorem]{Corollary}
\newtheorem{proposition}[theorem]{Proposition}
\theoremstyle{definition}
\newtheorem{definition}[theorem]{Definition}
\newtheorem*{acknowledgement}{Acknowledgement}
\theoremstyle{remark}
\newtheorem{remark}[theorem]{Remark}
\newtheorem{remarks}[theorem]{Remarks}
\newtheorem{comment}[theorem]{Comment}
\newtheorem{comments}[theorem]{Comments}
\newtheorem{example}[theorem]{Example}
\date{}
\title{Lattice-ordered groups generated by an ordered group and regular systems of ideals}
\author{Thierry Coquand
\and
Henri Lombardi
\and
Stefan Neuwirth}
\begin{document}
\tolerance 1414
\hbadness 1414
\emergencystretch 1.5em
\hfuzz 0.3pt
\widowpenalty=10000

\maketitle

\begin{abstract}
  Unbounded entailment relations, introduced by Paul \citet{Lor1951},
  are a slight variant of a notion which plays a fundamental rôle in
  logic \citep[see][]{Sco1974} and in algebra \citep[see][]{CACM}. We
  call \emph{\scentrels} their single-con\-clu\-sion counterpart. If
  they preserve the order of a commutative ordered monoid~$G$ and are
  \equivariant w.r.t.\ its law, we call them \emph{\sis for~$G$}: they
  describe all morphisms from~$G$ to meet-semilattice-ordered monoids
  generated by (the image of)~$G$.  Taking an article by
  \citet{Lor1953} as a starting point, we also describe all morphisms
  from a commutative ordered group~$G$ to lattice-ordered groups
  generated by~$G$ through unbounded entailment relations that
  preserve its order, are \equivariant, and satisfy a regularity
  \property invented by \citet{Lor1950}; we call them \emph{regular
    entailment relations}.  In particular, the free lattice-ordered
  group generated by~$G$ is described through the finest regular
  entailment relation for~$G$, and we provide an explicit description
  for it; it is order-reflecting if and only if the morphism is
  injective, so that the Lorenzen-Clifford-Dieudonné theorem fits into
  our framework.  Lorenzen's research in algebra starts as an inquiry
  into the system of Dedekind ideals for the divisibility group of an
  integral domain~$\id$, and specifically into
  \foreignlanguage{german}{Wolfgang Krull}'s
  ``\foreignlanguage{german}{Fundamentalsatz}'' that $\id$ may be
  represented as an intersection of valuation rings if and only if
  $\id$~is integrally closed: his constructive substitute for this
  representation is the \emph{regularisation} of the system of
  Dedekind ideals, i.e.\ the lattice-ordered group generated by it
  when one proceeds as if its elements are comparable.
\end{abstract}

Keywords:
Ordered monoid; \scentrel; \si; morphism from an ordered monoid to a meet-semilattice-ordered monoid; ordered group; unbounded entailment relation; regular entailment relation; \rsi; morphism from an ordered group to a lattice-ordered group; Lorenzen-Clifford-Dieudonné theorem; \foreignlanguage{german}{Fundamentalsatz} for integral domains; Grothendieck $\ell$-group; cancellativity.\smallskip

MSC 2010: Primary 06F20; Secondary 06F05, 13A15, 13B22.

\section*{Introduction}

In this article, all monoids and groups are supposed to be
commutative, and orders are tacitly partial.

The idea of generating a semilattice and a distributive lattice by a
logic-free and set-theory-free formal system, called respectively
``\scentrel'' and ``unbounded entailment relation'' in this article,
dates back to \citet[\S2]{Lor1951} and is motivated there as capturing
how ideal theory provides formal gcds and lcms, i.e.\ formal meets and
joins, for elements of an integral domain. Multiplicative ideal theory
gives rise to ``equivariant'' counterparts to these formal systems.

After studying \citealt{Lor1953}, we have isolated a new axiom that we
call ``regularity''. In this article, our aim is to give a precise
account of Lorenzen's results through ``regular'' entailment
relations. Our main theorem, \cref{ithGOEntrelGRL}, shows that by means of this
axiom, an equivariant entailment relation generates an \grl.

\citet{Lor1950} introduces a construction that embodies the right to
compute in an \si as if it was linearly ordered; we formulate it as
``regularisation'' in \cref{ideflorrel}. \Cref{embedLorgroup} states
that this gives rise to an \grl, the ``Lorenzen group'' associated
with the \si.  The literature on \grls seems not to have taken notice
of these results.

In Lorenzen's work, this approach supersedes another, based on a
procedure for forcing the cancellativity of an \si, ideated by
\citet{Pru1932} and generalised to the setting of ordered monoids in
\citeauthor{Lor1939}'s Ph.D.\ thesis \citeyearpar{Lor1939}. In
\cref{secGoembedsGrl}, we also provide an account for that.

The key step in our presentation is to show that a regular entailment
relation defines by restriction a cancellative \si; in both approaches, the
sought-after \grl is constructed as the Grothendieck \grl of a
cancellative monoid of ideals (\cref{thGoembedsGrl}).

\subsection*{The \foreignlanguage{german}{Fundamentalsatz} for integral domains}
\label{sec:fore-integr-doma-1}

The motivating example for Lorenzen's analysis of the concept of ideal
is \foreignlanguage{german}{Wolfgang Krull}'s
``\foreignlanguage{german}{Fundamentalsatz}'', which states that an
integral domain is an intersection of valuation rings if and only if
it is integrally closed. As \citet[page~111]{Kru35} himself
emphasises, ``Its main defect, that one must not overlook, lies in
that it is a purely existential theorem'', resulting from a
well-ordering argument. In a letter to Heinrich
Scholz\footnote{Scholz-Archiv, Universitäts- und Landesbibliothek Münster,
  \url{http://www.uni-muenster.de/IVV5WS/ScholzWiki/doku.php?id=scans:blogs:ko-05-0647},
  accessed 21st September 2016, published in \citealt[§~V]{neuwirthkonstanz}.} dated 18th April 1953, Krull writes:
``At working with the uncountable, in particular with the
well-ordering theorem, I always had the feeling that one uses fictions
there that need to be replaced some day by more reasonable
concepts. But I was not getting upset over it, because I was convinced
that at a careful application of the common `fictions' nothing false
comes out, and because I was firmly counting on the man who would some
day put all in order. Lorenzen has now found according to my
conviction the right way [\dots]''.

Lorenzen shows that the well-ordering argument in Krull's proof may be
replaced by the performance of computations as if the monoid of
Dedekind ideals was linearly ordered (see \cref{remhist3}), that
integral closedness guarantees that such computations do not add new
relations of divisibility to the integral domain, and that this
performance, formulated as regularisation, generates a lattice-ordered
group. \Cref{embedLorgroup} is in fact an abstract version of the
following theorem (see \cref{thmDivLor2}).
\begin{theorem*}
  The divisibility group of an integral domain embeds into an \grl
  that contains the system of Dedekind ideals if and only if the
  integral domain is integrally closed.
\end{theorem*}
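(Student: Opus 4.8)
The plan is to realise this statement as the concrete instance of \cref{embedLorgroup} obtained by feeding it the system of Dedekind ideals. Write $K$ for the fraction field of~$\id$ and let $G=K\eti/\id\eti$ be the divisibility group, ordered by $\bar a\leq\bar b\Leftrightarrow b/a\in\id$; its positive cone thus corresponds to $\id\setminus\{0\}$ modulo units. First I would check that the finitely generated fractional ideals, with $\bar a_1,\dots,\bar a_m\rhd\bar b$ declared to hold exactly when $b\in(a_1,\dots,a_m)$, constitute an \si for~$G$ in the sense of the paper; \cref{embedLorgroup} then yields the Lorenzen \grl~$\mathrm L$ together with the canonical order-preserving morphism $\iota\colon G\to\mathrm L$ whose image generates~$\mathrm L$ and which contains the system of Dedekind ideals. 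Recall that, as recorded in the introduction, $\iota$ is order-reflecting precisely when it is injective, so that \gui{$G$ embeds into $\mathrm L$} is synonymous with \gui{$\iota$ is order-reflecting}.

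For the direction \emph{embeddability $\Rightarrow$ integrally closed}, I would argue uniformly in \emph{any} \grl~$H$ that contains the system of Dedekind ideals and receives an order-reflecting morphism from~$G$. Let $x\in K\eti$ be integral over~$\id$, say $x^n+a_{n-1}x^{n-1}+\dots+a_0=0$ with $a_i\in\id$; then $x^n\in(1,x,\dots,x^{n-1})$, so the two fractional ideals $(1,x,\dots,x^{n})$ and $(1,x,\dots,x^{n-1})$ coincide. Reading this equality off in~$H$ gives, in additive notation,
\begin{equation*}
  \bigwedge_{i=0}^{n} i\kern1pt\bar x=\bigwedge_{i=0}^{n-1} i\kern1pt\bar x,
  \qquad\text{equivalently}\qquad
  n\kern1pt\bar x\geq\bigwedge_{i=0}^{n-1} i\kern1pt\bar x.
\end{equation*}
This last inequality is an \grl\ identity that forces $\bar x\geq0$: upon checking it in every totally ordered quotient, the meet $\bigwedge_{0}^{n-1} i\kern1pt t$ equals~$0$ when $t\geq0$ and $(n-1)t$ when $t<0$, and only the first case is compatible with $n\kern1pt t$ lying above it. Hence $\bar x\geq0$ in~$H$, and order-reflection then gives $\bar x\geq0$ in~$G$, i.e.\ $x\in\id$. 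Thus the integral closure of~$\id$ is~$\id$ itself.

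For the converse \emph{integrally closed $\Rightarrow$ embeddability}, I would take $H=\mathrm L$ and show that $\iota$ is order-reflecting. So suppose $\iota(\bar x)\geq0$ in~$\mathrm L$ for some $x\in K\eti$; unwinding the regularisation of \cref{ideflorrel}---that is, the explicit \gui{as if linearly ordered} computation that defines the order of~$\mathrm L$---this relation must be witnessed by a finite derivation in the regular entailment relation, and the regularity axiom converts such a derivation back into a monic relation expressing that $x$ is integral over~$\id$. Integral closedness then yields $x\in\id$, i.e.\ $\bar x\geq0$ in~$G$; thus $\iota$ is order-reflecting and $\mathrm L$ is the desired \grl. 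The main obstacle is exactly this backward translation: the forward direction collapses to a single universal \grl\ identity, whereas here one must show that \emph{every} way in which the regularisation can force positivity of~$\bar x$ already encodes an integral dependence over~$\id$ and introduces no genuinely new divisibility. This is where the precise combinatorial shape of the regularity axiom, and the cancellative \si it cuts out by restriction (\cref{thGoembedsGrl}), carry the whole argument.
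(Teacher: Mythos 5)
Your first half (an embedding into an \grl containing the Dedekind system forces integral closedness) is correct, and it is genuinely more direct than the paper's route, which derives this implication from \cref{ithmDivLorsi} together with the easy half of \cref{TLordivgroup}: you read off from $x^n\in\gen{1,x,\dots,x^{n-1}}_\id$ the single inequality $\bigwedge_{i=0}^{n-1}i\bar x\leq_H n\bar x$ and conclude $0\leq_H\bar x$. But your justification of that last step by ``checking it in every totally ordered quotient'' rests on the representation of an \grl as a subdirect product of linearly ordered groups, a well-ordering argument that this constructive paper explicitly refuses (see the comment following \cref{defregsysideals}). The paper's own tool for legitimising exactly this case reasoning is \cref{+x-x}, applied to the regular \entrel induced by~$H$: forcing $0\vda\bar x$ settles the claim trivially, while forcing $\bar x\vda 0$ makes $(n-1)\bar x$ entail each $i\bar x$, so that one can cut $0,\bar x,\dots,(n-2)\bar x$ out of $0,\bar x,\dots,(n-1)\bar x\vda n\bar x$, leaving $(n-1)\bar x\vda n\bar x$, i.e.\ $0\vda\bar x$ by equivariance; \cref{+x-x} then gives $0\vda\bar x$ outright. (Your aside that order-reflection is equivalent to injectivity is recorded in the paper only for the free \grl, not for $\mathrm{Lor}(\id)$; it is fortunately not needed, since ``embeds'' means order-reflecting morphism throughout.)

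The second half is where the proposal has a genuine gap---and it is precisely the hard core of the theorem. The sentence ``the regularity axiom converts such a derivation back into a monic relation expressing that $x$ is integral over~$\id$'' asserts what has to be proved; nothing in the combinatorial form of \cref{R5} produces integral dependence relations, and a priori the regularisation of~$\rh[\mathrm d]$ could force the positivity of elements that are not integral over~$\id$. Ruling this out is the content of a chain of arguments that the proposal never supplies. First, \cref{forcing-dedekind} (resting on \cref{lemrhgamma}) identifies the \si obtained from~$\rh[\mathrm d]$ by forcing finitely many positivities with the Dedekind system of the corresponding ring extension, so that a witness of $0\vda[{\rh[\mathrm d]}]\bar b$ unwinds to memberships $b\in\id[x_1^{\pm1},\dots,x_n^{\pm1}]$ for all $2^n$ choices of signs. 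Next---this is the crucial step---\cref{lemLordivgroup} eliminates the auxiliary elements one sign at a time by the determinant trick: from the two memberships relative to $x$ and to $x^{-1}$ one extracts a matrix identity $(1-M)(x^k)_{-p}^{p}=0$ with $M$ over the relevant ideal and multiplies by the cofactor matrix. Finally, \cref{TLordivgroup} translates the outcome into an integral dependence relation, so that $\rh[\mathrm d]$-closedness becomes equivalent to integral closedness and \cref{embedLorgroup} applies. Your proposal names this ``backward translation'' as the main obstacle but does not overcome it, so the ``if'' direction of the theorem remains unproved.
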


\subsection*{Outline of the article}
\label{sec:outline-article}

\Cref{TrdiNonBornes} deals with \indtrs as generated by \scentrels,
discusses \sis for an ordered monoid and the \mmonoid they generate
(\cref{ThSIJaf}).  \Cref{sec:grls-regular-entrels} deals with \trdis
as generated by unbounded \entrels and discusses regular entailment
relations. \Cref{sec:cons-canc} introduces the Grothendieck \grl of a
meet-monoid as a means for proving \cref{ithGOEntrelGRL}.
\Cref{sec:regularisation} investigates regularisation: applied to the
finest \si, it leads to the finest regular \entrel and to the \grl
freely generated by an ordered group
(\cref{secSysId,subseclcdalaLor,sec:lorenz-cliff-dieud}); applied to
the system of Dedekind ideals for the divisibility group of an
integral domain, it captures the concept of integral dependence and
leads to Lorenzen's theory of divisibility
(\cref{sec:fore-integr-doma,sec:forc-posit-an-1,subsecaritalaLor,secalaLor}).
\Cref{secGoembedsGrl} reminds us of an important theorem by Prüfer
which has led to the historically first approach to the Lorenzen group
associated with an \si.

This article is written in Errett Bishop's style of constructive
mathematics \citep*{B67,BR1987,MRR,CACM}: all theorems can be viewed
as providing an algorithm that constructs the conclusion from the
hypotheses.

\section{\Indtr-ordered monoids and \sis}\label{TrdiNonBornes}
\nopagebreak\subsection{\Indtrs and \scentrels}

Let us define a \emph{\indtr} as a purely equational algebraic
structure with just one law~$\vii$ that is idempotent, commutative,
and associative. We are leaving out the axiom of \indtrs providing a
greatest element because it does not suit monoid theory: meets are
only supposed to exist for \emph{nonempty} finitely enumerated sets.

Let $\Pfs(\set)$~be the set of nonempty finitely enumerated subsets of
an arbitrary set~$\set$.  For a \indtr~$\slat$, let us denote by
$A\rh b$ the relation defined between the sets~$\Pfs(\slat)$
and~$\slat$ in the following way
\citep[see][Satz~1]{Lor1951}:\footnote{The sign~$\rhd$ has been
  introduced with this meaning and with the terminology
  ``single-conclusion entailment relation'' by
  \citet*{rinaldischusterwessel16}.}
\[
  A\rh b \enskip \equidef\enskip \ndsp\Vii A\leq_\slat b\enskip \equidef\enskip b\vii{\Vii A} =_\slat \Vii A\text.
\]
This relation is reflexive, monotone (a property also called
``thinning'' and ``weakening''), and transitive (a \property also
called ``cut'' because it ``cuts''~$c$) in the following sense,
expressed without the law~$\vii$:
\begin{alignat*}{3}
  &&a&\rh a&\text{(reflexivity);}\Rule{S0}\\
  &\text{if $A\rh b$, then }&A,A'&\rh b&\qquad\text{(monotonicity);}\Rule{S1}\\
  \text{if $A\rh c$ and }&A,c\rh b\text{, then }&A&\rh b&\text{(transitivity).}\Rule{S2}
\end{alignat*}
Note that in the context of relations, we shall make the following
abuses of notation for finitely enumerated sets: we write $a$ for the
singleton consisting of~$a$, and $A,A'$ for the union of the sets~$A$
and~$A'$.  These three properties correspond respectively to the
``tautologic assertions'', the ``immediate deductions'', and to an
elementary form of the ``syllogisms'' of the systems of axioms
introduced by Paul \citet[\S~1]{Her1923}, so that the following definition
may be attributed to him;\footnote{\label{beziau}Jean-Yves
  \citet[\S~6]{Bez2006} discusses the relationship of \scentrels with
  Alfred Tarski's consequence operation, which may be compared to the
  relationship of our \cref{defsysideals} of an \si with the
  set-theoretic star-operation: see \cref{remJafsi2} of
  \cref{remJafsi}.} see also Gerhard \citet[\S~2]{Gen1933}, who has coined
the vocables ``thinning'' and ``cut''.  This definition is
introduced as description of a meet-semilattice (see
\cref{thSemiEntRelNB}) in \citet[\S~2]{Lor1951}.
\begin{definition}
  \label{defEntrel-ter}
  A \emph{\scentrel} for a set~$G$ is a reflexive, monotone, and
  transitive relation~$\rh$ between~$\Pfs(\set)$ and~$\set$.
\end{definition}

\begin{comment}
  By our terminology, we emphasise the feedback of algebra to logic
  while being faithful to Lorenzen. In a letter to
  Krull\footnote{Philosophisches Archiv, Universit{\"a}t Konstanz, PL
    1-1-131, published in \citealt[§~M]{neuwirthkonstanz}.} dated 13 March 1944, he writes: ``the insight that a
  \scentrel is intrinsically nothing more than a supersemilattice, and
  a valuation nothing more than a linear order [see
  \cref{sec:fore-integr-doma}], strikes me as the most essential
  result of my effort''.
  \eoe
\end{comment}

\begin{remark}
  \phantomsection
  \label{multiset}
  If instead of nonempty subsets, we had considered non\-empty
  multi\-sets, we would have had to add a contraction rule, and if we
  had considered nonempty lists, we would have had to add also a
  permutation rule.  \eoe
\end{remark}

Note the following banal generalisation of cut, using monotonicity: if
\(A\rh c\) and \(A',c\rh b\), with \(A'\) possibly empty, then
\(A,A'\rh b\).

\subsection{Fundamental theorem of \scentrels}
\label{sec:fund-theor-scentr}

A fundamental theorem holds for a \scentrel for a given set~$\set$: it
states that the relation generates a \indtr~$\slat$ whose order
reflects the relation. This is the single-conclusion analogue of the
better known \cref{thEntRelNB}.

\begin{theorem}[fundamental theorem of \scentrels,
  {\citealp[see][Satz~3]{Lor1951}}]\label{thSemiEntRelNB}\hspace{-.5em}\footnote{\label{lorenzenlack}Our
    statement is the natural counterpart to Lorenzen's when using
    basic notions of universal algebra, and follows readily from his
    sketch of proof.}  Let \(\set\)~be a set and \(\rh\) a \scentrel
  for~\(\set\). Let us consider the \indtr~\(\slat\) defined by
  generators and relations in the following way: the generators are
  the elements of~\(\set\) and the relations are the
  \[
    \text{\(\Vii A\leq_\slat b\) whenever \(A\rh b\).}
  \]
  Then, for all~\((A,b)\) in~\(\Pfs(\set)\times \set\), we have the
  reflection of entailment
  \[
    \text{if \(\Vii A\leq_\slat b\), then \(A\rh b\).}
  \]
  In fact, \(\slat\)~can be defined as the ordered set obtained by
  descending to the quotient of \((\Pfs(\set),{\leq_{\rh}})\)
  by~\(=_{\rh}\), where \(\leq_{\rh}\)~is the meet-semilattice
  preorder defined by
  \begin{equation}
     A\leq_{\rh} B \enskip\equidef\enskip A\rh b\text{ for all \(b\in B\).}\label{defpreorder}
  \end{equation}
\end{theorem}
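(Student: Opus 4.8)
The plan is to realise $\slat$ concretely on $\Pfs(\set)$ and to recognise the claimed presentation through a universal property; the reflection of entailment will then drop out. First I would verify that the relation $\leq_{\rh}$ of~\eqref{defpreorder} is a preorder. Reflexivity $A\leq_{\rh}A$ says $A\rh a$ for every $a\in A$, which follows from \cref{S0,S1}. Transitivity is where the axioms do real work: given $A\leq_{\rh}B$ and $B\leq_{\rh}C$ and a fixed $c\in C$, one has $B\rh c$ together with $A\rh b$ for each $b\in B$, and I would obtain $A\rh c$ by cutting the elements of $B$ out of $B\rh c$ one at a time, each cut being licensed by \cref{S2} after a monotonicity step; this is precisely the banal generalisation of cut recorded above, iterated over $B$.

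Next I would check that $(\Pfs(\set),\leq_{\rh})$ is a meet-semilattice preorder whose meet is union: the union $A,B$ lies below both $A$ and $B$ by \cref{S0,S1}, while any common lower bound $C$ satisfies $C\rh d$ for every $d\in A$ and every $d\in B$, hence $C\leq_{\rh}A,B$. Descending to the quotient by the associated equivalence $=_{\rh}$ then produces a genuine \indtr $\slat_0$, with meet induced by union and with $\Vii A$ represented by the class $[A]$.

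The heart of the matter is to show that $\slat_0$ enjoys the universal property defining $\slat$ by generators and relations. Mapping each $a\in\set$ to $[\{a\}]$, the imposed relations hold, since $A\rh b$ gives $A\leq_{\rh}b$, that is $\Vii A\leq_{\slat_0}b$. For the universal property I would take any \indtr $M$ and any $f\colon\set\to M$ with $\Vii\sotq{f(a)}{a\in A}\leq_M f(b)$ whenever $A\rh b$, and define $\bar f([A])=\Vii\sotq{f(a)}{a\in A}$. Well-definedness and monotonicity both reduce to one implication: $A\leq_{\rh}B$ yields $A\rh b$ for all $b\in B$, whence $\Vii\sotq{f(a)}{a\in A}\leq_M f(b)$ for each such $b$ and therefore $\Vii\sotq{f(a)}{a\in A}\leq_M\Vii\sotq{f(b)}{b\in B}$. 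Preservation of meets is immediate because meet is union, and uniqueness holds because the singletons generate $\slat_0$.

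The resulting isomorphism $\slat\cong\slat_0$ carries $\Vii A$ to $[A]$ and $b$ to $[\{b\}]$, so that $\Vii A\leq_\slat b$ becomes $A\leq_{\rh}b$, which is by definition $A\rh b$: this is the reflection of entailment, and it simultaneously yields the \gui{in fact} description of $\slat$ as the quotient ordered set. I expect the only real obstacle to be the transitivity of $\leq_{\rh}$, as it is the sole place where cut is used and it demands the careful element-by-element elimination over the intermediate set $B$; once $\leq_{\rh}$ is known to be a meet-semilattice preorder, the identification with the presented semilattice and the reflection property follow formally.
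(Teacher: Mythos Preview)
Your proposal is correct and follows essentially the same approach as the paper: both construct the quotient of $(\Pfs(\set),\leq_{\rh})$ by~$=_{\rh}$, verify that $\leq_{\rh}$ is a preorder via the same iterated-cut argument for transitivity, and identify this quotient with the presented \indtr. You are more explicit than the paper in verifying the universal property of the presentation (the paper essentially takes the quotient as the definition of~$\slat$ and leaves that identification implicit), but the substance of the two arguments is the same.
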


\begin{proof}
  Let $A,B\in\Pfs(G)$: one has $\Vii A\leq_\slat\Vii B$ if and only if
  $\Vii A\leq_\slat b$ for all $b\in B$, i.e.\
  $ A\leq_{\rh} B$. The meet-semilattice~$\slat$ may therefore
  be generated in two steps.
  \begin{asparaenum}
  \item Let us check that $\leq_{\rh}$~is a preorder on~$\Pfs(G)$ that
    is compatible with the idempotent, commutative, and associative
    law of set union. Reflexivity of~$\leq_{\rh}$ follows from
    \cref{S0,S1}. Transitivity of~$\leq_{\rh}$ follows from~\cref{S1}
    and a repeated application of~\cref{S2}: if $A\rh c$ for
    every~$c\in C$ and $C\rh b$, then one may cut successively
    the~$c\in C$ and obtain $A\rh b$. Compatibility means that if
    $A\leq_{\rh}B$ and $A'\leq_{\rh}B'$, then $A,A'\leq_{\rh}B,B'$:
    this follows from~\cref{S1}.
  \item We may therefore define~\(\slat\) as the quotient of
    \((\Pfs(\set),{\leq_{\rh}})\) by~\(=_{\rh}\), with law~$\vii_S$
    obtained by descending the law of set union to the
    quotient.\qedhere
  \end{asparaenum}
\end{proof}

Note that the preorder $a\rh b$ on~$\set$ makes
its quotient a  subobject of~$\slat$  in the category of ordered sets.
\begin{remark}
  \phantomsection\label{dmn} The relation~$a\rh b$ is a priori just a
  preorder relation for~$\set$, not an order relation. Let us denote
  the element~$a$ viewed in the ordered set~$\overline{\set}$
  associated to this preorder by~$\overline a$, and
  let~$\overline A=\sotq{\overline a}{a\in A}$ for a subset~$A$
  of~$\set$.  In \cref{thSemiEntRelNB}, we construct a \indtr~$\slat$
  endowed with an order~$\leq_\slat$ that, loosely said, coincides with~$\rh$
  on~$\Pfs(\set)\times \set$; for the sake of rigour, we should have
  written above $\Vii\overline A\leq_\slat\overline b$ rather than
  $\Vii A\leq_\slat b$ in order to deal with the fact that the
  equality of~$\slat$ is coarser than the equality of~$\set$. In
  particular, it is~$\overline{\set}$ rather than~$\set$ which can be
  identified with a subset of~$\slat$.\eoe
\end{remark}

\begin{definition}
  The \scentrel $\rh[2]$ is \emph{coarser} than the \scentrel $\rh[1]$
  if $ A\rh[1]y$ implies \hbox{$ A\rh[2]y$}. One says also that
  $\rh[1]$ is \emph{finer} than~$\rh[2]$.
\end{definition}
This terminology has the following explanation: to say that the
relation~$\rh[2]$ is coarser than the relation~$\rh[1]$ is to say this
for the associated preorders, i.e.\ that $ A\leq_{\rh[1]}  B$
implies $ A\leq_{\rh[2]} B$, and this corresponds to the usual
meaning of \gui{coarser than} for preorders, since
$ A=_{\rh[1]} B$ implies accordingly $ A=_{\rh[2]} B$,
i.e.\ the equivalence relation~$=_{\rh[2]}$ is coarser
than~$=_{\rh[1]}$.


\subsection{\Sis}

Now suppose that $(G,{\leq_G})$~is an ordered monoid,\footnote{I.e.\ a
  monoid $(G,+,0)$ endowed with a (partial) order relation~$\leq_G$
  compatible with addition: $x\leq_G y\implies x+z\leq_G y+z$. We
  shall systematically omit the epithet ``partial''.} $(M,{\leq_M})$ a
\indtr-ordered monoid,\footnote{I.e.\ a monoid endowed with a \indtr
  law~$\vii$ inducing~$\leq_M$ and compatible with addition: the
  equality \( {x+{(y\vii z)}}={(x+y)}\vii{(x+z)} \) holds.} a
\emph{\mmonoid} for short, and $\mor\colon G\to M$ a morphism of
ordered monoids. The relation
\[
  a_1,\dots,a_k\rh b \enskip \equidef\enskip \mor(a_1)\vii\dots\vii\mor(a_k)\leq_M\mor(b)
\]
defines a \scentrel for~$G$ that satisfies furthermore the following
\properties:
\begin{alignat*}{2}
  \Rule{S3}&\text{if $a\leq_G b$, then }a\rh b&\text{\llap{(\presord);}}\\
  \Rule{S4}&\text{if $A\rh b$, then }x+A\rh x+b\quad(x\in G)&\qquad\text{(\equivariance).}
\end{alignat*}
\begin{definition}
  \label{defsysideals}
  An \emph{\si} for an ordered monoid~$G$ is a \scentrel~$\rh$ for~$G$
  satisfying \cref{S3,S4}.
\end{definition}
We propose to introduce \sis in a purely logical form, i.e.\ as
relations that require only a naive set theory for finitely enumerated
sets: this definition has been extracted from
\citealt[Definition~1]{Lor1939} \citep[compare][I, \S~3,
1]{Jaf1960}. One may also give them the form of predicates on~$\Pfs(G)$: see
\citealt[§~3]{coquandlombardineuwirthkonstanz}. The traditional form
of a meet-monoid for \sis may be recovered by \cref{ThSIJaf} below.

\begin{remarks}
  \phantomsection
  \label{remJafsi}
  \begin{asparaenum}
  \item\label{remJafsi1} We find that it is more natural to state a
    direct implication rather than an equivalence in \cref{S3}; we
    deviate here from Lorenzen and Paul \citealt[page~16]{Jaf1960}. The
    reverse implication expresses the supplementary property that the
    \si is order-reflecting.
  \item\label{remJafsi2} \citet{Lor1939}, following at first Richard
    \citet{dedekind97} and Heinz \citet[\S~2]{Pru1932} in
    subordinating algebra to set theory, is describing a (finite)
    ``$r$-system'' of ideals through a set-theoretic map
    \[
      \Pfs(G)\longrightarrow \rP(G),\quad A \longmapsto \sotq{x\in G}{A\rh x}\eqdef A_r
    \]
    (here $\rP(G)$ stands for the set of all subsets of~$G$, and $r$~is just a variable name for distinguishing different systems) that satisfies the following properties:
    \begin{alignat*}{2}
      \Rule{I1}&A_r\supseteq A\text{;}&\\
      \Rule{I2}&A_r\supseteq B\enskip\implies\enskip A_r\supseteq B_r\text{;}&\\
      \Rule{I3}&{\so a}_r= \sotq{x\in G}{a\leq_G x}&\quad\text{(preservation and reflection of order);}\\
      \Rule{I4}&{(x+A)_r}=x+A_r&\text{(\equivariance).}
    \end{alignat*}
    This map has been called $'$-operation by \citet[Nr.~43]{Kru35}
    and is called star-operation today.  Let us note that the
    containment $A_r\supseteq B_r$ corresponds to the inequality
    $ A\leq_{\rh} B$ for the preorder associated with the
    \scentrel $\rh$ by the definition~\cref{defpreorder} above.
    As previously indicated, in contradistinction to Lorenzen and
    Jaffard, we find it more natural to relax the equality
    in \cref{I3} to a containment: if we do so, the reader can prove
    that the definition of star-operation is equivalent to
    \cref{defsysideals}; \cref{I1,I2} correspond to the definition of
    a \scentrel,\footnote{They can also be read as a finite version of
      Tarski's consequence operation (see \cref{beziau}).} and
    \namecrefs{I3}~\ref{I3} (relaxed) and~\ref{I4} correspond to
    \cref{S3,S4} in \cref{defsysideals}; compare
    \citealt[pages~504--505]{Lor1950}.

\item In the set-theoretic framework of the previous item, the $r_2$-system is coarser than the $r_1$-system exactly if $A_{r_2}\supseteq A_{r_1}$ holds for all~$A\in\Pfs(G)$ (see \citealp[page~509]{Lor1950}, and \citealp[I, \S~3, Proposition~2]{Jaf1960}).\eoe
\end{asparaenum}
\end{remarks}
\begin{comment}
  Lorenzen unveils the lattice theory hiding behind multiplicative
  ideal theory step by step, the decisive one being dated back by him
  to 1940. In a footnote to his definition, \citet[page~536]{Lor1939}
  writes: ``If one understood hence by a \scentrel every lattice that
  contains the principal ideals and satisfies
  \namecref{I4}~[\ref{I4}], then this definition would be only
  unessentially more comprehensive'' (it seems that Lorenzen is
  lacking the concept of \emph{semi}lattice at this stage of his
  research). \citet[page~486]{Lor1950} emphasises the transparency of
  this presentation as compared to the set-theoretic ideals: ``But if
  one removes this set-theoretic clothing, then the concept of ideal
  may be defined quite simply: a \scentrel of a preordered set is
  nothing other than an embedding into a semilattice.'' \eoe
\end{comment}

\subsection{The \mmonoid generated by an \si}
\label{sec:meet-semil-order}

The effectiveness of \cref{defsysideals} is shown by the following straightforward theorem, which boils down to acknowledging that the meet operation of set union on the preordered meet-semilattice $({\Pfs(G)},{\leq_{\rh}})$, described in the proof of \cref{thSemiEntRelNB}, is compatible with the monoid operation of set addition $A+B$.

\begin{theorem}
  \label{ThSIJaf}
  Let \(\rh\) be an \si for an ordered monoid~\(G\). Let \(\slat\)~be
  the \indtr generated by the \scentrel~\(\rh\). Then there is a
  (unique) monoid law on~\(\slat\) which is compatible with its
  semilattice structure and such that the natural morphism (of ordered
  sets) \(G\to \slat\) is a monoid morphism. The resulting
  meet-monoid~\(\slat\) is called the \emph{monoid of ideals}
  associated with~\(\rh\).
\end{theorem}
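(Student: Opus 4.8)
The plan is to carry the evident monoid structure of $\Pfs(\set)$ down to the quotient $\slat$ of \cref{thSemiEntRelNB}, the only genuine work being to see that \equivariance lets set addition pass to that quotient. First I equip $\Pfs(\set)$ with the law of set addition
\[
  A+B\eqdef\sotq{a+b}{a\in A,\ b\in B}\text,
\]
which makes $(\Pfs(\set),{+},{\so 0})$ a commutative monoid.

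The crux is the compatibility of this law with the preorder $\leq_{\rh}$ of~\cref{defpreorder}: I claim that $A\leq_{\rh}B$ implies $A+C\leq_{\rh}B+C$ for every~$C$. Unfolding the definition, I must produce $A+C\rh b+c$ for each $b\in B$ and $c\in C$. From $A\rh b$, \equivariance~\cref{S4} yields $c+A\rh c+b$, and since $c+A\subseteq A+C$, monotonicity~\cref{S1} promotes this to $A+C\rh b+c$. Applying this in each argument (using commutativity of~$+$) and chaining along the transitivity of $\leq_{\rh}$ shows that $A\leq_{\rh}A'$ and $B\leq_{\rh}B'$ together give $A+B\leq_{\rh}A'+B'$; in particular $=_{\rh}$ is a congruence for set addition, so the law descends to a commutative monoid law on~$\slat$, determined by $(\Vii A)+(\Vii B)\eqdef\Vii(A+B)$.

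Compatibility with the semilattice structure is then purely formal. The set identity $A+(B\cup C)=(A+B)\cup(A+C)$ descends to the distributivity $x+(y\vii z)=(x+y)\vii(x+z)$ demanded of a \mmonoid, so $\slat$ is indeed a \mmonoid. The natural map $\set\to\slat$ sending~$a$ to its generator is a monoid morphism, since $\so a+\so b=\so{a+b}$ carries the addition of~$\set$ to that of~$\slat$ and $\so 0$ to the neutral element; its order-preservation is already granted by \cref{thSemiEntRelNB} via~\cref{S3}. For uniqueness, I use that $\slat$ is generated as a \indtr by the image of~$\set$: writing a general element as $\Vii A$ and distributing the meets, any law compatible with $\vii$ that extends the addition of~$\set$ is forced to satisfy $(\Vii A)+(\Vii B)=\Vii(A+B)$, which pins it down.

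I do not expect a serious obstacle: the sole substantive ingredient is the one-line use of \equivariance and monotonicity in the crux, and the remainder is the routine descent of a structure through a congruence.
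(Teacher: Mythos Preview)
Your proof is correct and follows essentially the same route as the paper: define set addition on $\Pfs(\set)$, use \equivariance together with monotonicity to show it respects the preorder~$\leq_{\rh}$, and observe that $A+(B\cup C)=(A+B)\cup(A+C)$ gives compatibility with~$\vii$. You are in fact more explicit than the paper in spelling out the monoid-morphism property of $\set\to\slat$ and the uniqueness argument via distributivity, which the paper leaves to the reader.
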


\begin{proof}\label{proofThSIJaf}
  We define $A+B=\sotq{a+b}{a\in A,\, b\in B}$ in $\Pfs(G)$. We have to
  check that this law descends to the quotient~$\slat$. It suffices to
  show that $ B\leq _{\rh} C$ implies
  $ A+B\leq _{\rh} A+C$. In fact, $ B\leq _{\rh} C$
  implies $ x+B\leq_{\rh} x+C$ by \equivariance, and
  ${ A+B}\leq_{\rh} x+C$ for every $x\in A$ by
  monotonicity. Finally, let us verify the compatibility of
  $\vii_{\slat}$ with addition: we note that already in $\Pfs(G)$, set
  union is compatible with set addition, i.e.\ ${A+(B,C)}={A+B,A+C}$.
\end{proof}

\subsection{The finest \si for an ordered monoid}\label{secSysId}

The finest \si admits the following description.

\begin{proposition}[{\citealp[Satz~14]{Lor1950}}]
  \label{comment-s-system1}
  Let \((G,\leq_G)\)~be an ordered monoid.  The finest \si for~\(G\)
  is defined by
  \[
    A\rh[\!\mathrm s]b\enskip\equidef\enskip a\leq_Gb\text{ for some
      \(a\in A\).}
  \]
  Note that \(\rh[\!\mathrm s]\)~is order-reflecting:
  \(a\rh[\!\mathrm s]b\iff a\leq_Gb\). The associated monoid of
  ideals is the meet-monoid freely generated by~\((G,\leq_G)\) (in the
  sense of the left adjoint functor of the forgetful functor).
\end{proposition}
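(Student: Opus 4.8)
The plan is to read off each requirement from the single defining clause \gui{$a\leq_G b$ for some $a\in A$}, using only the axioms of an ordered monoid. First I would check that $\rh[\!\mathrm s]$ is an \si. Reflexivity \cref{S0} is reflexivity of $\leq_G$; monotonicity \cref{S1} holds because a witness $a\in A$ stays a witness in any larger set $A,A'$; \presord \cref{S3} is the clause read on a singleton; and \equivariance \cref{S4} follows from compatibility of $\leq_G$ with addition, since a witness $a\leq_G b$ yields $x+a\leq_G x+b$ with $x+a\in x+A$. The only clause invoking transitivity of $\leq_G$ is \cref{S2}: if $A\rh[\!\mathrm s]c$ via some $a\leq_G c$ and $A,c\rh[\!\mathrm s]b$, then the latter either already exhibits a witness in $A$, or yields $c\leq_G b$, whence $a\leq_G b$. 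Reading the clause on a singleton also gives $a\rh[\!\mathrm s]b\iff a\leq_G b$, i.e.\ order-reflection.

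Next I would show $\rh[\!\mathrm s]$ is the finest \si. Let $\rh$ be any \si for~$G$ and suppose $A\rh[\!\mathrm s]b$, so $a\leq_G b$ for some $a\in A$. Then \presord \cref{S3} for~$\rh$ gives $a\rh b$, and monotonicity \cref{S1} lets me adjoin the remaining elements of~$A$ to get $A\rh b$. Hence $A\rh[\!\mathrm s]b$ implies $A\rh b$, which is exactly that $\rh[\!\mathrm s]$ is finer than~$\rh$.

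Finally, for the free-generation claim I would establish the universal property directly. Write $\slat$ for the monoid of ideals of $\rh[\!\mathrm s]$, a \mmonoid by \cref{ThSIJaf}, and $\iota\colon G\to\slat$, $a\mapsto$ the class of~$\{a\}$, for the natural morphism. Given a \mmonoid $M$ and a morphism of ordered monoids $\mor\colon G\to M$, I would define $\Psi\colon\slat\to M$ by sending the class of $A$ to $\Vii\mor(A)$, where $\mor(A)\eqdef\sotq{\mor(a)}{a\in A}$. Well-definedness is where finestness pays off: $A\leq_{\rh[\!\mathrm s]}B$ entails $A\rh[\!\mathrm s]b$ for each $b\in B$, hence $A\rh b$ for the \si attached to~$\mor$ before \cref{defsysideals}, i.e.\ $\Vii\mor(A)\leq_M\mor(b)$; running over $b\in B$ gives $\Vii\mor(A)\leq_M\Vii\mor(B)$, and symmetrising over $A=_{\rh[\!\mathrm s]}B$ gives equality. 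That $\Psi$ is a morphism of \mmonoids then comes from the two laws of~$\slat$ being descended set union and set addition: the former makes $\Psi$ respect~$\vii$ at once, and the latter makes it respect~$+$ by the distributive law $x+(y\vii z)=(x+y)\vii(x+z)$ of~$M$ applied repeatedly. Since $\Psi(\iota(a))=\mor(a)$, $\Psi$ extends~$\mor$, and it is the unique such \mmonoid morphism because the class of $A=\{a_1,\dots,a_k\}$ equals $\iota(a_1)\vii_\slat\dots\vii_\slat\iota(a_k)$, forcing the formula for~$\Psi$. I expect this last step to be the only real obstacle — pinning down well-definedness and compatibility with addition, where the distributive law of a \mmonoid is indispensable — while \cref{S0,S1,S2,S3,S4} and finestness are immediate.
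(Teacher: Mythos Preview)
Your proposal is correct and complete. The paper's own proof is simply \gui{Left to the reader}, so there is no approach to compare against; you have supplied precisely the routine verification the authors declined to spell out, and your use of finestness to establish well-definedness of~$\Psi$ (by factoring through the \si attached to~$\mor$) is the natural and expected argument.
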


\begin{proof}
  Left to the reader.
\end{proof}

\subsection{The system of Dedekind ideals}
\label{sec:fore-integr-doma}

Lorenzen's goal is to unveil the constructive content of Krull's
\foreignlanguage{german}{Fundamentalsatz}, i.e.\ to express it without
reference to valuations. In order to do so, consider an integral
domain~\(\id\) and its divisibility group~\(G=K\eti/\id\eti\) ordered
by divisibility, where \(K\) is the field of fractions of~\(\id\). A
valuation is a linear preorder~$\lt$ on~$G$ such that $1\lt x$
for~$x\in\id\etl$ and
\begin{equation}
  \label{valuation}
  \min(a_1,a_2)\lt a_1+a_2\quad\text{if $a_1+a_2\neq0$.}
\end{equation}
Property~\cref{valuation} implies that
$\min(a_1,\dots,a_k)\lt x_1a_1+\dots+x_ka_k$ if
$x_1a_1+\dots+x_ka_k\neq0$, where $x_1,\dots,x_k\in\id$. Let us write
$\gen{A}_\id$ for these linear combinations, where
$A=\{a_1,\dots,a_k\}$: we have $\gen A_\id\ni b\implies\min A\lt
b$. This motivates the following definition and observation.
\begin{definition}
  \label{dedekind}
  Let \(\id\) be an integral domain, \(K\) its field of fractions, and
  \(G=K\eti/\id\eti\) its divisibility group ordered by
  divisibility. The \emph{system of Dedekind ideals} for~\(G\) is
  defined by
  \[A\rh[\mathrm{d}]b\enskip\equidef\enskip \gen{A}_\id\ni b \text,\]
  where \(\gen{A}_\id\) is the fractional ideal generated by~\(A\)
  over~\(\id\) in~\(K\): if $ a_1,\dots,a_k$ are the elements of~$A$,
  then $\gen{A}_\id= \id\,a_1 + \cdots + \id\,a_k$.
\end{definition}

\begin{proposition}
  The system of Dedekind ideals for the divisibility group~\(G\) of an
  integral domain is an \si for~\(G\).
\end{proposition}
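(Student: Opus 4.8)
The plan is to verify that $\rh[\mathrm d]$ is a \scentrel satisfying \cref{S3,S4}, i.e.\ that it meets all five defining \properties \cref{S0,S1,S2,S3,S4} of an \si. The one preliminary point is well-definedness on the divisibility group: writing the law of $G=K\eti/\id\eti$ additively on classes but multiplicatively on representatives in~$K\eti$, I would note that $\gen A_\id=\id\,a_1+\dots+\id\,a_k$ depends only on the classes of the~$a_i$ modulo~$\id\eti$, since $\id\,(ua)=\id\,a$ for every unit~$u$, and that the membership $\gen A_\id\ni b$ likewise depends only on the class of~$b$; thus $A\rh[\mathrm d]b$ is a genuine relation between $\Pfs(G)$ and~$G$. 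Equivalently, one could verify that the map $A\mapsto\gen A_\id$, i.e.\ the finitely generated fractional ideal closure, is a star-operation in the sense of \cref{I1,I2,I3,I4} of \cref{remJafsi}.

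For the three \scentrel axioms, the organising observation is that $A\mapsto\gen A_\id$ is a finitary closure operator on subsets of~$K$. \Cref{S0} holds because $a=1\cdot a\in\id\,a=\gen a_\id$. \Cref{S1} holds because adjoining generators enlarges the ideal, $\gen A_\id\subseteq\gen{A,A'}_\id$, so $b\in\gen A_\id$ gives $b\in\gen{A,A'}_\id$. The cut rule \cref{S2} rests on the identity $\gen{A,c}_\id=\gen A_\id+\id\,c$: if $c\in\gen A_\id$ then $\id\,c\subseteq\gen A_\id$, whence $\gen{A,c}_\id=\gen A_\id$, so that $b\in\gen{A,c}_\id$ already yields $b\in\gen A_\id$. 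This last step is the conceptual core of the statement — it is precisely the fact that ideal membership is a closure operation, which is what turns the relation into an \scentrel rather than an arbitrary relation.

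It remains to treat the two order-theoretic \properties. For \cref{S3} (\presord) I would unwind the divisibility order on representatives: $a\leq_G b$ means $b/a\in\id$, i.e.\ $b\in\id\,a=\gen a_\id$, which is exactly $a\rh[\mathrm d]b$ (so $\rh[\mathrm d]$ is in fact order-reflecting on singletons, though only the stated implication is needed). For \cref{S4} (\equivariance) I would use that multiplication by a representative of~$x$ is an $\id$-module automorphism of~$K$, so that $\gen{x+A}_\id=\id\,(xa_1)+\dots+\id\,(xa_k)=x\,\gen A_\id$; then $b\in\gen A_\id$ immediately gives $x+b=xb\in x\,\gen A_\id=\gen{x+A}_\id$, that is $x+A\rh[\mathrm d]x+b$.

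I do not anticipate a genuine obstacle here: each of the five verifications reduces to a one-line computation with finitely generated $\id$-submodules of~$K$. The only places that demand care are bookkeeping: confirming independence of the chosen representatives modulo~$\id\eti$, and fixing the direction of the divisibility order in \cref{S3} so that $a\leq_G b$ corresponds to $a\mid b$, hence to $b\in\id\,a$, and not to the reverse containment.
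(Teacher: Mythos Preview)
Your verification is correct. The paper itself states this proposition without proof, treating it as a routine check; your write-up carries out precisely the expected direct verification of axioms \cref{S0,S1,S2,S3,S4} via the standard properties of finitely generated fractional ideals (closure under sum, absorption of a generator already contained, and compatibility with multiplication by a nonzero scalar), together with the bookkeeping that everything is well-defined modulo~$\id\eti$. There is nothing to compare against beyond this, and no gap in what you wrote.
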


The above argument shows that a valuation may be defined as a linear
preorder that is coarser than the system of Dedekind ideals, so that
it gives rise to a homomorphism from the preordered meet-monoid of
Dedekind ideals into a linearly preordered group. In a letter to Krull
dated 6 June 1944,\footnote{Philosophisches Archiv, Universität
  Konstanz, PL~1-1-133, published in \citealt[§~R]{neuwirthkonstanz}.}
Lorenzen writes: ``If e.g.\ I replace the concept of valuation by
`homomorphism of a semilattice into a linearly preordered set', then I
see therein a conceptual simplification and not a complication. For
the introduction of the concept of valuation (e.g.\ the at first
arbitrary triangular inequality [\cref{valuation}]) is only justified
by the subsequent success, whereas the concept of homomorphism bears
its justification in itself. I would say that the homomorphism into a
linear preorder is the `pure concept' that underlies the concept of
valuation.''

\subsection{Forcing the positivity of an element}
\label{sec:forc-posit-an}

\begin{definition}
  Let $\rh$~be an \si for an ordered monoid~$G$ and $x\in G$. The
  system~$\rh[\!x]$ is the \si coarser than $\rh$ obtained by forcing
  the \property~$0\rh x$.
\end{definition}
The precise description of $\rh[\!x]$ given in the
\lcnamecref{lemrhgamma} below is the counterpart for a \scentrel to
the submonoid generated by adding an element~$x$ to a submonoid in an
ordered monoid (the ``$r$-extension'' $\dot{\mathfrak g}(x)_r$ of the
submonoid~$\dot{\mathfrak g}$, \citealp[page~516]{Lor1950}).

\begin{proposition}
  \label{lemrhgamma}
  Let \(\rh\)~be an \si for an ordered monoid~\(G\) and \(x\in G\).
  We have the equivalence
  \[A\rh[\!x]b\enskip\iff\enskip
    \text{there is \(p\geq 0\) such that
      \(A,A+x,\dots,A+px \rh b\).}
  \]
\end{proposition}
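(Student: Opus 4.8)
The plan is to establish the stated equivalence by identifying the right-hand side with the universal object that $\rh[\!x]$ is by definition. Write $\rh'$ for the relation defined by ``$A\rh'b$ iff $A,A+x,\dots,A+px\rh b$ for some $p\geq0$''. I would show that $\rh'$ is an \si, that it is coarser than $\rh$, that it satisfies $0\rh'x$, and finally that it is the \emph{finest} \si coarser than $\rh$ with this last property. Since $\rh[\!x]$ is precisely that finest system, these four points give $\rh'=\rh[\!x]$, which is the asserted equivalence.

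The first three points, together with three of the five axioms, are routine. Reflexivity \cref{S0} and \presord \cref{S3} hold with witness $p=0$; monotonicity \cref{S1} and \equivariance \cref{S4} hold with the witness $p$ unchanged, applying the corresponding axiom of $\rh$ to the whole union $A,A+x,\dots,A+px$ and using $y+(A+kx)=(y+A)+kx$ for \equivariance. That $\rh'$ is coarser than $\rh$ is again the case $p=0$. Finally $0\rh'x$ is witnessed by $p=1$: from $x\rh x$ (by \cref{S0}) monotonicity gives $0,x\rh x$, i.e.\ $0,0+x\rh x$.

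The crux is transitivity \cref{S2} for $\rh'$. Assume $A,\dots,A+px\rh c$ and $(A,c),\dots,(A,c)+qx\rh b$; I claim $A,\dots,A+(p+q)x\rh b$, i.e.\ $U\rh b$ with $U=\bigcup_{k=0}^{p+q}(A+kx)$. Applying \equivariance to the first hypothesis shifted by $jx$ yields $\bigcup_{k=j}^{p+j}(A+kx)\rh c+jx$, and since $p+j\leq p+q$ for $0\leq j\leq q$, monotonicity enlarges the left side to give $U\rh c+jx$ for every such $j$. The second hypothesis, enlarged by monotonicity, reads $U\cup\{c,c+x,\dots,c+qx\}\rh b$. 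Now I cut the elements $c+qx,\dots,c+x,c$ out successively: at each stage $U$ already entails the element to be removed, so monotonicity puts that entailment in the required form and a single application of \cref{S2} deletes the element while keeping $U$ on the left; after $q+1$ cuts one is left with exactly $U\rh b$, that is $A\rh'b$.

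For minimality, let $\rh''$ be any \si coarser than $\rh$ satisfying $0\rh''x$, and suppose $A\rh'b$, say $A,\dots,A+px\rh b$; coarseness gives $A,\dots,A+px\rh''b$. The key auxiliary fact is $0\rh''kx$ for all $k\geq0$, proved by induction: \equivariance turns $0\rh''x$ into $kx\rh''(k+1)x$, and cutting $kx$ against $0\rh''kx$ yields $0\rh''(k+1)x$. \Equivariance then gives $a\rh''a+kx$, whence $A\rh''a+kx$ by monotonicity, for every $a\in A$ and $0\leq k\leq p$; thus $A$ entails under $\rh''$ every element of the set $A,\dots,A+px$. The generalised cut that removes a whole set at once (exactly as in the proof of \cref{thSemiEntRelNB}) then collapses $A,\dots,A+px\rh''b$ to $A\rh''b$, so $\rh'$ is finer than $\rh''$. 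I expect the transitivity step to be the main obstacle: the bookkeeping of the \equivariance-shifts and of the successive single cuts must be arranged so that the common left-hand side $U$ remains fixed throughout, which is what makes the repeated appeal to \cref{S2} legitimate.
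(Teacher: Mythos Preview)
Your argument is correct and follows essentially the same route as the paper: you define the candidate relation~$\rh'$, verify the \si axioms with transitivity as the only nontrivial step, and establish minimality; your transitivity proof with witness $p+q$ via \equivariance-shifts and successive cuts is exactly the paper's argument written out in general (the paper illustrates it on the example $p=3$, $q=2$). The one cosmetic difference is that for minimality the paper invokes the associated \mmonoid (where $0\leq x$ forces $\Vii(A,A+x,\dots,A+px)=\Vii A$) whereas you argue directly from the axioms via $0\rh''kx$, but this is the same computation unpacked.
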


Unlike the case of regular entailment relations (see \cref{A+px}), it
is not possible to omit $A+x,\dots,A+(p-1)x$ and to keep only $A,A+px$
to the left of~$\rh$: this can be seen in the
equivalence~\cref{lemrhgammadedekind} on
\cpageref{lemrhgammadedekind}, which is the application
of~\cref{lemrhgamma} to Dedekind ideals; see \citealt[Examples~8.1 and~8.2]{coquandlombardineuwirthkonstanz}.

\begin{proof}
  Let us denote by $A\rh'b$ the right-hand side in the equivalence
  above. In any \mmonoid, $0\leq x$ implies
  $\Vii(A,A+x,\dots,A+px) = \Vii A$, so that $A\rh'b$ implies
  $A\mathrel{\widetilde{\rh}}b$ for any \si $\widetilde\rh$ coarser
  than $\rh$ and satisfying $0\mathrel{\widetilde{\rh}}x$.

  It remains to prove that $A\rh'b$ defines an \si for $G$ (clearly
  $0\rh' x$ and $\rh'$ is coarser than $\rh$).  Reflexivity, \presord,
  \equivariance, and monotonicity are straightforward.  It remains to
  prove transitivity.  Assume that $A\rh' c$ and $A,c\rh' b$. We have
  to show that $A\rh' b$.  E.g.\ we have
  \begin{align}
    A,A+x,A+2x,A+3x &\rh c\text,\label{eqstarlemrhgamma}\\
    A,A+x,A+2x,c,c+x,c+2x &\rh b\text.\label{eqdaglemrhgamma}
  \end{align}
  \cref{eqstarlemrhgamma} gives by equivariance
  $A+2x,A+3x,A+4x,A+5x \rh c+2x$. By a cut with \cref{eqdaglemrhgamma}
  we may cancel out $c+2x$ and get
  \begin{equation}
    A,A+x,A+2x,A+3x,A+4x,A+5x,c,c+x \rh b.\label{eqdagdaglemrhgamma}
  \end{equation}
  The same argument allows us to cancel successively $c+x$ and $c$ out
  of~\cref{eqdagdaglemrhgamma}.
\end{proof}

\subsection{Forcing an element to be positive w.r.t.\ the system of
  Dedekind ideals}\label{sec:forc-posit-an-1}

\begin{proposition}
  \label{forcing-dedekind}
  Let \(\id\) be an integral domain, \(K\) its field of fractions and
  \(G=K\eti/\id\eti\) its divisibility group. Let \(x\in G\). Then the
  system~\(\mathrel{{(\rhd_{\mathrm{d}})}_{x}}\) obtained from the
  system of Dedekind ideals~\(\rh[\mathrm{d}]\) for~\(G\) by forcing
  \(1\rh[\mathrm{d}]x\) is the system of Dedekind ideals for the
  divisibility group of the extension~\(\id[x]\) of~\(R\) by a
  representative of~\(x\) in~\(K\).
\end{proposition}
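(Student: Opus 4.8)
The plan is to reduce the statement to an explicit computation of fractional ideals, using the description of forcing in \cref{lemrhgamma} to unwind the left-hand side and \cref{dedekind} to unwind the right-hand side. Throughout I fix a representative $\xi\in K\eti$ of~$x$, representatives $\alpha_1,\dots,\alpha_k\in K\eti$ of the elements of~$A$, and a representative $\beta$ of~$b$; every $\id$-submodule of~$K$ occurring below is independent of these choices, since changing a representative multiplies it by a unit of~$\id$ and leaves the generated module unchanged, so that all the relations considered are well defined on~$G$.

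First I would translate the left-hand side. The group law of $G=K\eti/\id\eti$ is the multiplication of~$K\eti$, so the additive increment $A+jx$ of \cref{lemrhgamma} is here the set $\sotq{x^{j}a}{a\in A}$, with representatives $\xi^{j}\alpha_i$. Writing $\id_p=\id+\id\xi+\dots+\id\xi^{p}$, \cref{dedekind,lemrhgamma} then give
\[
  A\mathrel{(\rhd_{\mathrm d})_x}b \iff \text{there is }p\ge0\text{ with }\beta\in \id_p\,\gen{A}_\id\text,
\]
because the fractional ideal generated over~$\id$ by all the $\xi^{j}\alpha_i$ with $0\le j\le p$ and $1\le i\le k$ is exactly $\id_p\,\gen{A}_\id$.

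Next I would let $p$ grow. The ideals $\id_p\,\gen{A}_\id$ are nested, and $\id[\xi]=\bigcup_p\id_p$, so their union is $\id[\xi]\,\gen{A}_\id=\gen{A}_{\id[\xi]}$, the fractional ideal generated by~$A$ over~$\id[\xi]$ in~$K$. Hence $\beta\in\id_p\,\gen{A}_\id$ for some~$p$ if and only if $\beta\in\gen{A}_{\id[\xi]}$. Since $K$ is also the field of fractions of~$\id[\xi]$, this last condition is, by \cref{dedekind} applied to~$\id[\xi]$, precisely the Dedekind entailment of~$A$ and~$b$ for the divisibility group of~$\id[\xi]$.

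The one point that needs care — and which I would flag explicitly — is that the two systems a priori live on different groups: $(\rhd_{\mathrm d})_x$ is a relation on $G=K\eti/\id\eti$, whereas the divisibility group of~$\id[\xi]$ is the coarser quotient $K\eti/\id[\xi]\eti$. This is not a genuine obstacle: the condition $\beta\in\gen{A}_{\id[\xi]}$ is invariant under multiplying any representative by a unit of~$\id[\xi]$, hence descends along the natural surjection $G\twoheadrightarrow K\eti/\id[\xi]\eti$, and under this identification (compare \cref{dmn}, where the order induced by a coarser system is read on the appropriate quotient) the relation $(\rhd_{\mathrm d})_x$ on~$G$ coincides with the Dedekind system on $K\eti/\id[\xi]\eti$. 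The essential content is therefore just the telescoping identity $\bigcup_p\id_p\,\gen{A}_\id=\gen{A}_{\id[\xi]}$; the rest is bookkeeping of representatives.
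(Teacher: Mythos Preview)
Your proof is correct and follows essentially the same route as the paper: apply \cref{lemrhgamma} to expand the forcing, translate to multiplicative notation, and identify the union of the resulting fractional ideals as $\gen{A}_{\id[\xi]}$. You are more explicit than the paper about representatives and about the passage between the divisibility groups $K\eti/\id\eti$ and $K\eti/\id[\xi]\eti$, a point the paper leaves implicit.
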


\begin{proof}
  Forcing $1\rh[\mathrm{d}]x$ for an $x\in G$ amounts to
  replacing~$\id$ by~$\id[x]$ since \cref{lemrhgamma} tells that the
  resulting \si satisfies
  \begin{equation}
    A \mathrel{{(\rhd_{\mathrm{d}})}_{x}}b\enskip \iff\enskip
    \begin{aligned}[t]
      &\text{there is $p\geq 0$ such that}\\
      &A,Ax,\dots, Ax^p \rh[\mathrm{d}] b\text{ holds,}
    \end{aligned}
    \label{lemrhgammadedekind}
  \end{equation}
  which means that $ \gen A_{\id[x]}\ni b $.
\end{proof}

This is explained in \citealt[§~3]{Lor1953}, and has suggested \cref{lemrhgamma} to us.

\section{Lattice-ordered groups and regular \entrels}
\label{sec:grls-regular-entrels}

\subsection{\Trdis and \entrels}
\label{sec:trdis-entrels}

Let us define a \emph{\trdi} as a purely equational algebraic
structure with two laws $\vii$~and~$\vuu$ satisfying the axioms of
\trdis; we are leaving out the two axioms providing a greatest and a
least element.

For a \trdi~$\lat$, let us denote by $A\vda B$ the relation defined on
the set~$\Pfs(\lat)$ in the following way
\citep[see][Satz~5]{Lor1951}:
\[
  A \vda B \enskip \equidef \enskip \ndsp\Vii A\leq_\lat \Vuu B\text.
\]
This relation is reflexive, monotone, and transitive in the following
sense, expressed without the laws $\vii$~and~$\vuu$:
\begin{alignat*}{3}
  \Rule{R0} && a  &\vda a    &\text{(reflexivity);}     \\
  \Rule{R1} &\text{if $A \vda B$, then }& A,A' &\vda B,B'&\qquad\text{(monotonicity);}\\
  \Rule{R2} \text{if $A \vda B,c$ and }&A,c \vda B\text{, then }& A &\vda B &\text{(transitivity);}
\end{alignat*}
we insist on the fact that $A$~and~$B$ must be nonempty.

Note the following banal generalisation of cut, using monotonicity: if
\(A\vda B',x\) and \(A',x\vda B\), with \(A'\) and \(B'\) possibly
empty, then \(A,A'\vda B,B'\).

The following definition is a slight variant of a notion whose name
has been coined by Dana \citet[page~417]{Sco1974}. It is introduced as
description of a distributive lattice (see \cref{thEntRelNB}) in
\citealt[\S~2]{Lor1951}.

\begin{definition}
  \phantomsection\label{defEntrelbis}
  Let $\set$~be an arbitrary set.
  \begin{asparaenum}
  \item A binary relation~$\vda$ on $\Pfs(\set)$ which is reflexive,
    monotone, and transitive is called an \emph{unbounded entailment
      relation}.
  \item The unbounded entailment relation~$\vda[2]$ is \emph{coarser}
    than the unbounded entailment relation~$\vda[1]$ if $A\vda[1]B$
    implies $A\vda[2]B$. One says also that $\vda[1]$ is \emph{finer}
    than~$\vda[2]$.
  \end{asparaenum}
\end{definition}

\Cref{multiset} applies again verbatim for \cref{defEntrelbis}.

\subsection{Fundamental theorem of unbounded entailment relations}

The counterpart to \cref{thSemiEntRelNB} for unbounded \entrels is
\cref{thEntRelNB}, a slight variant of the fundamental theorem of
entailment relations \citep[Theorem~1, obtained
independently]{CeCo2000}, which may in fact be traced back to
\citet[Satz~7]{Lor1951}.  It states that an unbounded \entrel for a
set~$\set$ generates a \trdi $\lat$ whose order reflects the relation.
The proof is the same as in \citealt{CeCo2000} or in \citealt[Theorem
XI-5.3]{CACM}.

\begin{theorem}[fundamental theorem of unbounded entailment relations,
  see
  {\citealp[Satz~7]{Lor1951}}]\hspace{-.5em}\footnote{\Cref{lorenzenlack}
    applies verbatim. Lorenzen's Satz~7 yields directly that if for
    every distributive lattice~$L$ and every $f\colon G\to L$ with
    $X\vda Y\implies \Vii f(X)\leq_L\Vuu f(Y)$ one has
    $\Vii f(A)\leq_L\Vuu f(B)$, then $A\vda B$. This may be considered
    as a result of completeness for the semantics of distributive
    lattices.}
  \label{thEntRelNB}
  Let \(\set\)~be a set and \(\vda\)~an unbounded entailment relation
  on~\(\Pfs(\set)\). Let us consider the \trdi~\(\lat\) defined by
  generators and relations in the following way: the generators are
  the elements of~\(\set\) and the relations are the
  \[
    \text{\(\Vii A\leq_\lat\Vuu B\) whenever \(A\vda B\).}
  \]
  Then, for all~\(A\), \(B\) in~\(\Pfs(\set)\), we have the reflection of entailment
  \[
    \text{if \(\Vii A\leq_\lat\Vuu B\), then \(A\vda B\).}
  \]
\end{theorem}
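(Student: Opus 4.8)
The forward implication needs no work: by the very construction of $\lat$ by generators and relations, $A\vda B$ forces $\Vii A\leq_\lat\Vuu B$. The whole content is the reflection of entailment, and the plan is to obtain it by exhibiting $\lat$ concretely, in the same spirit as the proof of \cref{thSemiEntRelNB} but now carrying both operations. Since every element of the \trdi generated by $\set$ can be put in disjunctive normal form $\Vuu_{A\in\mathcal A}\Vii A$, I would represent candidate elements by members $\mathcal A$ of $\Pfs(\Pfs(\set))$ and read off a preorder on $\Pfs(\Pfs(\set))$ directly from $\vda$.

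Concretely, for $\mathcal A,\mathcal B\in\Pfs(\Pfs(\set))$ I would set $\mathcal A\sqsubseteq\mathcal B$ to mean: for every $A\in\mathcal A$ and every choice function $c$ selecting one element $c(B)\in B$ from each $B\in\mathcal B$, one has $A\vda\sotq{c(B)}{B\in\mathcal B}$. This is the syntactic transcription, through distributivity, of $\Vuu_{A\in\mathcal A}\Vii A\leq\Vuu_{B\in\mathcal B}\Vii B$, obtained by rewriting the right-hand join of meets as a meet of joins ranging over all choice functions. I would then take $\lat$ to be the quotient of $(\Pfs(\Pfs(\set)),{\sqsubseteq})$ by the associated equivalence, with join given by set union $\mathcal A\cup\mathcal B$ and meet by $\sotq{A\cup A'}{A\in\mathcal A,\ A'\in\mathcal B}$, and with natural map $\set\to\lat$ sending $g$ to the class of $\so{\so g}$.

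The verification then splits into routine and non-routine parts. Reflexivity of $\sqsubseteq$ and its monotonicity come straight from \cref{R0,R1}; that union and pairwise union descend to the quotient and satisfy the lattice and distributivity axioms is a bookkeeping exercise on finite sets; and a universal-property check identifies the quotient with the \trdi defined by generators and relations, so that $\sqsubseteq$ computes $\leq_\lat$. The one delicate step, which I expect to be the main obstacle, is transitivity of $\sqsubseteq$. Given $\mathcal A\sqsubseteq\mathcal B$ and $\mathcal B\sqsubseteq\mathcal C$, one fixes $A\in\mathcal A$ and a choice function $d$ on $\mathcal C$, sets $D=\sotq{d(C)}{C\in\mathcal C}$, and must produce $A\vda D$. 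Writing $\mathcal B=\so{B_1,\dots,B_m}$, one has $A\vda\so{b_1,\dots,b_m}$ for every selection $b_i\in B_i$ (from $\mathcal A\sqsubseteq\mathcal B$) together with $B_i\vda D$ for each $i$ (from $\mathcal B\sqsubseteq\mathcal C$). I would eliminate the members $B_i$ one at a time, carrying along the growing right-hand side, by repeated use of cut \cref{R2} combined with monotonicity, exactly in the style of the generalised cut recorded just before \cref{defEntrelbis}: this is the point where all three axioms conspire, and it is the two-sided analogue of the repeated-cut argument used for $\leq_{\rh}$ in \cref{thSemiEntRelNB}.

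Finally the reflection drops out by specialisation. Taking $\mathcal A=\so A$, which represents $\Vii A$, and $\mathcal B=\sotq{\so b}{b\in B}$, which represents $\Vuu B$, the only choice function on $\mathcal B$ has image $B$, so $\mathcal A\sqsubseteq\mathcal B$ unwinds to precisely $A\vda B$; since $\sqsubseteq$ computes $\leq_\lat$, this says exactly that $\Vii A\leq_\lat\Vuu B$ implies $A\vda B$. I would deliberately avoid the alternative route through prime filters and the two-element lattice hinted at in the footnote, as it rests on a completeness argument that is not constructive; the normal-form construction above keeps everything algorithmic, in keeping with the Bishop-style standpoint of the article.
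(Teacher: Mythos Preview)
Your proposal is correct and follows essentially the same route as the proofs the paper defers to: the paper gives no argument of its own here but refers to \citet{CeCo2000} and \citet[Theorem~XI-5.3]{CACM}, and both of these build~$\lat$ exactly as you do, via disjunctive normal forms in $\Pfs(\Pfs(\set))$ with the preorder read off from~$\vda$ through choice functions, the crux being the transitivity of~$\sqsubseteq$ obtained by iterated cuts. Your identification of that step as the delicate one, and your avoidance of the prime-filter route in favour of an algorithmic construction, are both in keeping with those references and with the constructive standpoint of the article.
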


\Cref{dmn} applies again mutatis mutandis.

\subsection{Regular \entrels}

Let $(G,\leq_G)$~be an ordered monoid, $(H,{\leq_H})$~a distributive
lattice-ordered monoid,\footnote{I.e.\ a meet-monoid endowed with a
  join-semilattice law~$\vuu$ inducing~$\leq_H$ that is distributive
  over~$\vii$ and compatible with addition.} and $\mor\colon G\to H$ a
morphism of ordered monoids. The laws $\vii$~and~$\vuu$ on~$H$ provide
a \trdi structure, and the relation
\[
  a_1,\dots,a_k\vda b_1,\dots,b_\ell \enskip \equidef\enskip \mor(a_1)\vii\dots\vii\mor(a_k)\leq_H\mor(b_1)\vuu\dots\vuu\mor(b_\ell)
\]
defines an unbounded entailment relation for~$G$ that satisfies
furthermore the following straightforward \properties:
\begin{alignat*}{2}
  \Rule{R3}\quad&\text{if $a\leq_G b$, then }a\vda b&\text{\llap{(\presord);}}\\
  \Rule{R4}\quad&\text{if $A\vda B$, then }x+A\vda x+B\quad(x\in G)&\qquad\text{(\equivariance).}
\end{alignat*}

Now suppose that $(H,{\leq_H})$ is a lattice-ordered
group,\footnote{An \emph{ordered group} is a group that is an ordered
  monoid. If it is meet-semilattice-ordered, then it turns out that it
  is a \emph{lattice-ordered group} with join defined by
  $a\vuu b=-(-a\vii-b)$.} an \emph{\grl} for short. Then the following
further \property holds:
\begin{alignat*}{2}
  \Rule{R5}\quad&x+a,y+b\vda y+a,x+b&\qquad\text{(regularity).}
\end{alignat*}
This follows from the observation that if $x',a',y',b'$ are elements
of~$H$, then the difference of right-hand side and left-hand side of
\begin{equation}
  {(x'+a')\vii(y'+b')}\leq_H{(y'+a')\vuu(x'+b')}\label[inequality]{reginlg}
\end{equation}
is
\[
  \begin{multlined}
    \bigl((y'+a')\vuu(x'+b')\bigr)+\bigl((-x'-a')\vuu(-y'-b')\bigr)\\
    \begin{aligned}
      &=_H(y'-x')\vuu(a'-b')\vuu(b'-a')\vuu(x'-y')\\\label{reg-lgroup}
      &=_H|y'-x'|\vuu|b'-a'|\text.
    \end{aligned}
  \end{multlined}
\]
We assemble these observations into the following new purely logical
definitions \citep[compare][\S~1]{Lor1953}, given for ordered monoids
even though we study them only in the case of ordered groups.
\begin{definition}
  \label{defregsysideals}
  Let $G$~be an ordered monoid.
  \begin{asparaenum}
  \item\label{defregsysideals-equivariant} An \emph{\equivariant}
    \entrel for~$G$ is an unbounded entailment relation~$\vda$ for~$G$
    satisfying \cref{R3,R4}.
  \item A \emph{regular entailment relation} for~$G$ is an
    \equivariant \entrel for~$G$ satisfying \cref{R5}.
  \item An \si for~$G$ is \emph{regular} if it is the restriction of a
    regular \entrel to~$\Pfs(G)\times G$.
  \end{asparaenum}
\end{definition}
We prefer the terminology in \cref{defregsysideals-equivariant} to
Lorenzen's vocable ``upper system of ideals''; note that a
fundamental theorem is also available for this concept, but we shall
not need it. A key fact to be established is that a regular entailment
relation is determined by its restriction to~$\Pfs(G)\times G$ (see
\cref{ABA-B} of \cref{cor+x-x}). This allows one to give it the form of a predicate on~$\Pfs(G)$: see
\citealt[§~2]{coquandlombardineuwirthkonstanz}. 

\begin{comment}
  Lorenzen discovers the property of regularity in his analysis of the
  case of noncommutative groups: he isolates
  \cref{reginlg}, which is trivially verified in a
  commutative \grl, but not in a noncommutative
  one. \citet[Satz~13]{Lor1950} proves by a well-ordering argument
  that a (noncommutative) preordered $\ell$-group satisfying \cref{reginlg} is a subdirect product
  of linearly preordered groups. In the commutative setting, this
  corresponds to the theorem (in classical mathematics) that
  any commutative preordered $\ell$-group is a subdirect product of linearly
  preordered commutative groups.\eoe
\end{comment}

\subsection{Regularity as the right to assume elements linearly
  ordered}

Let us now undertake an investigation of regular entailment relations
as defined in \cref{defregsysideals}.

\begin{lemma}\label{lem1thGOEntrelGRL}
  Let \(\vda\)~be an unbounded entailment relation for an ordered
  group~\(G\). \Cref{R5} may be restated as follows:
  \[
    \text{if \(x_1+x_2=_G y_1+y_2\), then \(x_1,x_2\vda y_1,y_2\).}
  \]
\end{lemma}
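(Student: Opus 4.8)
The plan is to prove the two implications separately, each by a direct instantiation of a universally quantified schema; the entire content is that, in a group, the single equation $x_1+x_2=_Gy_1+y_2$ describes \emph{exactly} the quadruples that occur in \cref{R5}. Indeed, the two terms on the left of \cref{R5} and the two on its right both sum to $x+y+a+b$, so the sum-equality is a necessary feature of every instance of \cref{R5}, and the work is to see that it is also sufficient to recover one.

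First I would treat the implication from the restated \property to \cref{R5}, which needs only the monoid law. Given arbitrary $x,y,a,b\in G$, I set $x_1=x+a$, $x_2=y+b$, $y_1=y+a$, $y_2=x+b$; then $x_1+x_2=_Gx+y+a+b=_Gy_1+y_2$, so the hypothesis yields $x_1,x_2\vda y_1,y_2$, which is precisely $x+a,y+b\vda y+a,x+b$.

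For the converse I would start from $x_1,x_2,y_1,y_2$ with $x_1+x_2=_Gy_1+y_2$ and solve the system $x+a=x_1$, $y+b=x_2$, $y+a=y_1$, $x+b=y_2$ for $x,y,a,b$. Subtracting the third equation from the first forces $x-y=_Gx_1-y_1$, while subtracting the second from the fourth forces $x-y=_Gy_2-x_2$; these are consistent exactly because $x_1+x_2=_Gy_1+y_2$. A convenient solution is then $y=0$, $x=x_1-y_1$, $a=y_1$, and $b=x_2$, for which $x+b=(x_1-y_1)+x_2=_Gy_2$ is the one identity that consumes the hypothesis. With this choice \cref{R5} reads $x_1,x_2\vda y_1,y_2$, as wanted.

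I expect no real obstacle here: the argument invokes none of reflexivity, monotonicity, or transitivity, and the only genuine ingredient is the solvability of the linear change of variables, which is why $G$ must be a group rather than merely a monoid—subtraction is needed to invert the passage from $(x,y,a,b)$ to the four sums. The single point to watch will be to exhibit a valid solution of the four equations; I would also record that the symmetric restated form is usually the more convenient one to apply in later arguments.
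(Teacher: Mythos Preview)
Your proposal is correct and follows essentially the same approach as the paper: both directions are handled by direct instantiation, and for the non-trivial implication (from \cref{R5} to the restated form) your choice $y=0$, $x=x_1-y_1$, $a=y_1$, $b=x_2$ is the same substitution the paper uses (written there as $y_1+(y_2-x_2),x_2\vda y_1,x_2+(y_2-x_2)$, noting $x_1-y_1=_Gy_2-x_2$). The paper simply omits the trivial converse and the discussion of solving the linear system, but the mathematical content is identical.
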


\begin{proof}
  By \cref{R5}, $y_1+(y_2-x_2),x_2\vda y_1,{x_2+(y_2-x_2)}$, and if
  $x_1+x_2=_Gy_1+y_2$, then $y_1+(y_2-x_2)=_Gx_1$.
\end{proof}

In the remainder of this section, $\vda$ is a regular entailment
relation for an ordered group~$G$.

\begin{lemma}\label{lem2thGOEntrelGRL}
  Let \(A\in\Pfs(G)\) and \(x\in G\). In the distributive lattice~\(L\) generated by~\(\vda\) (\cref{thEntRelNB}), \(\Vii A\leq_L(\Vii A+x)\vuu(\Vii A-x)\) holds.
\end{lemma}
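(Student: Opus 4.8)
The plan is to read the asserted inequality in $L$ back as an entailment and then to verify it factor by factor using the regularity restatement of \cref{lem1thGOEntrelGRL}. First I would fix the interpretation of the right-hand side: since $L$ is only a \trdi, the symbol $\Vii A+x$ cannot mean an addition performed in $L$, but must denote $\Vii(A+x)$, the meet in $L$ of the images of the translated set $A+x=\sotq{a+x}{a\in A}\subseteq G$, and likewise $\Vii A-x$ stands for $\Vii(A-x)$. With this reading, finite distributivity in $L$ gives
\[
  \Vii(A+x)\vuu\Vii(A-x)=\Vii_{a,a'\in A}\bigl((a+x)\vuu(a'-x)\bigr)\text,
\]
so it suffices to show $\Vii A\leq_L(a+x)\vuu(a'-x)$ for every pair $a,a'\in A$.

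For a fixed pair, the key step is to produce the entailment $a,a'\vda a+x,a'-x$. This is immediate from \cref{lem1thGOEntrelGRL}, because $(a+x)+(a'-x)=_Ga+a'$, i.e.\ the two left-hand elements and the two right-hand elements have equal sums; the regularity of $\vda$ is used exactly here. Monotonicity (\cref{R1}) then lets me enlarge the left-hand side from $\{a,a'\}$ to all of $A$, yielding $A\vda a+x,a'-x$. By the defining relations of $L$ in \cref{thEntRelNB}, this entailment translates directly into the inequality $\Vii A\leq_L(a+x)\vuu(a'-x)$ in $L$ (only the forward, definitional direction of the fundamental theorem is needed, not the reflection).

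Taking the meet over all pairs $a,a'\in A$ and substituting the distributive identity above then gives $\Vii A\leq_L\Vii(A+x)\vuu\Vii(A-x)$, as required. I do not expect a genuine obstacle: each binary factor $(a+x)\vuu(a'-x)$ is dispatched by a single application of \cref{lem1thGOEntrelGRL} followed by monotonicity. The two points that deserve care are, first, recognising that the bookkeeping equality $(a+x)+(a'-x)=a+a'$ is precisely what activates \cref{lem1thGOEntrelGRL}, and second, noticing that one genuinely needs the inequality for all pairs $(a,a')$ — not merely the diagonal $a=a'$ — in order to bound $\Vii A$ below the full meet appearing on the right after distributing.
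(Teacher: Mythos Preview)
Your proposal is correct and follows essentially the same approach as the paper: invoke \cref{lem1thGOEntrelGRL} via the identity $(a+x)+(a'-x)=a+a'$ to obtain $a,a'\vda a+x,a'-x$ for each pair, then pass to $\Vii A$ and use distributivity to assemble the meet over all pairs into $\Vii(A+x)\vuu\Vii(A-x)$. Your version is simply more explicit about the monotonicity step and the reading of $\Vii A+x$ as $\Vii(A+x)$.
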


\begin{proof}
  For every $a,a'\in A$, $a,a'\vda a+x,a'-x$ holds by
  \cref{lem1thGOEntrelGRL}. Therefore
  \[
    \Vii A\leq_L\Vii_{a,a'\in A}(a+x\vuu a'-x)=_L\Bigl(\Vii_{a\in
      A}a+x\Bigr)\vuu\Bigl(\Vii_{a'\in A}a'-x\Bigr)\text.\qedhere
  \]
\end{proof}

\begin{lemma}\label{A+xA-x}
  Let \(A,B\in\Pfs(G)\) and \(x\in G\).
  \begin{asparaenum}
  \item\label{ceun} If \(A,A+x\vda B\) and \(A,A-x\vda B\), then
    \(A\vda B\).
  \item\label{cedeux} \(A,A+x\vda B\) holds if and only if
    \(A\vda B,B-x\).
  \end{asparaenum}
\end{lemma}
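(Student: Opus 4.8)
The plan is to pass to the \trdi~$L$ generated by~$\vda$ via \cref{thEntRelNB}, where $A\vda B$ is equivalent to $\Vii A\leq_L\Vuu B$, and to read both items off identities in~$L$. I write $m=\Vii A$ and $j=\Vuu B$, so that $A,A+x\vda B$ becomes $m\vii(m+x)\leq_L j$ and $A\vda B,B-x$ becomes $m\leq_L j\vuu(j-x)$; by \equivariance (\cref{R4}), translation by any element of~$G$ is an order-automorphism of~$L$, hence commutes with~$\vii$ and with~$\vuu$, and (as $+$ distributes over~$\vii$) satisfies $\Vii(A+x)=m+x$ and $\Vuu(B+x)=j+x$.

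For \cref{ceun} I would invoke \cref{lem2thGOEntrelGRL}, which gives $m\leq_L(m+x)\vuu(m-x)$. Then by distributivity
\[
  m=m\vii\bigl((m+x)\vuu(m-x)\bigr)=\bigl(m\vii(m+x)\bigr)\vuu\bigl(m\vii(m-x)\bigr)\text{,}
\]
and the two hypotheses say exactly that both $m\vii(m+x)$ and $m\vii(m-x)$ are $\leq_L j$; hence $m\leq_L j$, i.e.\ $A\vda B$ by reflection of entailment.

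For the direct implication of \cref{cedeux} the quickest route is syntactic: \equivariance with shift~$-x$ turns $A,A+x\vda B$ into $A,A-x\vda B-x$, whence $A,A-x\vda B,B-x$ and, by monotonicity, $A,A+x\vda B,B-x$; applying \cref{ceun} with right-hand side $B\cup(B-x)$ yields $A\vda B,B-x$. The converse is the crux, and the one point where \cref{lem2thGOEntrelGRL} does not suffice. I would first establish the companion inequality $(j+x)\vii(j-x)\leq_L j$ on the join side: by \cref{lem1thGOEntrelGRL}, $b+x,b'-x\vda b,b'$ for all $b,b'\in B$ because $(b+x)+(b'-x)=_G b+b'$, so $(b+x)\vii(b'-x)\leq_L j$, and distributing the meet of the two joins gives the claim. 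Shifting the hypothesis $m\leq_L j\vuu(j-x)$ by~$x$ gives $m+x\leq_L(j+x)\vuu j$; taking the meet with the hypothesis and distributing,
\[
  m\vii(m+x)\leq_L\bigl(j\vuu(j-x)\bigr)\vii\bigl((j+x)\vuu j\bigr)\text{,}
\]
each term on the right ($j\vii(j+x)$, $j$, $(j-x)\vii(j+x)$, $(j-x)\vii j$) is $\leq_L j$ — the third by the companion inequality — whence $m\vii(m+x)\leq_L j$, i.e.\ $A,A+x\vda B$.

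The main obstacle is precisely this converse: it is the only part not immediate from \cref{lem2thGOEntrelGRL} and distributivity, and it forces one to derive the dual, join-side inequality from regularity through \cref{lem1thGOEntrelGRL}.
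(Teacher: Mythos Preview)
Your proof is correct. For \cref{ceun} and the forward direction of \cref{cedeux} you follow exactly the paper's argument. The only divergence is in the converse of \cref{cedeux}: you establish the companion inequality $(\Vuu B+x)\vii(\Vuu B-x)\leq_L\Vuu B$ directly from \cref{lem1thGOEntrelGRL} and then compute in~$L$, whereas the paper dispatches it in one line by observing that the relation converse to~$\vda$ (i.e.\ $A\vda' B\equidef B\vda A$) is itself a regular entailment relation for~$(G,\geq_G)$, so that the already-proved forward direction, applied to~$\vda'$ with~$-x$ in place of~$x$, yields the converse immediately. The paper's duality is more economical and conceptually tidy; your explicit derivation has the virtue of not asking the reader to check that all axioms (in particular \cref{R5}) survive passage to the converse relation, and it makes transparent that the ``main obstacle'' you isolate is precisely the order-dual of \cref{lem2thGOEntrelGRL}.
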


\begin{proof}
  \begin{asparaenum}
  \item \Cref{thEntRelNB} allows us to work in the distributive lattice~\(L\) generated
    by~\(\vda\).  We have $\Vii A\vii\bigl(\Vii A+x\bigr)\leq_L\Vuu B$ and
    $\Vii A\vii\bigl(\Vii A-x\bigr)\leq_L\Vuu B$. By
    \cref{lem2thGOEntrelGRL},
    \[
      \begin{aligned}
        \Vii A&=_L\Vii A\vii\Bigl(\Bigl(\Vii A+x\Bigr)\vuu\Bigl(\Vii A-x\Bigr)\Bigr)\\
        &=_L\Bigl(\Vii A\vii\Bigl(\Vii A+x\Bigr)\Bigr)\vuu\Bigl(\Vii A\vii\Bigl(\Vii A-x\Bigr)\Bigr)\leq_L\Vuu B\text.
      \end{aligned}
    \]
  \item Suppose that $A,A+x\vda B$. Then $A-x,A\vda B-x$, so that
    $A,A+x\vda B,B-x$ and $A,A-x\vda B,B-x$, and we may apply
    \cref{ceun}. The converse holds because the relation converse
    to~$\vda$ is a regular \entrel for $(G,\geq_G)$.\qedhere
\end{asparaenum}
\end{proof}

\begin{lemma}\label{aa+qx}
  Let \(a,x\in G\) and \(0\leq p\leq q\). Then \(a,a+qx\vda a+px\).
\end{lemma}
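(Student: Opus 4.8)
The plan is to induct on $q$, using regularity to split the meet on the left and then cutting away the unwanted term with the induction hypothesis. First I would dispose of the boundary cases $p=0$ and $p=q$ (which in particular cover $q=0$): there $a,a+qx\vda a+px$ collapses, after discarding one element on the left, to $a\vda a$ or $a+qx\vda a+qx$, so it follows from reflexivity~\cref{R0} together with monotonicity~\cref{R1}.

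For $0<p<q$ the engine is \cref{lem1thGOEntrelGRL}. Since $a+(a+qx)=_G(a+px)+(a+(q-p)x)$, it yields at once
\[
  a,a+qx\vda a+px,\ a+(q-p)x.
\]
This is already the desired conclusion, save for the parasitic term $a+(q-p)x$ on the right, and getting rid of it is the crux of the matter. I would eliminate it by a cut~\cref{R2} against a smaller instance: if I already know $a,a+(q-p)x\vda a+px$, then monotonicity~\cref{R1} promotes it to $a,a+qx,a+(q-p)x\vda a+px$, and transitivity cuts the element $a+(q-p)x$ between this relation and the one displayed above, leaving exactly $a,a+qx\vda a+px$.

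For the second premise of the cut to be an instance of the induction hypothesis I need $0\leq p\leq q-p$, that is $2p\leq q$, and then $q-p<q$ makes the induction descend. The remaining range $2p>q$ I would recover from the symmetry $p\leftrightarrow q-p$: replacing $(a,x,p)$ by $(a+qx,-x,q-p)$ carries the statement for $p$ verbatim onto the statement for $q-p$ (one checks $a+qx+q(-x)=_G a$ and $a+qx+(q-p)(-x)=_G a+px$), so it suffices to treat the balanced case $2p\leq q$.

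The main obstacle is precisely the surplus conclusion $a+(q-p)x$ that a single use of regularity inevitably produces; the whole argument is arranged so as to cancel it by a cut against a strictly smaller instance, the symmetry reduction serving only to guarantee that ``strictly smaller'' can always be arranged.
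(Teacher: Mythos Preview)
Your proof is correct and follows the paper's strategy for the range \(2p\leq q\): both induct on~\(q\), obtain \(a,a+qx\vda a+px,a+(q-p)x\) from regularity (via \cref{lem1thGOEntrelGRL}), and cut the parasite against the induction hypothesis \(a,a+(q-p)x\vda a+px\). The two arguments diverge for \(2p>q\). The paper sets up a secondary induction on~\(p\): it uses the regularity instance \(a+qx,a+(2p-q)x\vda a+px\) and cuts against the inner hypothesis \(a,a+qx\vda a+(2p-q)x\) (valid since \(0\leq 2p-q<p\)). Your substitution \((a,x,p)\mapsto(a+qx,-x,q-p)\) is a cleaner alternative: it turns the statement for \(p\) into the already-established statement for \(q-p\) at the same level~\(q\), so no second induction is needed. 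Both routes work; yours is a touch more economical, while the paper's makes the two halves of the range structurally parallel (each a cut of a regularity instance against an inductive instance).
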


\begin{proof}
  By induction on~$q$. This is trivial if $p=0$ or $p=q$. Suppose that
  \(a,a+q'x\vda a+px\) whenever \(0\leq p\leq q'<q\). Consider first
  $1\leq p\leq q-p$. By hypothesis, $a,a+(q-p)x\vda a+px$. By
  regularity, $a,a+qx\vda a+px,a+(q-p)x$. A cut yields
  $a,a+qx\vda a+px$. Make now an induction on~$p$ with $q-p\leq
  p<q$. Suppose that \(a,a+qx\vda a+p'x\) for $0\leq p'<p$. As
  $0\leq 2p-q<p$, we have $a,a+qx\vda a+(2p-q)x$. By regularity (and contraction),
  $a+qx,a+(2p-q)x\vda a+px$. A cut yields $a,a+qx\vda a+px$.
\end{proof}

\begin{lemma}
  \label{A+px}
  Let \(A,B\in\Pfs(G)\) and \(x\in G\). Let $0\leq p\leq q$.  If
  \(A,A+px\vda B\) holds, or merely \(A,A+px,A+qx\vda B\), then so
  does \(A,A+qx\vda B\).
\end{lemma}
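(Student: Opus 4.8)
The plan is to reduce both hypotheses to the single relation $A,A+px,A+qx\vda B$ and then to peel off the block $A+px$ from the left-hand side by a sequence of cuts. First I would note that the stronger hypothesis $A,A+px\vda B$ entails $A,A+px,A+qx\vda B$ by monotonicity (\cref{R1}); hence it suffices to work throughout from the weaker assumption $A,A+px,A+qx\vda B$, and the conclusion $A,A+qx\vda B$ will cover both cases at once.

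The key ingredient is \cref{aa+qx}: for every $a\in A$ we have $a,a+qx\vda a+px$, so by monotonicity $A,A+qx\vda a+px$. Thus each element $c=a+px$ of $A+px$ is entailed by the set $A,A+qx$, with that whole set standing to the left. These are exactly the resolvents needed to cut the elements of $A+px$ out of the left of our relation.

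Next I would carry out the elimination. Writing $A+px=\{a_1+px,\dots,a_k+px\}$, I cut these elements out one at a time: at the $i$-th stage we hold a relation of the form $A,A+qx,(a_i+px),\dots,(a_k+px)\vda B$, and pairing it with $A,A+qx\vda a_i+px$ and applying transitivity (\cref{R2}) in its generalised form cancels $a_i+px$. After $k$ such cuts every element of $A+px$ has been removed, while $A$ and $A+qx$ persist throughout, so the surviving antecedent is precisely $A,A+qx$; this yields $A,A+qx\vda B$, as desired.

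The only point demanding care — and here one must be attentive rather than clever — is the set-theoretic bookkeeping of the iterated cut: since we manipulate sets rather than lists, each cut both deletes the chosen $a_i+px$ and tacitly re-introduces the copies of $A$ and $A+qx$ coming from the resolvent, so one checks that these re-insertions are harmless and that, once all of $A+px$ is consumed, nothing but $A$ and $A+qx$ remains on the left. Beyond this there is no genuine obstacle; regularity (\cref{R5}) enters only indirectly, through its role in establishing \cref{aa+qx}.
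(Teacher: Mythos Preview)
Your proposal is correct and follows essentially the same route as the paper's proof: the paper likewise invokes \cref{aa+qx} to obtain $a,a+qx\vda a+px$ for each $a\in A$ and then cuts the elements of $A+px$ out successively. Your version is simply a more detailed rendering of that one-line argument, including the explicit reduction (via monotonicity) of the first hypothesis to the second.
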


\begin{proof}
  Cut successively the $a+px$ for~$a\in A$ in the given entailment
  with the entailment \(a,a+qx\vda a+px\) holding by \cref{aa+qx}.
\end{proof}

Let us now give a description of the regular entailment relation
obtained by forcing an element~$x$ to be positive.

\begin{proposition}\label{forcingreg}
  Let \(\vda\)~be a regular entailment relation for an ordered
  group~\(G\). Let us define the relation~\(\vda[x]\) on~$\Pfs(G)$ by
  writing \(A\vda[x]B\) if there is $p\geq0$ such that
  \(A,A+px\vda B\). Then \(\vda[x]\) is a regular entailment relation,
  and it is the finest equivariant entailment relation~\(\vda'\)
  coarser than~\(\vda\) such that \(0\vda'x\).
\end{proposition}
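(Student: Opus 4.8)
The plan is to establish two things: that $\vda[x]$ is a regular entailment relation which is coarser than $\vda$ and satisfies $0\vda[x]x$, and that every equivariant entailment relation $\vda'$ coarser than $\vda$ with $0\vda'x$ is in turn coarser than $\vda[x]$. Since $\vda[x]$ is itself such a relation, the two together exhibit it as the finest one. Most of the axioms are cheap: taking $p=0$ shows that $\vda[x]$ is coarser than $\vda$ and, from the corresponding axioms for $\vda$, that it satisfies \cref{R0,R3,R5}; taking $p=1$ and using $0,x\vda x$ (\cref{R0,R1}) gives $0\vda[x]x$; and \cref{R1,R4} for $\vda[x]$ follow from the same axioms for $\vda$ with the witness~$p$ unchanged, since enlarging the sets, respectively translating by an element, commutes with the block-shift $A\mapsto A+px$. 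The entire difficulty lies in transitivity \cref{R2}.

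For the extremal property, let $\vda'$ be an equivariant entailment relation coarser than $\vda$ with $0\vda'x$. An induction on~$p$ using equivariance and cut gives $0\vda'px$, whence $a\vda'a+px$ for every $a\in G$ by equivariance. If now $A\vda[x]B$, then $A,A+px\vda B$ for some~$p$, hence $A,A+px\vda'B$ by coarseness; cutting each element $a+px$ against $a\vda'a+px$ deletes the whole block $A+px$ from the left and yields $A\vda'B$. Thus $\vda[x]$ is finer than every competitor.

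It remains to prove transitivity, which is the main obstacle. Assume $A\vda[x]B,c$ and $A,c\vda[x]B$. I would record the first as $A,A+px\vda B,c$ and, applying \cref{cedeux} of \cref{A+xA-x} with shift~$qx$, rewrite the second as $A,c\vda B,B-qx$. A single cut on the element~$c$ then gives the clean entailment
\[
  A,A+px\vda B,B-qx\text.
\]
The hard part is to absorb the translated block $B-qx$ that now pollutes the right-hand side, and this is exactly the step that forces the use of regularity. By \cref{thEntRelNB} I pass to the distributive lattice~$L$ generated by~$\vda$, where this entailment reads $\Vii A\vii(\Vii A+px)\leq_L\Vuu B\vuu(\Vuu B-qx)$. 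Translating by~$qx$ (equivariance acts on~$L$ by an automorphism) and taking the meet of the two resulting inequalities, distributivity yields
\[
  \Vii A\vii(\Vii A+px)\vii(\Vii A+qx)\vii(\Vii A+(p+q)x)\leq_L\Vuu B\vuu\bigl((\Vuu B+qx)\vii(\Vuu B-qx)\bigr)\text.
\]
The decisive point is the inequality $(\Vuu B+qx)\vii(\Vuu B-qx)\leq_L\Vuu B$: expanding the left side by distributivity, each cross term $(b+qx)\vii(b'-qx)$ with $b,b'\in B$ is $\leq_L b\vuu b'\leq_L\Vuu B$, because $b+qx,b'-qx\vda b,b'$ by \cref{lem1thGOEntrelGRL} (the two sides both sum to $b+b'$). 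Hence the right-hand side collapses to~$\Vuu B$, and reflecting back through \cref{thEntRelNB} we obtain $A,A+px,A+qx,A+(p+q)x\vda B$. Finally, cutting the intermediate blocks $A+px$ and $A+qx$ against $a,a+(p+q)x\vda a+px$ and $a,a+(p+q)x\vda a+qx$ (\cref{aa+qx}, the mechanism of \cref{A+px}) leaves $A,A+(p+q)x\vda B$, that is $A\vda[x]B$ with witness $p+q$. This establishes \cref{R2} and completes the proof.
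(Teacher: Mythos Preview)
Your proof is correct. The verification of the easy axioms and the extremal property matches the paper's treatment almost verbatim; the difference lies entirely in the transitivity argument.

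For transitivity the paper proceeds differently: it first invokes \cref{A+px} to equalise the two witnesses to a common~$y=px=qx$, sets $A'=A,A+y,A+2y$, and then stays at the level of the entailment relation, using a careful sequence of cuts together with \emph{both} parts of \cref{A+xA-x} (first \cref{cedeux} to turn $A',c\vda B,B+y$ into $A',c,A'-y,c-y\vda B$, then \cref{ceun} to combine this with the $+y$ version and eliminate~$c$), finally applying \cref{A+px} to pass from $A'\vda B$ to $A,A+2y\vda B$. Your route instead applies \cref{cedeux} once up front, cuts~$c$ immediately to obtain $A,A+px\vda B,B-qx$, and then works explicitly in the generated lattice~$L$: you translate, take a meet, and absorb the polluting $B$-block using the join-side analogue of \cref{lem2thGOEntrelGRL}, namely $(\Vuu B+qx)\vii(\Vuu B-qx)\leq_L\Vuu B$, which you prove on the spot from \cref{lem1thGOEntrelGRL}. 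Both arguments rest on the same ingredients (regularity via \cref{lem1thGOEntrelGRL}/\cref{lem2thGOEntrelGRL}, \cref{A+xA-x}, \cref{aa+qx}); the paper packages the lattice computation inside \cref{ceun} of \cref{A+xA-x} and keeps the surface syntactic, while you unfold it and compute directly in~$L$, which makes the role of regularity more visible and spares you the equalisation step (your witness is $p+q$ rather than~$2p$).
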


\begin{proof}
  Only transitivity needs an argument. Suppose that $A,A+px\vda B,c$
  and $A,c,A+qx,c+qx\vda B$. By \cref{A+px}, we may suppose $p=q$; let
  $y=px=qx$. By equivariance, $A+y,A+2y\vda B+y,c+y$. Let us consider
  $A'=A,A+y,A+2y$ and prove $A'\vda B$. By monotonicity, $A'\vda B,c$
  and $A',c,c+y\vda B$ and $A'\vda B+y,c+y$. The two last yield by a
  cut $A',c\vda B,B+y$, which by \cref{cedeux} of \cref{A+xA-x} yields
  $A',c,A'-y,c-y\vda B$. But monotonicity also yields
  $A',c,A'+y,c+y\vda B$, so that by \cref{ceun} of \cref{A+xA-x}
  follows $A',c\vda B$. A cut yields $A'\vda B$. \Cref{A+px} produces
  $A,A+2y\vda B$, and therefore $A\vda[x]B$.

  The relation~$\vda[x]$ is clearly coarser than~$\vda$ and satisfies
  \(0\vda[x]x\). Conversely, suppose that $A\vda[x]B$, i.e.\
  $A,A+px\vda B$ for some $p\geq0$, and consider an equivariant
  entailment relation~$\vda'$ coarser than~$\vda$ and satisfying
  $0\vda'x$. Then $A,A+px\vda'B$ and, because
  $a\vda'a+x\vda\cdots\vda'a+px$ for each~$a\in A$, we may cut
  successively the~$a+px$ and obtain $A\vda'B$.
\end{proof}

\begin{theorem}\label{+x-x}
  Let \(\vda\)~be a regular entailment relation for an ordered
  group~\(G\). If \(A\vda[x]B\) and \(A\vda[-x]B\), then \(A\vda B\).
\end{theorem}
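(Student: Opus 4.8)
The plan is to reduce the statement to the symmetric case already settled in \cref{ceun} of \cref{A+xA-x}, exploiting the fact that the latter is valid for an \emph{arbitrary} element of~$G$ and not merely for a single generator. First I would unpack the two hypotheses through the definition of~$\vda[x]$ recalled in \cref{forcingreg}: $A\vda[x]B$ furnishes some $p\geq0$ with $A,A+px\vda B$, while $A\vda[-x]B$ furnishes some $q\geq0$ with $A,A-qx\vda B$.

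The next step is to align the two exponents. Setting $n=\max(p,q)$, I would invoke \cref{A+px} twice: once with the element~$x$ and the exponents $p\leq n$, yielding $A,A+nx\vda B$, and once with the element~$-x$ and the exponents $q\leq n$ (so that the role of ``$x$'' in that lemma is played by~$-x$), yielding $A,A-nx\vda B$. After this normalisation the two entailments have the shape $A,A+y\vda B$ and $A,A-y\vda B$ for the single group element $y=nx$.

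Finally I would apply \cref{ceun} of \cref{A+xA-x} with $nx$ in the role of~$x$, which directly gives $A\vda B$. The mechanism underneath is the distributive computation in \cref{A+xA-x}, resting on the instance $\Vii A\leq_L(\Vii A+nx)\vuu(\Vii A-nx)$ of \cref{lem2thGOEntrelGRL}. I do not expect any genuine obstacle here: the whole content is the observation that \cref{ceun} of \cref{A+xA-x} is insensitive to the magnitude of the shift, so that the two exponents can be absorbed into one element~$nx$; the only point requiring a moment's care is the correct invocation of \cref{A+px} on the element~$-x$ when treating the $\vda[-x]$ side.
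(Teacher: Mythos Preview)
Your proof is correct and follows essentially the same route as the paper: the paper unpacks the two hypotheses via \cref{forcingreg}, invokes \cref{A+px} to equalise the exponents (``we may suppose $p=q$''), and concludes by \cref{ceun} of \cref{A+xA-x}. Your version is merely more explicit about taking $n=\max(p,q)$ and about applying \cref{A+px} with $-x$ in the role of~$x$.
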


\begin{proof}
  By \cref{forcingreg}, $A,A+px\vda B$ and $A,A-qx\vda B$ for some
  $p,q\geq0$. By \cref{A+px}, we may suppose $p=q$, and conclude by
  \cref{ceun} of \cref{A+xA-x}.
\end{proof}

The meaning of \cref{+x-x} is that if one wants to establish an
entailment involving certain elements, one can always assume that
these elements are linearly ordered. \citet[Principle XI-2.10]{CACM}
call this the ``Principle of covering by quotients for \grls''.

\subsection{Consequences of assuming elements linearly ordered:
  cancellativity}
\label{sec:cancellativity}

If $A$~and~$B$ are linearly ordered nonempty finitely enumerated
subsets of~$G$, then
\[
  \begin{gathered}
    \min(A+B)\leq_G\min B\implies\min A\leq_G0\text,\\
    \min A\leq_G\max B\iff\min(A-B)\leq_G0\iff0\leq_G\max(B-A)\text.
  \end{gathered}
\]
We have therefore the following corollary to \cref{+x-x}.
\begin{corollary}\label{cor+x-x}
  Let \(\vda\) be a regular entailment relation for~\((G,\leq_G)\) and
  \(A,B\in\Pfs(G)\).
  \begin{asparaenum}
  \item\label{cancellative} If \(A+B\vda b\) for every~\(b\in B\),
    then \(A\vda0\).
  \item\label{ABA-B} \(A\vda B\) holds if and only if \(A-B\vda0\), if
    and only if \(0\vda B-A\).
  \end{asparaenum}
\end{corollary}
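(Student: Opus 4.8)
The plan is to deduce both assertions from \cref{+x-x} by the very device it embodies: to prove an entailment it suffices to prove it after forcing the finitely many elements in play to be linearly ordered, and in the linearly ordered case both claims are exactly the two displayed group computations preceding the \lcnamecref{cor+x-x}.

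First I would fix the finite set $S\subseteq G$ of all elements occurring in the assertion at hand together with~$0$ and the relevant sums and differences (that is, $A$, $B$, $A+B$ for the first assertion, and $A$, $B$, $A-B$, $B-A$ for the second). For a pair $u,v\in S$ and $x=u-v$, \cref{forcingreg} provides regular entailment relations $\vda[x]$ and $\vda[-x]$, both coarser than~$\vda$, with $0\vda[x]x$ and $0\vda[-x](-x)$; by \equivariance these give $v\vda[x]u$ and $u\vda[-x]v$, so they decide the order of $u$ and~$v$. Iterating this forcing over all pairs of~$S$ builds a finite binary tree whose root carries~$\vda$ and whose leaves carry regular entailment relations $\vda[i]$, each coarser than~$\vda$, for which every pair of elements of~$S$ is comparable, so that $S$ is linearly preordered by each~$\vda[i]$. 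Applying \cref{+x-x} at each internal node and inducting from the leaves to the root, a target entailment $A\vda C$ with $A,C\subseteq S$ holds as soon as $A\vda[i]C$ holds at every leaf; conversely, coarseness makes any entailment valid for~$\vda$ valid at every leaf.

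It then remains to argue at a single leaf $\vda[i]$ in which $S$ is linearly preordered. In the distributive lattice~$L$ generated by~$\vda[i]$ (\cref{thEntRelNB}) the elements of~$S$ form a chain, whence $\Vii A=_L\min A$ and $\Vuu A=_L\max A$ for $A\subseteq S$, the extrema being taken for the translation-invariant total preorder that $\vda[i]$ induces on singletons; thus $\min(A+B)=\min A+\min B$, $\min(A-B)=\min A-\max B$, and $\max(B-A)=\max B-\min A$. For the first assertion, the hypothesis, available at the leaf by coarseness, gives $\min A+\min B=\min(A+B)\leq\min B$ upon taking $b=\min B$, hence $\min A\leq0$, i.e.\ $A\vda[i]0$. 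For the second, each of $A\vda[i]B$, $A-B\vda[i]0$, and $0\vda[i]B-A$ unwinds to the single inequality $\min A\leq\max B$, so the three are equivalent at the leaf. Pushing these leafwise conclusions back through the reduction of the previous paragraph yields the assertions for~$\vda$.

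The main obstacle is organising this first step rigorously: setting up the iteration of \cref{+x-x} and checking that forcing the signs of all pairwise differences of~$S$ indeed renders~$S$ linearly preordered in each leaf relation, and that meets and joins there collapse to minima and maxima so that the translation-invariant group arithmetic applies. Once this covering by linearly ordered quotients is in place, each assertion is just the corresponding one-line computation displayed before the \lcnamecref{cor+x-x}.
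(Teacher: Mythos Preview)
Your proposal is correct and is essentially the paper's own argument spelled out in full: the paper offers no proof beyond the sentence ``We have therefore the following corollary to \cref{+x-x}'' together with the two displayed linear-order computations, and the remark immediately after \cref{+x-x} that one may always assume the elements involved to be linearly ordered. Your binary-tree iteration of \cref{+x-x} over the pairwise differences of a fixed finite set~$S$, followed by the leafwise verification via $\min$ and $\max$, is exactly the intended unpacking of that sentence.
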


Let $\rh$ be the \scentrel defined as the restriction of~$\vda$ to
$\Pfs(G)\times G$.
\begin{asparaitem}
\item \Cref{cancellative} expresses that the meet-monoid associated
  with~$\rh$ is cancellative (see \cref{thGoembedsGrl3} of \cref{thGoembedsGrl}). In \cref{secSymInfmo}, we provide the
  construction of its Grothendieck group, which is an $\ell$-group,
  and draw the conclusion that the underlying distributive lattice
  coincides with the one generated by~$\vda$.
\item \Cref{ABA-B} expresses that $\vda$ is determined by
  $\rh$. Conversely, given a \scentrel~$\rh$, there are several
  unbounded \entrels that reflect~$\rh$: see \citealt[\S~3]{Lor1952}, and
  \citealt[\S~3.1]{rinaldischusterwessel16}.
\end{asparaitem}


\section{Consequences of cancellativity}\label{sec:cons-canc}

\subsection{The Grothendieck \grl of a \indtr-or\-de\-red
  monoid}\label{secSymInfmo}

\begin{definition}
  Let \((M,+,0)\) be a monoid. The \emph{Grothendieck group of~\(M\)}
  is the group freely generated by \((M,+,0)\) (in the sense of the
  left adjoint functor of the forgetful functor).
\end{definition}

\begin{lemma}\label{sec:groth-grl-indtr}
  The Grothendieck group~\(H\) of~\(M\) may be obtained by considering
  the monoid of formal differences \(a-b\) for \(a,b\in M\), equipped
  with the addition \((a-b)+(c-d)=(a+c)-(b+d)\) and the neutral
  element \(0-0\), and by taking its quotient by the equality
  \[a-b=_H c-d\enskip\equidef\enskip\exists x\in M\
    a+d+x=_Mb+c+x\text.\] Every equality $a-b=_H c-d$ may be reduced
  to two elementary ones, i.e.\ of the form $e-f=_H(e+y)-(f+y)$.
\end{lemma}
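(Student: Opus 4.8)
The plan is to check that the explicit construction satisfies the universal property defining the Grothendieck group, and then to read off the announced reduction by a direct computation. First I would verify that the relation $a-b=_Hc-d\equidef\exists x\in M\ a+d+x=_Mb+c+x$ is a congruence on the monoid of formal differences (with componentwise addition and neutral element $0-0$). Reflexivity and symmetry are immediate, and compatibility with addition follows by adding the two witnesses; the only point requiring care is transitivity, and this is where the existential witness is indispensable, $M$ being a priori non-cancellative. Concretely, from $a+d+x=_Mb+c+x$ and $c+f+y=_Md+e+y$ one adds $f+y$ to the former and $b+x$ to the latter, whose sides share the common term $b+c+x+f+y$; chaining the two identities yields $a+f+(d+x+y)=_Mb+e+(d+x+y)$, so that $d+x+y$ witnesses $a-f=_Hb-e$. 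I regard this transitivity computation as the only genuine obstacle: it is precisely the step that forces the definition to quantify over~$x$.

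Next I would observe that the quotient $H$ is a commutative monoid with neutral element $0-0$, and that it is in fact a group, the inverse of $a-b$ being $b-a$ since $(a-b)+(b-a)=(a+b)-(a+b)=_H0-0$. Setting $\iota\colon M\to H$, $a\mapsto a-0$, gives a monoid morphism, and I would confirm the universal property to identify $H$ with the Grothendieck group. Given any monoid morphism $f\colon M\to G$ into a group~$G$, one defines $\bar f(a-b)=f(a)-f(b)$; this is well defined because applying $f$ to $a+d+x=_Mb+c+x$ and cancelling $f(x)$ in~$G$ gives $f(a)-f(b)=f(c)-f(d)$. It is a group morphism, it extends $f$, and it is the unique such map, since every element satisfies $a-b=_H\iota(a)-\iota(b)$, which forces its value.

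Finally, for the reduction to two elementary equalities I would note that each elementary equality $e-f=_H(e+y)-(f+y)$ holds unconditionally (take $x=0$). Given $a-b=_Hc-d$ with witness $a+d+x=_Mb+c+x$, I would move up from each side by one elementary step, to $(a+d+x)-(b+d+x)$ and to $(c+b+x)-(d+b+x)$ respectively; the hypothesis together with commutativity makes these two formal differences identical, since $a+d+x=_Mb+c+x=_Mc+b+x$ and $b+d+x=_Md+b+x$. Hence the given equality is the concatenation of exactly two elementary ones, which also shows that $=_H$ is generated by the elementary equalities with connecting chains of length two.
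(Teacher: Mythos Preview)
Your argument is correct and follows the paper's: the paper defers the Grothendieck-group construction to Bourbaki and proves the reduction by exactly your chain
\[
  a-b=_H(a+d+x)-(b+d+x)=(b+c+x)-(b+d+x)=_Hc-d,
\]
invoking symmetry for the last step. One slip to fix: in your transitivity check, the identity $a+f+(d+x+y)=_Mb+e+(d+x+y)$ witnesses $a-b=_He-f$ (which is what you want), not ``$a-f=_Hb-e$'' as you wrote.
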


\begin{proof}
  See \citealt[I, §~2.4]{bourbaki74}, but for the last assertion, which
  follows from transitivity and symmetry of~$=_H$:
  \[
    a-b=_H (a+d+x)-(b+d+x)=_H(b+c+x)-(b+d+x)=_H c-d\text.\qedhere
  \]
\end{proof}

The following easy construction, for which we did not locate a good
reference \citep[but compare][§~2.4]{cignolidottavianomundici00}, is
particularly significant in the case where the \mmonoid associated
with an \si proves to be cancellative.
\begin{theorem}\label{thGoembedsGrl}
  Let \((M,+,0,\vii)\) be a \mmonoid.  Let \(H\) be the Grothendieck
  group of \(M\) with monoid morphism \(\mor\colon M\to H\).
  \begin{asparaenum}
  \item\label{thGoembedsGrl1} There is a unique \mmonoid structure
    on \(H\) such that \(\mor\) is a morphism of ordered sets.
  \item\label{thGoembedsGrl2} \((H,+,-,0,\vii)\) is an \grl: it is the
    \grl freely generated by \((M,+,0,\vii)\) (in the sense of the
    left adjoint functor of the forgetful functor), and called the
    \emph{Grothendieck \grl} of~\(M\).
  \item\label{thGoembedsGrl3} Assume that \(M\) is cancellative, i.e.\
    that \(a+x=_Mb+x\) implies \(a=_Mb\).  Then \(\mor\) is an
    embedding of \mmonoids.
  \end{asparaenum}
\end{theorem}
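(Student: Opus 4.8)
The plan is to equip $H$ with an explicit meet and then read off every claim from the formula. Motivated by translation invariance, I would define, for $a,b,c,d\in M$,
\[
  (a-b)\vii(c-d)\eqdef\bigl((a+d)\vii(c+b)\bigr)-(b+d)\text,
\]
the meet on the right being taken in~$M$. The first and main task is to check that this is well defined on~$H$. By the last assertion of \cref{sec:groth-grl-indtr} it suffices to see that the formula respects an elementary equality $e-f=_H(e+y)-(f+y)$ in each argument; replacing $(a,b)$ by $(a+y,b+y)$ turns the right-hand side into $\bigl(y+((a+d)\vii(c+b))\bigr)-\bigl(y+(b+d)\bigr)$, by compatibility of $\vii$ with $+$ in~$M$, and this is again an elementary equality (and symmetrically in the second argument). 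This is where the compatibility axiom of a \mmonoid does the real work; once it is in place, idempotence, commutativity, associativity, and compatibility with addition on~$H$ all follow by the same kind of direct computation from the corresponding facts in~$M$. Taking $b=d=0$ gives $\mor(a)\vii\mor(c)=\mor(a\vii c)$, so $\mor$ is a morphism of \mmonoids, a fortiori of ordered sets; and since any meet compatible with addition is translation invariant, it is determined by its values on $\mor(M)\times\mor(M)$, which are pinned down by the requirement that $\mor$ respect the order, giving the uniqueness in \cref{thGoembedsGrl1}.

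For \cref{thGoembedsGrl2} the \grl structure is then immediate: $H$ is a group, meet-semilattice-ordered by the above, with order compatible with addition, so by the footnote characterisation it is an \grl with $a\vuu b=-(-a\vii-b)$. Freeness I would obtain from the universal property of the Grothendieck group: given an \grl $N$ and a morphism $f\colon M\to N$ of \mmonoids, there is a unique group morphism $\tilde f\colon H\to N$ with $\tilde f\mor=f$, because $H$ is the free group on the monoid~$M$. It then remains to check that $\tilde f$ preserves $\vii$, which is a one-line computation: $\tilde f\bigl((a-b)\vii(c-d)\bigr)=\bigl(f(a+d)\vii f(c+b)\bigr)-f(b+d)$, while translation invariance of $\vii$ in~$N$ expands $\tilde f(a-b)\vii\tilde f(c-d)$ to the same expression. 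A group morphism preserving $\vii$ preserves $\vuu$ as well, so $\tilde f$ is an \grl morphism, and it is unique because it is already unique as a group morphism.

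Finally, for \cref{thGoembedsGrl3}, when $M$ is cancellative the equality $\mor(a)=_H\mor(b)$ unfolds to $a+x=_Mb+x$ for some~$x$, whence $a=_Mb$, so $\mor$ is injective; it preserves $\vii$ by the computation above; and it reflects order, since $\mor(a)\leq_H\mor(b)$ means $\mor(a\vii b)=\mor(a)$, i.e.\ $a\vii b=_Ma$ by injectivity, i.e.\ $a\leq_Mb$. Hence $\mor$ is an embedding of \mmonoids. The one genuinely delicate point in the whole argument is the well-definedness of the displayed meet on~$H$; every other item reduces, via translation invariance, to the meet-monoid axioms of~$M$ together with the universal property of the Grothendieck group.
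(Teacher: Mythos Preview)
Your proof is correct and follows essentially the same route as the paper: the identical explicit formula for the meet on~$H$, the same well-definedness check via elementary equalities and compatibility of~$\vii$ with~$+$ in~$M$, and the same reduction of the remaining axioms to those of~$M$. You spell out the uniqueness in \cref{thGoembedsGrl1} and the freeness in \cref{thGoembedsGrl2} more explicitly than the paper (which only says the construction ``only uses the hypothesis that there is a meet-monoid morphism $\mor\colon M\to H$''), but the underlying arguments are the same.
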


\begin{proof}
  \begin{asparaenum}
  \item[(\ref{thGoembedsGrl1})\enspace\Bullet] When trying to define
    $z=(e-f)\vii(i-j)$ we need to ensure that
    $f+j+z=_M(e+j)\vii (i+f)$: so we claim that
    $z\eqdef ((e+j)\vii (i+f))-(f+j)$ will do.  Let us show first that
    the law $\vii$ is well-defined on $H$: by
    \cref{sec:groth-grl-indtr}, it suffices to show that
    $z=_H{((e+y)-(f+y))}\vii{(i-j)}$, which reduces successively to
    ${((e+j)\vii (i+f))-(f+j)}=_H{((e+j+y)\vii(i+f+y))}-{(f+j+y)}$ and
    to $((e+j)\vii(i+f))+{(f+j+y)}=_M{((e+j+y)\vii(i+f+y))}+{(f+j)}$.
    Since $\vii$ is compatible with $+$ in $M$, both sides are equal
    to ${(e+2j+f+y)}\vii{(i+2f+j+y)}$.
  \item[\Bullet] The map $\mor\colon M\to H$ preserves $\vii$: in fact
    $\mor(a)\eqdef a-0$, and the checking is immediate.
  \item[\Bullet] The law $\vii$ on $H$ is idempotent, commutative, and
    associative.  This is easy to check and left to the reader.
  \item[\Bullet] The law $\vii$ is compatible with $+$ on $H$. This is
    easy to check and left to the reader.
  \item[(\ref{thGoembedsGrl2})] This construction yields the \grl
    freely generated by~\(M\) because it only uses the hypothesis that
    there is a meet-monoid morphism~$\mor\colon M\to H$.
  \item[(\ref{thGoembedsGrl3})] Cancellativity may be read precisely as
    the injectivity of~$\mor$. The \mmonoid structure is purely
    equational, so that an injective morphism is always an
    embedding.\qedhere
  \end{asparaenum}
\end{proof}

\subsection{The \grl generated by a regular entailment
  relation}\label{sec:proof-thref}

The main result of this article is \cref{ithGOEntrelGRL} below: it
states that regular entailment relations provide a description of all
morphisms from an ordered group~$G$ to \grls generated by (the image
of)~$G$.

\begin{theorem}\label{ithGOEntrelGRL}
  Let \(\vda\)~be a regular entailment relation for an ordered
  group~\(G\).  Let \(H\) be the \trdi generated by the entailment
  relation~\(\vda\). Then there is a (unique) group law on~\(H\) which
  is compatible with its lattice structure and such that the natural
  morphism (of ordered sets)~\(G\to H\) is a group morphism. The
  resulting \grl is called the \emph{group of ideals} associated
  with~\(\vda\).
\end{theorem}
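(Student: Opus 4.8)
The plan is to realise $H$ as the Grothendieck \grl of the \mmonoid attached to the single-conclusion part of $\vda$, as announced in the introduction. First I would let $\rh$ be the restriction of $\vda$ to $\Pfs(G)\times G$; this is an \si for $G$ (\cref{defregsysideals}), so by \cref{ThSIJaf} it generates a \mmonoid $M$ — its monoid of ideals — together with a natural monoid morphism $G\to M$. Because $G$ is a group, \cref{cancellative} of \cref{cor+x-x} shows that $M$ is cancellative. I would then form the Grothendieck \grl $H'$ of $M$ (\cref{thGoembedsGrl}); by \cref{thGoembedsGrl3} of \cref{thGoembedsGrl} the canonical map $M\to H'$ is an embedding of \mmonoids, and its composite with $G\to M$ gives a map $G\to H'$ which, being a monoid morphism with values in a group, is a group morphism. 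Writing $[g]$ for the image of $g\in G$ in $H'$, every element $\Vii A$ of $M$ maps to $\Vii_{a\in A}[a]$, and a general element $\Vii A-\Vii B$ of $H'$ rewrites, using $u+(v\vuu w)=(u+v)\vuu(u+w)$ and $[a]+[-b]=[a-b]$, as $\Vuu_{b\in B}\Vii_{a\in A}[a-b]$; in particular $H'$ is generated as a \trdi by the image of $G$.

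The heart of the proof is to identify the underlying \trdi of $H'$ with $H$. The bridge is the chain of equivalences, valid for $A,B\in\Pfs(G)$,
\[
  \Vii_{a\in A}[a]\leq_{H'}\Vuu_{b\in B}[b]
  \iff\Vii_{a\in A,\,b\in B}[a-b]\leq_{H'}0
  \iff\Vii(A-B)\leq_M 0
  \iff A-B\vda 0
  \iff A\vda B,
\]
where the first step uses that addition distributes over $\vii$ and $\vuu$ in the \grl $H'$, the second that $M\to H'$ is an order embedding, the third the definition of $M$ as generated by $\rh$, and the last \cref{ABA-B} of \cref{cor+x-x}. Together with the reflection of entailment in $H$ (\cref{thEntRelNB}), namely $A\vda B\iff\Vii_{a\in A}[a]\leq_H\Vuu_{b\in B}[b]$, the forward implications show that the assignment sending each generator $g$ of $H$ to $[g]\in H'$ respects the defining relations of $H$, so the universal property of \cref{thEntRelNB} yields a \trdi morphism $\Phi\colon H\to H'$. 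Surjectivity of $\Phi$ follows from the normal form of the previous paragraph, and the full biconditional above gives that $\Phi$ is order-reflecting: an arbitrary comparison $u\leq v$ reduces, via disjunctive and conjunctive normal forms $u=\Vuu_i\Vii_{a\in A_i}[a]$ and $v=\Vii_j\Vuu_{b\in B_j}[b]$ in the \trdi $H$, to the finitely many comparisons $\Vii_{a\in A_i}[a]\leq\Vuu_{b\in B_j}[b]$, and likewise for $\Phi(u)\leq\Phi(v)$, each handled by the displayed chain. Hence $\Phi$ is an isomorphism of \trdis compatible with the maps from $G$.

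It then remains to transport the group law of $H'$ to $H$ along $\Phi$. Since $H'$ is an \grl and $\Phi$ is an isomorphism of \trdis, this makes $H$ an \grl whose lattice operations are the original ones and for which the natural morphism $G\to H$ is a group morphism, establishing existence. For uniqueness I would observe that any group law on $H$ compatible with its lattice structure and making $G\to H$ a group morphism is forced: writing two elements in normal form and using that addition distributes over $\vii$ and $\vuu$, every sum reduces to an expression $\Vuu\Vii[a+b]$ with $[a+b]=[a]+[b]$, while negation is determined by $-[g]=[-g]$ together with De Morgan's laws in an \grl.

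The step I expect to be the main obstacle is precisely the identification of the two \trdis, and within it the order-reflecting property of $\Phi$: it is here that all the earlier results converge, since one must pass from an inequality between joins of meets in the abstractly presented lattice $H$ to a single-conclusion statement about $M$. This passage relies decisively on \cref{ABA-B} of \cref{cor+x-x} (that $\vda$ is recovered from its restriction $\rh$) and on the cancellativity of \cref{cancellative} of \cref{cor+x-x}, which is what makes $M\to H'$ an embedding and hence an order embedding.
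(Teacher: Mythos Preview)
Your proof is correct and follows essentially the same route as the paper: restrict $\vda$ to obtain a regular \si~$\rh$, use \cref{cancellative} of \cref{cor+x-x} to get cancellativity of the associated \mmonoid~$M$, embed $M$ into its Grothendieck \grl via \cref{thGoembedsGrl}, and identify the underlying \trdi with~$H$ through \cref{ABA-B} of \cref{cor+x-x}. The only noticeable difference is in the uniqueness argument: the paper appeals to \cref{thGoembedsGrl2} of \cref{thGoembedsGrl} (freeness of the Grothendieck \grl), whereas you argue directly via normal forms and distributivity---both arguments are short and correct, and your more explicit treatment of the lattice identification is a faithful unpacking of what the paper compresses into a single sentence.
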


\begin{proof}
  By \cref{cancellative} of \cref{cor+x-x}, the meet-monoid associated
  with the restriction~$\rh$ of~$\vda$ is cancellative, so that by
  \cref{thGoembedsGrl3} of \cref{thGoembedsGrl} it embeds into its
  Grothendieck \grl~$H$. The underlying distributive lattice coincides
  with the one generated by~$\vda$ by \cref{ABA-B} of
  \cref{cor+x-x}. Uniqueness follows from \cref{thGoembedsGrl2} of
  \cref{thGoembedsGrl}.
\end{proof}

Let us state a variant of \cref{ithGOEntrelGRL}.

\begin{corollary}\label{corthGOEntrelGRL}
  Let \((G,{\leq_G})\)~be an ordered group and \(\rh\)~an \si
  for~\(G\). The following are equivalent:
  \begin{asparaenum}
  \item\label{corthGOEntrelGRL1} The \si~\(\rh\) is regular (i.e.\ it
    is the restriction of a regular entailment relation~\(\vda\)).
  \item\label{corthGOEntrelGRL2} The \mmonoid associated with the \si
    \(\rh\) for \(G\) (\cref{ThSIJaf}) is cancellative.
  \end{asparaenum}
\end{corollary}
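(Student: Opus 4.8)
The plan is to prove the two implications, reusing the apparatus built for \cref{ithGOEntrelGRL}. Throughout, let $M$ be the \mmonoid associated with~$\rh$ and $\eta\colon G\to M$ the natural morphism of ordered monoids furnished by \cref{ThSIJaf}, so that by the reflection of entailment in \cref{thSemiEntRelNB} one has $A\rh b\iff\Vii A\leq_M\eta(b)$, and $\Vii A=_M\Vii B$ amounts to $A=_{\rh}B$.

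For \ref{corthGOEntrelGRL2}$\Rightarrow$\ref{corthGOEntrelGRL1}, which is the substantial direction, I would manufacture a regular entailment relation out of an \grl. Since $M$ is cancellative, \cref{thGoembedsGrl3} of \cref{thGoembedsGrl} makes the canonical morphism $\iota\colon M\to H$ into the Grothendieck \grl an embedding of \mmonoids. The composite $\psi=\iota\circ\eta\colon G\to H$ is then a morphism of ordered monoids into an \grl, and the construction at the head of \cref{sec:grls-regular-entrels} attaches to~$\psi$ the relation
\[
  a_1,\dots,a_k\vda b_1,\dots,b_\ell\enskip\equidef\enskip\psi(a_1)\vii\dots\vii\psi(a_k)\leq_H\psi(b_1)\vuu\dots\vuu\psi(b_\ell)\text,
\]
which satisfies \cref{R3,R4,R5} --- and is hence regular --- precisely because $\psi$ preserves order, is a monoid morphism, and lands in an \grl.

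It then remains to check that the restriction of~$\vda$ to $\Pfs(G)\times G$ is~$\rh$; this is the step where cancellativity is genuinely used, and the one I expect to require the most care, since one must follow the element~$\Vii A$ through the two successive constructions $G\to M\hookrightarrow H$ and confirm that no relation is added along the way. Unwinding the definition, $A\vda b$ reads $\Vii_{a\in A}\psi(a)\leq_H\psi(b)$; as $\iota$ preserves finite meets, the left-hand side equals $\iota(\Vii A)$ and the right-hand side equals $\iota(\eta(b))$, so that, $\iota$ being an embedding (hence order-reflecting), this is equivalent to $\Vii A\leq_M\eta(b)$, i.e.\ to $A\rh b$.

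For \ref{corthGOEntrelGRL1}$\Rightarrow$\ref{corthGOEntrelGRL2}, I would invoke \cref{cancellative} of \cref{cor+x-x} for a regular entailment relation~$\vda$ restricting to~$\rh$, together with \equivariance. To see that $M$ is cancellative, suppose $\Vii(A+C)=_M\Vii(B+C)$, i.e.\ $A+C=_{\rh}B+C$. Fixing $b\in B$, the inequality $A+C\leq_{\rh}B+C$ gives $A+C\vda b+c$ for every $c\in C$; translating by~$-b$ yields $(A-b)+C\vda c$ for every $c\in C$, whence \cref{cancellative} produces $A-b\vda0$ and, translating back, $A\rh b$. By symmetry $B\rh a$ for every $a\in A$, so that $\Vii A=_M\Vii B$. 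This is the observation recorded after \cref{cor+x-x}, and here the group structure of~$G$ enters through the translations by $\pm b$.
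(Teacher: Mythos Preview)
Your proposal is correct and follows essentially the same route as the paper. For \ref{corthGOEntrelGRL2}$\Rightarrow$\ref{corthGOEntrelGRL1} you do exactly what the paper's terse ``see the proof of \cref{ithGOEntrelGRL}'' intends: build the Grothendieck \grl~$H$ of the cancellative~$M$, pull back the regular entailment relation from~$H$, and use that $\iota$ is an embedding to recover~$\rh$ as its single-conclusion restriction. For \ref{corthGOEntrelGRL1}$\Rightarrow$\ref{corthGOEntrelGRL2} the paper instead quotes \cref{ithGOEntrelGRL} to exhibit $M$ as a submonoid of the \grl~$H$ and concludes cancellativity from that; your direct argument via \cref{cancellative} of \cref{cor+x-x} unpacks precisely the step on which the proof of \cref{ithGOEntrelGRL} already rests (and which the paper flags just after \cref{cor+x-x}), so the two arguments have identical content.
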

%
\begin{proof}
  \begin{asparaitem}
  \item[$(\ref{corthGOEntrelGRL1})\implies(\ref{corthGOEntrelGRL2})$.]
    The subset~$M\subseteq H$ of those elements that may be written
    $\mor(x_1)\vii\dots\vii\mor(x_n)$ for some $x_1,\dots,x_n\in G$ is
    the \indtr associated with the \si~$\rh$ obtained by
    restricting~$\vda$ to~$\Pfs(G)\times G$. This subset is stable by
    addition, so that the restriction of addition to $M$ endows it
    with the structure of a cancellative \mmonoid. Thus $H$ is
    necessarily (naturally isomorphic to) the Grothendieck \grl of
    $M$.
  \item[$(\ref{corthGOEntrelGRL2})\implies(\ref{corthGOEntrelGRL1})$.] See
    the proof of \cref{ithGOEntrelGRL}.  \qedhere
  \end{asparaitem}
\end{proof}

\begin{comment}\label{remhist2}
  \Cref{ithGOEntrelGRL} is new and replaces the second step of the
  proof of Satz~1 in \citealt{Lor1953} (see \cpageref{satz1}), which
  establishes that the distributive lattice~$H$ is in fact an \grl by
  constructing ``by hand'' a group law without emphasis on the rôle of
  regularity.  This rôle is revealed by our presentation, which allows
  for more conceptual arguments.\eoe
\end{comment}

\section{The regularisation of an \si for an ordered
  group}\label{sec:regularisation}

\subsection{Definition}

An \si gives rise to a regular entailment relation if one proceeds as if elements occurring in a computation are
comparable.  More precisely, this idea gives the following definition.
\begin{definition}[{see \citealp[(2.2) and page 23]{Lor1953}}]
  \label{ideflorrel} Let $\rh$~be an \si for an ordered group~$G$.
\begin{asparaenum}
\item For $y_1,\dots,y_n\in G$, consider the
  \si $\rh[\!y_1,\dots,y_n]$ coarser than $\rh$ obtained by forcing
  the \properties $0\rh y_1$, \dots,~$0\rh y_n$.  The
  \emph{regularisation} of~$\rh$ is the relation on~$\Pfs(G)$ defined
  by
  \begin{equation*}
    A\vda[\rh]B\enskip\equidef\enskip
    \begin{aligned}[t]
      &\text{there are $x_1,\dots,x_m\in G$ such that for every }\\
      &\text{choice of signs $\pm$, $A-B\rh[\pm x_1,\dots,\pm x_m]0$ holds.}
    \end{aligned}
  \end{equation*}
\item\label{ideflorrel1} An element~$b$ of $G$ is
  \emph{$\rh$-dependent on~$A$} if $A\vda[\rh]b$.
\item\label{ideflorrel2} The group~$G$ is \emph{$\rh$-closed} if
  $\vda[\rh]$ reflects the order on~$G$, i.e.\ if the implication
  ${a\vda[\rh]b}\implies{a\leq_Gb}$ holds for all $a,b\in G$.
\end{asparaenum}
\end{definition}
The terminology of \cref{ideflorrel1,ideflorrel2} comes from integral
domains.  Regularisation is an early occurrence of dynamical
algebra \citep[see][]{CLR2001}: we shall see that $\rh$-closedness is
the dynamical counterpart to being embedded into a product of linearly
preordered groups.  Let us go through a simple example that
illustrates a relevant feature of this construction (compare
\cref{lemmath2GOEntrelGRL0}).

\begin{example}
  \label{exaideflorrel}
  Let us apply a case-by-case reasoning in order to prove that in a
  linearly ordered group, if $n_1a_1+\dots+n_ka_k\leq 0$ for some
  integers~$n_i\geq0$ not all zero, then $a_j\leq 0$ for some~$j$. If
  $a_j\leq 0$ for some~$j$, everything is all right. If $0\leq a_j$
  for all~$j$, take~$i$ such that $n_i\geq1$: then
  ${a_i}\leq{n_ia_i}\leq{n_1a_1+\dots+n_ka_k}\leq0$. The conclusion
  holds in each case.  Similarly, assume that
  $n_1a_1+\dots+n_ka_k\rh 0$ with $n_i\geq 0$ not all zero. We have
  $a_j\rh_{-a_j}0$ for each~$j$. By monotonicity,
  $a_1,\dots,a_k\rh[\epsilon_1 a_1,\dots,\epsilon_k a_k]0$ holds if at
  least one $\epsilon_j$ is equal to~$-1$. If we force $0\rh a_j$ for
  all~$j$, take~$i$ such that $n_i\geq1$: then
  ${a_i}\leq_{\rh}{n_ia_i}\leq_{\rh}{n_1a_1+\dots+n_ka_k}\leq_{\rh}0$. This
  proves that \(a_1,\dots,a_k\rh[+ a_1,\dots,+ a_k]0\). We conclude
  that $a_1,\dots,a_k\vda[\rh]0$.\eoe
\end{example}

Note that \cref{subseclcdalaLor,subsecaritalaLor} below do not resort
to the fundamental theorem of unbounded entailment relations,
\cref{thEntRelNB}, i.e.\ the reasoning takes place on the level of the
entailment relations and not in the generated \trdi.

\subsection{The regularisation is the regular \entrel generated by an \si}\label{sec:proof-ithmDivLorsi}

\citet[\S~2]{Lor1953} starts with an \si~$\rh$ for an ordered group
and uses the heuristics of \cref{ABA-B} of \cref{cor+x-x} to define
the regularisation~$\vda[\rh]$ as in \cref{ideflorrel}. Then he
applies the fundamental theorem for unbounded entailment relations,
\cref{thEntRelNB}, and obtains a distributive lattice. This article
wishes to assess the following remarkable theorem (which holds also
for noncommutative groups).

\begin{satzeins}[{\citealt{Lor1953}}]\label{satz1}
  Let \(\rh\)~be an \si for an ordered group~\(G\). Its
  regularisation~\(\vda[\rh]\) is a regular entailment relation and
  the action of~\(G\) on the distributive lattice~\(H\) generated
  by~\(\vda[\rh]\) may be extended to a group law for~\(H\).
\end{satzeins}

\Cref{ithmDivLorsi} below strengthens the first step of the
proof of Satz~1, in which the entailment relation~$\vda[\rh]$ is
constructed and shown to be regular as in \cref{lem1ithmDivLorsi}.  In
our analysis of Lorenzen's proof, we separate this step from its
second step, the explicit construction of a group law for the
regularisation. Our presentation makes regularity (\cref{R5}) the
lever for sending~$G$ homomorphically into an \grl in
\cref{ithGOEntrelGRL}.

\begin{proposition}\label{lem1ithmDivLorsi}
  Let \(\rh\) be an \si for an ordered group~\(G\). Its
  regularisation~\(\vda[\rh]\) is a regular entailment relation
  for~\(G\).
\end{proposition}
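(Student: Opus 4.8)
The plan is to verify the axioms \cref{R0,R1,R3,R4,R5} and then \cref{R2} for $\vda[\rh]$, everything resting on two elementary observations. First, the defining condition only involves the translation-invariant set $A-B$, so that $A\vda[\rh]B$ is equivalent to $(A-B)\vda[\rh]0$. Second, adjoining further elements to the finite family $x_1,\dots,x_m$ of test elements only coarsens each forced \si $\rh[\pm x_1,\dots,\pm x_m]$, hence preserves the defining property. Granting these, reflexivity \cref{R0} and \presord \cref{R3} are immediate taking $m=0$, since $0\rh0$ and, when $a\leq_Gb$, $(a-b)\rh0$ by \cref{S3}; equivariance \cref{R4} holds because $(x+A)-(x+B)=A-B$; and monotonicity \cref{R1} holds with the same test elements, since $(A,A')-(B,B')\supseteq A-B$ and each system $\rh[\pm x_1,\dots,\pm x_m]$ is monotone by \cref{S1}. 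For regularity \cref{R5} I would take the single test element $x-y$: here $A-B=\{x-y,\,a-b,\,b-a,\,y-x\}$ contains both $x-y$ and its opposite $y-x$, so forcing either sign of $x-y$ makes one of these two opposite elements entail $0$ by \cref{S4}, and monotonicity finishes each of the two sign cases.

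The only real work is transitivity \cref{R2}. After the reduction above it reads: from $(D\cup P)\vda[\rh]0$ and $(D\cup Q)\vda[\rh]0$, where $D=A-B$, $P=\{a-c:a\in A\}$ and $Q=\{c-b:b\in B\}$, deduce $D\vda[\rh]0$. The crucial algebraic fact is the identity
\[ (A-c)+(c-B)=A-B,\qquad\text{i.e.}\qquad P+Q=D, \]
each $a-b\in D$ being the sum of $a-c\in P$ and $c-b\in Q$. I would merge the two given test families into one family $Z$ and then adjoin to $Z$ all the elements of $P$ and of $Q$; by the coarsening observation, both hypotheses persist for this common enlarged family. It then suffices to prove $D\rh[\sigma]0$ for every sign choice $\sigma$ on $Z$, working in the \mmonoid associated with the \si $\rh[\sigma]$ (\cref{ThSIJaf}), in which $\Vii(P+Q)=\Vii P+\Vii Q$.

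Fix $\sigma$; it forces each $a-c$ and each $c-b$ to be $\geq0$ or $\leq0$. I would distinguish three cases. If some $a_0-c$ is forced $\leq0$, then \cref{S4} gives $a_0-b\rh[\sigma]c-b$ for every $b$, whence $\Vii D\leq a_0-b\leq c-b$ and so $\Vii D\leq\Vii Q$; the hypothesis $\Vii D\vii\Vii Q\leq0$ then collapses to $\Vii D\leq0$, i.e.\ $D\rh[\sigma]0$. Symmetrically, if some $c-b_0$ is forced $\leq0$, the other hypothesis yields $D\rh[\sigma]0$. The remaining case is that all $a-c\geq0$ and all $c-b\geq0$ are forced; then $\Vii P\geq0$ and $\Vii Q\geq0$, so $\Vii D=\Vii P+\Vii Q\geq\Vii P$, and $\Vii D\vii\Vii P\leq0$ forces $\Vii P\leq0$, hence $\Vii P=0$; the same argument gives $\Vii Q=0$, so $\Vii D=\Vii P+\Vii Q=0$, i.e.\ $D\rh[\sigma]0$. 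Since the three cases exhaust all sign choices, $D\vda[\rh]0$ follows.

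The hard part is exactly this last ``all positive'' case: forcing the signs of the individual elements of $P$ and $Q$ does \emph{not} linearly order the meets $\Vii P,\Vii Q,\Vii D$, so one cannot simply argue that $\Vii D\vii\Vii P\leq0$ together with $\Vii P\geq0$ gives $\Vii D\leq0$. What saves the argument is the factorisation $D=P+Q$ together with distributivity in the \mmonoid, which turns the positivity of $\Vii Q$ into the comparison $\Vii D\geq\Vii P$ and lets the hypothesis pin $\Vii P$ (and symmetrically $\Vii Q$) down to be exactly $0$. Isolating these three cases and, above all, recognising that $A-B$ factors as $(A-c)+(c-B)$ is the key to cutting the element~$c$.
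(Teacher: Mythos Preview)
Your proof is correct and follows the same overall strategy as the paper: the easy axioms are dispatched quickly, and transitivity is obtained by adjoining suitable elements as new test elements and case-splitting on their signs. The execution differs in two respects. The paper first translates by~$-c$ to reduce to cutting~$0$, and then adjoins only the elements of~$A$ (your~$P$) as test elements; its case analysis has just two branches, ``some $a_i\leq 0$'' versus ``all $a_i\geq 0$'', and in the latter it chains the two hypotheses by first extracting $\Vii(-B)\leq 0$ from one and then $\Vii(A-B)\leq\Vii A$ to feed into the other. You instead adjoin both $P=A-c$ and $Q=c-B$, yielding three symmetric branches, and in the ``all positive'' branch you exploit the factorisation $\Vii D=\Vii P+\Vii Q$ in the \mmonoid to pin down $\Vii P=\Vii Q=0$. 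Your route trades a little economy (more test elements) for symmetry between the two hypotheses, and your isolation of the identity $A-B=(A-c)+(c-B)$ makes the mechanism behind the cut more transparent; the paper's argument, by contrast, shows that adding only one of~$P,Q$ already suffices.
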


\begin{proof}
  The regularisation is clearly reflexive and monotone, and satisfies
  \cref{R3,R4}.

  \emph{Let us prove that the regularisation is transitive.} Suppose
  that ${A,0}\vda[\rh]B$ and $A\vda[\rh]{0,B}$: there are
  $x_1,\dots,x_m,y_1,\allowbreak\dots,y_n$ such that for every choice
  of signs~$\pm$,
  ${A-B,-B}\rh[\!\pm x_1,\dots,\pm x_m]\!0$
   and
  $A,A-B\rh[\!\pm y_1,\dots,\pm y_n]\!0$ hold.

  Let $A=\so{a_1,\dots,a_k}$. If $a_i\rh 0$ for some~$i$, then
  $A\leq_{\rh}A,0$ and $A-B\leq_{\rh}A-B,-B$. Thus
  \[
    A-B\leq_{\rh[\!-a_i,\pm x_1,\dots,\pm
      x_m]}\!A-B,-B\leq_{\rh[\!-a_i,\pm x_1,\dots,\pm
      x_m]}\!0\quad\text{for $i=1,\dots,k$.}
  \]
  If $0\rh a_1$, \dots, $0\rh a_k$, then $0\leq_{\rh}A,0$ and
  $-B\leq_{\rh}A-B,-B$. Thus
  \[
    \begin{aligned}
      -B&\leq_{\rh[\!a_1,\dots,a_k,\pm x_1,\dots,\pm x_m]}\!0\\
      A-B&\leq_{\rh[\!a_1,\dots,a_k,\pm x_1,\dots,\pm x_m]}\!A\text.
    \end{aligned}
  \]
  As $A,A-B\rh[\!\pm y_1,\dots,\pm y_n]\!0$, we have
  \[
    A-B\leq_{\rh[\!a_1,\dots,a_k,\pm x_1,\dots,\pm x_m,\pm
      y_1,\dots,\pm y_n]}\!0\text.
  \]
  All together, we conclude that
  \[
    A-B\rh[\!\pm a_1,\dots,\pm a_k,\pm x_1,\dots,\pm x_m,\pm
    y_1,\dots,\pm y_n]\!0\text.
  \]
  
  \emph{Let us prove that the regularisation is regular}, i.e.\ that
  $x+a,y+b\vda[\rh]x+b,y+a$ holds for all $a,b,x,y\in G$: it suffices
  to note that
  \[
    \begin{gathered}[b]
      \text{if $a-b\rh 0$, then $a-b,x-y,y-x,b-a\rh 0$;}\\
      \text{if $b-a\rh 0$, then $a-b,x-y,y-x,b-a\rh 0$.}
    \end{gathered}\qedhere
  \]
\end{proof}

The following lemma justifies the terminology of \cref{ideflorrel}.
One may formulate it as follows: ``regularisation leaves a regular
entailment relation unchanged''.
\begin{lemma}
  \label{lem2ithmDivLorsi}
  Let \(G\) be an ordered group and \(\vda\)~a regular entailment
  relation for~\(G\). Let \(\rh[\vda]\)~be the \si given as the
  restriction of~\(\vda\) to~\(\Pfs(G)\times G\). Then
  \(\vda\)~coincides with the regularisation of~\(\rh[\vda]\).
\end{lemma}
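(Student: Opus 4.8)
The plan is to show that regularisation leaves the regular entailment relation $\vda$ unchanged, i.e.\ that the regularisation $\vda[\rho]$ of $\rho\eqdef\rh[\vda]$ coincides with $\vda$. For $y_1,\dots,y_k\in G$, write $\rho_{y_1,\dots,y_k}$ for the \si obtained from $\rho$ by forcing $0\mathrel\rho y_1,\dots,0\mathrel\rho y_k$ (\cref{lemrhgamma}), so that, by \cref{ideflorrel}, $A\vda[\rho]B$ means that there are $x_1,\dots,x_m\in G$ with $A-B\mathrel{\rho_{\pm x_1,\dots,\pm x_m}}0$ for every choice of signs. Both $A\vda[\rho]B$ and, by \cref{ABA-B} of \cref{cor+x-x}, $A\vda B$ depend on $(A,B)$ only through the set $A-B$; hence it suffices to prove, for every $C\in\Pfs(G)$, that $C\vda[\rho]0$ is equivalent to $C\vda0$, i.e.\ to $C\mathrel\rho0$. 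One implication is immediate, taking $m=0$; the substance is the converse.

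The key step is to identify forcing in the \si with forcing in the entailment relation: I claim that, for a regular entailment relation $\vda$ with restriction $\rho$ and for $y_1,\dots,y_k\in G$,
\[
  C\mathrel{\rho_{y_1,\dots,y_k}}0\iff C\vda[y_1,\dots,y_k]0\text,
\]
where $\vda[y_1,\dots,y_k]$ denotes the regular entailment relation obtained from $\vda$ by successively forcing $0\vda y_1,\dots,0\vda y_k$ (\cref{forcingreg}). For $k=1$ this is a comparison of two descriptions of the same relation: \cref{lemrhgamma} gives $C\mathrel{\rho_y}0$ iff $C,C+y,\dots,C+py\vda0$ for some $p\geq0$, whereas \cref{forcingreg} gives $C\vda[y]0$ iff $C,C+py\vda0$ for some $p\geq0$; these agree because, for a regular entailment relation, the intermediate terms $C+y,\dots,C+(p-1)y$ may be dropped by \cref{A+px} (in contrast to the general case noted after \cref{lemrhgamma}). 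The general case follows by induction on~$k$, since forcing several elements amounts to forcing them one after another and each forcing preserves regularity (\cref{forcingreg}).

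Granting this, $C\vda[\rho]0$ is equivalent to the existence of $x_1,\dots,x_m\in G$ such that $C\vda[\epsilon_1x_1,\dots,\epsilon_mx_m]0$ for every sign vector $(\epsilon_1,\dots,\epsilon_m)$, and I deduce $C\vda0$ by induction on~$m$, the case $m=0$ being trivial. For the inductive step, fix $\epsilon_1,\dots,\epsilon_{m-1}$ and apply \cref{+x-x} to the regular entailment relation $\vda[\epsilon_1x_1,\dots,\epsilon_{m-1}x_{m-1}]$: from $C\vda[\epsilon_1x_1,\dots,\epsilon_{m-1}x_{m-1},x_m]0$ and $C\vda[\epsilon_1x_1,\dots,\epsilon_{m-1}x_{m-1},-x_m]0$ it yields $C\vda[\epsilon_1x_1,\dots,\epsilon_{m-1}x_{m-1}]0$. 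Since $\epsilon_1,\dots,\epsilon_{m-1}$ were arbitrary, the induction hypothesis applied to $x_1,\dots,x_{m-1}$ gives $C\vda0$, as desired.

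The main obstacle is the claim of the second paragraph: the whole argument hinges on being allowed to discard the intermediate shifted terms $C+y,\dots,C+(p-1)y$, which is precisely the point at which regularity is used (\cref{A+px}). Once the two notions of forcing are identified, the rest is the expected ``covering by quotients'' induction, collapsing the $2^m$ sign cases two at a time by means of \cref{+x-x} --- the systematic form of the case analysis carried out in \cref{exaideflorrel}.
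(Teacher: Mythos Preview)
Your proof is correct and follows essentially the same route as the paper's one-line argument (``a consequence of \cref{+x-x} and \cref{ABA-B} of \cref{cor+x-x}''): reduce via \cref{ABA-B} to the single-conclusion form $C\vda 0$, identify forcing in~$\rho$ with forcing in~$\vda$, and collapse the $2^m$ sign cases by iterating \cref{+x-x}. You have simply made explicit the intermediate identification $\rho_{y_1,\dots,y_k}\leftrightarrow\vda[y_1,\dots,y_k]$ that the paper leaves to the reader.
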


\begin{proof}
  This is a consequence of \cref{+x-x} and \cref{ABA-B} of
  \cref{cor+x-x}.
\end{proof}

\begin{theorem}\label{ithmDivLorsi}
  Let \(\rh\)~be an \si for an ordered group~\(G\). The
  regularisation~\(A\vda[\rh]B\) given in \cref{ideflorrel} is the
  finest regular entailment relation for~\(G\) whose restriction
  to~\(\Pfs(G)\times G\) is coarser than~\(\rh\).
  \end{theorem}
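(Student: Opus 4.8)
The plan is to establish three facts: that $\vda[\rh]$ is a regular entailment relation, that its restriction to $\Pfs(G)\times G$ is coarser than~$\rh$, and that it is the finest relation having these two features. The first fact is precisely \cref{lem1ithmDivLorsi}, so nothing new is needed there. For the second, I would argue directly from \cref{ideflorrel}: if $A\rh b$, then \equivariance gives $A-b\rh0$, and choosing the empty list of auxiliary elements (the case $m=0$, for which the clause ``for every choice of signs'' is vacuous) shows $A\vda[\rh]b$. Hence $A\rh b$ implies $A\vda[\rh]b$, which is exactly the asserted coarseness of the restriction.

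The substance lies in the finest property. Let $\vda$ be an arbitrary regular entailment relation for~$G$ whose restriction $\rh[\vda]$ to $\Pfs(G)\times G$ is coarser than~$\rh$; I must show $A\vda[\rh]B\implies A\vda B$, i.e.\ that $\vda$ is coarser than $\vda[\rh]$. My route is to prove that regularisation is monotone with respect to coarseness, and then to feed in \cref{lem2ithmDivLorsi}. Monotonicity of regularisation rests on monotonicity of forcing: reading off the description of \cref{lemrhgamma}, in which $A\rh[\!x]b$ means $A,A+x,\dots,A+px\rh b$ for some $p\ge0$, one sees that forcing sends a coarser \si to a coarser forced relation, and iterating over the list $\pm x_1,\dots,\pm x_m$ preserves this comparison for each fixed choice of signs. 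Because $A\vda[\rh]B$ asserts merely the \emph{existence} of witnesses $x_1,\dots,x_m$ making $A-B\rh[\!\pm x_1,\dots,\pm x_m]0$ hold under every sign choice, the identical witnesses serve for any coarser \si; thus the regularisation of $\rh[\vda]$ is coarser than the regularisation $\vda[\rh]$ of~$\rh$.

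Finally, \cref{lem2ithmDivLorsi} identifies $\vda$ with the regularisation of its own restriction $\rh[\vda]$; combining this with the previous step, $\vda$ is coarser than $\vda[\rh]$, which is what the finest property demands. The one genuine obstacle is the monotonicity of regularisation, and within it the point to handle with care is that the witnesses $x_1,\dots,x_m$ together with the associated sign bookkeeping must transport unchanged from the finer to the coarser \si---this is exactly what the existential, witness-based form of \cref{ideflorrel} makes possible. A more laborious alternative would avoid \cref{lem2ithmDivLorsi} and instead prove $A\vda[\rh]B\implies A\vda B$ by induction on the number~$m$ of forced elements, peeling off $\pm x_m$ and invoking \cref{+x-x} (together with the forcing description of \cref{forcingreg}) at each step; I expect this to be correct but markedly less transparent.
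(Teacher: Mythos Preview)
Your proof is correct and follows essentially the same route as the paper's own argument: invoke \cref{lem1ithmDivLorsi} for regularity, observe from the definition that the restriction is coarser than~$\rh$, and for the finest property combine monotonicity of regularisation with \cref{lem2ithmDivLorsi}. The paper compresses the monotonicity step into a single clause (``the same holds for their regularisation''), whereas you unpack it via \cref{lemrhgamma}; this is the only difference, and it is one of explicitness rather than strategy.
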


\begin{proof}
  \Cref{lem1ithmDivLorsi} tells that $\vda[\rh]$~is a regular
  entailment relation, and it is clear from its definition that its
  restriction to~$\Pfs(G)\times G$ is coarser than~$\rh$. Now
  let~$\vda$ be a regular entailment relation whose
  restriction~$\rh[\vda]$ to~$\Pfs(G)\times G$ is coarser
  than~$\rh$. Then the same holds for their regularisation, i.e., by
  \cref{lem2ithmDivLorsi}, $\vda$ is coarser than~$\vda[\rh]$.
\end{proof}

These results give rise to the following construction and \lcnamecref{ithGOEntrelGRL}, that one can find in \citealt[\S~2 and page~23]{Lor1953}.
\begin{definition}
  \label{defLorgroup}
   Let $\rh$~be an \si for an ordered group~$G$. The \emph{Lorenzen group} associated with~$\rh$ is the \grl provided by \cref{ithmDivLorsi,ithGOEntrelGRL}.
\end{definition}

\begin{comment} \label{remLorGroup}
\citet[\S~4]{Lor1939} and \citet[II, \S~2, 2]{Jaf1960} follow the Prüfer approach (see \cref{defiGRLor}) for defining the Lorenzen group associated with an \si. The present approach leading to \cref{defLorgroup} is inspired by \citet[\S~2]{Lor1953}. The two definitions are equivalent according to \cref{propGRLor}.\eoe
\end{comment}

\begin{theorem}
\label{embedLorgroup}
  Let \(\rh\)~be an \si for an ordered group~\(G\). If \(G\)~is \(\rh\)-closed, then \(G\)~embeds into the Lorenzen group associated with~\(\rh\).
\end{theorem}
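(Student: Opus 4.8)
The plan is to verify that the natural group morphism $\mor\colon G\to H$ furnished by \cref{ithGOEntrelGRL} is an embedding of ordered groups, i.e.\ an injective, order-preserving, and order-reflecting group morphism; the crux is that $\rh$-closedness is precisely the order-reflection property read off at the level of the generators of~$H$.

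First I would assemble the objects. By \cref{ithmDivLorsi} the regularisation~$\vda[\rh]$ is a regular entailment relation for~$G$, so \cref{ithGOEntrelGRL} endows the \trdi~$H$ generated by~$\vda[\rh]$ with a group law making it an \grl, together with a natural morphism $\mor\colon G\to H$ that is a group morphism; this~$H$ is the Lorenzen group of \cref{defLorgroup}. By construction $\mor$ sends $a\in G$ to its image as a generator, so that $\mor(a)=_H\Vii\so{a}=_H\Vuu\so{a}$.

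Then I would check the three properties in turn. Order-preservation: if $a\leq_G b$, then $a\vda[\rh]b$ by \cref{R3}, whence $\mor(a)\leq_H\mor(b)$ from the defining relations of~$H$ in \cref{thEntRelNB}. Order-reflection, the \emph{key step}: if $\mor(a)\leq_H\mor(b)$, i.e.\ $\Vii\so{a}\leq_H\Vuu\so{b}$, then the reflection of entailment in \cref{thEntRelNB} gives $a\vda[\rh]b$, and $\rh$-closedness (\cref{ideflorrel}) yields $a\leq_G b$; thus $\mor(a)\leq_H\mor(b)\iff a\leq_G b$. Injectivity: if $\mor(a)=_H\mor(b)$, then order-reflection gives both $a\leq_G b$ and $b\leq_G a$, so $a=_G b$ by antisymmetry of the (partial) order of~$G$.

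I do not expect a genuine obstacle here: the substantive work—that the regularisation is a regular entailment relation and that the generated lattice reflects entailment—is already done in \cref{ithmDivLorsi,ithGOEntrelGRL,thEntRelNB}. The only point requiring care is that $\rh$-closedness is hypothesised only for singletons $a\vda[\rh]b$, but this is exactly the form to which the reflection of entailment reduces the inequality $\mor(a)\leq_H\mor(b)$, so no strengthening of the hypothesis is needed.
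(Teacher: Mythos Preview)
Your argument is correct and matches the paper's implicit reasoning: the paper states \cref{embedLorgroup} without proof, treating it as immediate from \cref{ithmDivLorsi,ithGOEntrelGRL,thEntRelNB} together with the definition of $\rh$-closedness in \cref{ideflorrel}. Your unpacking of order-reflection via the reflection of entailment in \cref{thEntRelNB} is exactly the intended mechanism.
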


\subsection{The regularisation of the finest
  \si}\label{subseclcdalaLor}

We shall now give a precise description of the
regularisation~$\vda[{\rh[\mathrm s ]}]$ of the finest \si introduced
in \cref{comment-s-system1}.

\begin{lemma}
  \label{lem:exa}
  Let \((G,{\leq_G})\) be an ordered group and \(\vda\)~a regular
  entailment relation for~\(G\). Let \(a_1,\dots,a_k\in G\). If
  \begin{equation}
    n_1a_1+\dots+n_ka_k\leq_G 0\quad\text{for some integers~\(n_i\geq 0\) not all zero,}\label{eqlemmath2GOEntrelGRL02}
  \end{equation}
  then \(a_1,\dots,a_k\vda0\).
\end{lemma}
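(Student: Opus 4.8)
The plan is to transplant the case-by-case reasoning of \cref{exaideflorrel} into the abstract setting, with \cref{+x-x} playing the role of the ``right to assume the elements linearly ordered'' and \cref{forcingreg} keeping the successive forcings within the class of regular entailment relations.

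First I would reduce to a situation in which each $a_j$ carries a definite sign. Writing $A=a_1,\dots,a_k$, \cref{+x-x} applied with $x=a_1$ shows that $A\vda0$ follows once both $A\vda[a_1]0$ and $A\vda[-a_1]0$ are established. Now $\vda[-a_1]$ forces $0\vda[-a_1]-a_1$, so \equivariance (\cref{R4}) yields $a_1\vda[-a_1]0$, and monotonicity then gives $A\vda[-a_1]0$: this branch closes at once. Since $\vda[a_1]$ is again a regular entailment relation by \cref{forcingreg}, I would apply \cref{+x-x} to it with $x=a_2$, dispatch the negative branch in the same way, and iterate through $a_3,\dots,a_k$. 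After $k$ steps only the all-positive branch survives, so the claim reduces to proving $a_1,\dots,a_k\vdash0$, where $\vdash$ abbreviates the regular entailment relation $\vda[a_1,\dots,a_k]$ obtained by forcing $0\vdash a_j$ for every~$j$.

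In this last relation I would run the arithmetic of \cref{exaideflorrel}. Choose $i$ with $n_i\geq1$. From the forced entailments $0\vdash a_j$ one obtains, by \equivariance and cut, both $a_i\vdash n_ia_i$ and $n_ia_i\vdash n_1a_1+\dots+n_ka_k$, the latter by summing the $0\vdash n_ja_j$ for $j\neq i$ onto $n_ia_i$; meanwhile the hypothesis \cref{eqlemmath2GOEntrelGRL02}, being an inequality for $\leq_G$, feeds \cref{R3} to give $n_1a_1+\dots+n_ka_k\vdash0$. Chaining these three entailments by cut produces $a_i\vdash0$, whence $a_1,\dots,a_k\vdash0$ by monotonicity.

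The one point demanding care — and the only place I expect friction — is the bookkeeping of the iterated \cref{+x-x}: at each stage one must invoke \cref{forcingreg} to know that the relation being forced is still regular, and one must note that \cref{R3} remains available for \cref{eqlemmath2GOEntrelGRL02} in every coarser forced relation, since coarsening preserves \presord. The all-positive computation itself is routine and copies \cref{exaideflorrel} essentially verbatim, so no further idea is needed there.
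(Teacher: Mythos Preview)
Your proposal is correct and is essentially the paper's own proof spelled out in full: the paper's one-line argument ``This follows from the argument of \cref{exaideflorrel} because of \cref{+x-x}'' is precisely the iterated case split on the signs of the~$a_j$ via \cref{+x-x}, with the negative branches closing trivially and the all-positive branch handled by the arithmetic of \cref{exaideflorrel}. You have additionally made explicit the appeal to \cref{forcingreg} needed to keep each forced relation regular (so that \cref{+x-x} remains applicable at every stage), a point the paper leaves tacit.
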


\begin{proof}
  This follows from the argument of \cref{exaideflorrel} because of
  \cref{+x-x}.
\end{proof}

Let us write~\(A^{(n)}\) for the \(n\)\/fold sum of the set~\(A\) with
itself: \(A^{(n)}=A+\cdots+A\) (\(n\) times).

\begin{proposition}
  \label{lemmath2GOEntrelGRL0}
  Let \((G,{\leq_G})\)~be an ordered group.  T.f.a.e.\ for
  \(A\in\Pfs(G)\).
  \begin{asparaenum}
  \item\label{lemmath2GOEntrelGRL01} \(A\vda[{\rh[\mathrm s ]}]0\).
  \item\label{lemmath2GOEntrelGRL02} There is an
    integer~\(n\geq1\) such that \(A^{(n)}\rh[\mathrm{s}]0\), i.e.\
    there are \(a_1,\dots,a_k\in A\) such that
    \cref{eqlemmath2GOEntrelGRL02} holds.
  \end{asparaenum}
\end{proposition}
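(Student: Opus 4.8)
The plan is to treat the two implications separately, obtaining the implication $(\ref{lemmath2GOEntrelGRL02})\Rightarrow(\ref{lemmath2GOEntrelGRL01})$ almost for free from \cref{lem:exa} and reserving the real work for the converse. First observe that \cref{lemmath2GOEntrelGRL02} is nothing but the unfolding of ``$A^{(n)}\rh[\mathrm s]0$ for some $n\geq1$'': writing $A=\{a_1,\dots,a_k\}$, an element of $A^{(n)}$ is a sum $n_1a_1+\dots+n_ka_k$ with $\sum_in_i=n$, and $A^{(n)}\rh[\mathrm s]0$ says exactly that one such sum, with $n\geq1$ so that the $n_i\geq0$ are not all zero, is $\leq_G0$.

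For $(\ref{lemmath2GOEntrelGRL02})\Rightarrow(\ref{lemmath2GOEntrelGRL01})$, I would use that $\vda[{\rh[\mathrm s]}]$ is a regular entailment relation by \cref{lem1ithmDivLorsi}. Applying \cref{lem:exa} to it with the combination $n_1a_1+\dots+n_ka_k\leq_G0$ provided by \cref{lemmath2GOEntrelGRL02} yields $a_1,\dots,a_k\vda[{\rh[\mathrm s]}]0$; since all $a_i$ lie in $A$, monotonicity gives $A\vda[{\rh[\mathrm s]}]0$.

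For the converse I would first make the forced systems explicit. Iterating \cref{lemrhgamma} over the finest \si shows that $A$ entails $0$ in the system obtained from $\rh[\mathrm s]$ by forcing $0\rh y_1,\dots,0\rh y_m$ precisely when there are $a\in A$ and integers $j_1,\dots,j_m\geq0$ with $a+j_1y_1+\dots+j_my_m\leq_G0$. Unfolding \cref{ideflorrel} for $B=\{0\}$, so that $A-B=A$, the hypothesis $A\vda[{\rh[\mathrm s]}]0$ then furnishes \emph{one} family $x_1,\dots,x_m\in G$ such that for \emph{every} sign vector $\epsilon\in\{+1,-1\}^m$ there are $a^\epsilon\in A$ and integers $j^\epsilon_l\geq0$ with $a^\epsilon+\sum_l\epsilon_lj^\epsilon_lx_l\leq_G0$.

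The crux is then to eliminate $x_1,\dots,x_m$ one after another by forming nonnegative combinations of these relations. I would establish, by descending induction on the number of surviving unknowns, the invariant that for each sign vector on the remaining unknowns there is a relation $P+\sum_l\epsilon_lj_lx_l\leq_G0$ in which every $j_l\geq0$ and $P$ is a nonnegative integer combination of elements of $A$ of \emph{positive} total coefficient. To drop $x_m$, for each sign vector $\epsilon'$ on $x_1,\dots,x_{m-1}$ I pair the relations indexed by $(\epsilon',+)$ and $(\epsilon',-)$: if the coefficient of $x_m$ vanishes in one of them, that relation already has the required shape, and otherwise adding $j_m^{(\epsilon',-)}$ copies of the first to $j_m^{(\epsilon',+)}$ copies of the second cancels $x_m$, keeps all remaining coefficients nonnegative, and merges the two $A$-parts into one whose total coefficient stays positive. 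Once no unknown survives, the single remaining relation reads $n_1a_1+\dots+n_ka_k\leq_G0$ with the $n_i\geq0$ not all zero, which is \cref{lemmath2GOEntrelGRL02}. The main obstacle is exactly this bookkeeping: I must never let the $A$-part collapse to the empty combination, which is what the ``positive total coefficient'' clause of the invariant secures, and I must keep in mind that multiplying a relation $u\leq_G0$ by a nonnegative integer and adding such relations is legitimate in an ordered abelian group.
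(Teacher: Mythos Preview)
Your proof is correct and follows essentially the same route as the paper's: for $(\ref{lemmath2GOEntrelGRL02})\Rightarrow(\ref{lemmath2GOEntrelGRL01})$ both invoke \cref{lem:exa}, and for the converse both use \cref{lemrhgamma} to unwind the forcings and then eliminate the auxiliary elements one at a time by pairing the ``$+$'' and ``$-$'' relations and forming a nonnegative combination that cancels~$x_m$ while keeping the $A$-part nontrivial. The paper packages the latter as an induction on~$m$, applying the predicate~$\varrho$ to the expanded sets $A,A\pm x_m,\dots,A\pm p x_m$, whereas you first unfold everything to a single witness $a^{\epsilon}+\sum_l\epsilon_l j^{\epsilon}_l x_l\leq_G0$ per sign vector and then run the elimination explicitly---but the combinatorial core is identical.
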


\begin{proof}
  Let us denote Property~\cref{eqlemmath2GOEntrelGRL02} by
  $\varrho(a_1,\dots,a_k)$.
  \begin{asparaitem}
  \item[$(\ref{lemmath2GOEntrelGRL01})\implies(\ref{lemmath2GOEntrelGRL02})$.]
    \Cref{lemrhgamma} shows that if $A\rh[\epsilon_1x_1,\dots,\epsilon_mx_m]0$, then $A,{A+\epsilon_mx_m},\dots,{A+p\epsilon_mx_m}\rh[\epsilon_1x_1,\dots,\epsilon_{m-1}x_{m-1}]0$ for some integer~$p$. One may therefore proceed by induction on~$m$. Firstly, it is clear that $a_1,\dots,a_k\rh[\mathrm s ]0$ implies
    that $\varrho(a_1,\dots,a_k)$ holds. Secondly, suppose that for
    some integers $p$ and~$q$,
    \[
      \begin{aligned}
        &\varrho(a_1,\dots,a_k,a_1+x_m,\dots,a_k+x_m,\dots,a_1+px_m,\dots,a_k+px_m)\text{ and}\\
        &\varrho(a_1,\dots,a_k,a_1-x_m,\dots,a_k-x_m,\dots,a_1-qx_m,\dots,a_k-qx_m)\text{ hold.}
      \end{aligned}
    \]
    Let us show that $\varrho(a_1,\dots,a_k)$ holds. The hypothesis implies that
    there are integers~$n_i,n\geq 0$, at least one~$n_i$ nonzero, such
    that $n_1a_1+\dots+n_ka_k+nx_m\leq_G0$, and integers~$n'_i,n'\geq 0$,
    at least one $n'_i$ nonzero, such that
    $n'_1a_1+\dots+n'_ka_k-n'x_m\leq_G0$. If $n=0$ or if $n'=0$, then we are
    done; otherwise, ${(n'n_1+nn'_1)a_1}+\dots+{(n'n_k+nn'_k)a_k}\leq_G0$
    with at least one~$n'n_i+n n'_i$ nonzero.
  \item[($\ref{lemmath2GOEntrelGRL02})\implies(\ref{lemmath2GOEntrelGRL01})$.]
    This follows from \cref{lem:exa}.\qedhere
  \end{asparaitem}
\end{proof}

\begin{corollary}\label{th2GOEntrelGRL0}
  Let \(G\)~be an ordered group.  The regularisation of the finest \si
  for~\(G\) is the finest regular entailment relation for~\(G\).
\end{corollary}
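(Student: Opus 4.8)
The plan is to derive this corollary directly from \cref{ithmDivLorsi}, together with the characterisation of the finest \si supplied by \cref{comment-s-system1}. By \cref{ithmDivLorsi}, the regularisation $\vda[{\rh[\mathrm s]}]$ is the finest regular entailment relation for~$G$ whose restriction to~$\Pfs(G)\times G$ is coarser than~$\rh[\!\mathrm s]$. So the only thing left to observe is that this qualifying condition is satisfied automatically by \emph{every} regular entailment relation; once that is established, \gui{finest regular entailment relation whose restriction is coarser than~$\rh[\!\mathrm s]$} reduces to \gui{finest regular entailment relation} tout court, which is exactly the assertion.

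To see that the condition is vacuous, I would take an arbitrary regular entailment relation~$\vda$ for~$G$ and consider its restriction~$\rh[\vda]$ to~$\Pfs(G)\times G$. By \cref{defregsysideals}, this restriction is a regular \si, hence in particular an \si (the restriction of \cref{R0,R1,R2,R3,R4} to a singleton right-hand side yields \cref{S0,S1,S2,S3,S4}, using monotonicity to recover cut in the form~\cref{S2}). Now \cref{comment-s-system1} identifies~$\rh[\!\mathrm s]$ as the finest \si for~$G$, i.e.\ every \si is coarser than~$\rh[\!\mathrm s]$. Applying this to~$\rh[\vda]$ shows that $\rh[\vda]$ is coarser than~$\rh[\!\mathrm s]$, which is precisely the qualifying condition.

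Combining the two observations finishes the argument: since the restriction of every regular entailment relation to~$\Pfs(G)\times G$ is coarser than~$\rh[\!\mathrm s]$, the qualification appearing in \cref{ithmDivLorsi} holds for all regular entailment relations indiscriminately, and therefore the finest regular entailment relation satisfying it is simply the finest regular entailment relation for~$G$; that is, $\vda[{\rh[\mathrm s]}]$. I do not expect any genuine obstacle here: the corollary is an almost immediate repackaging of \cref{ithmDivLorsi} and \cref{comment-s-system1}, and the only point requiring (routine) care is the remark that specialising the axioms of a regular entailment relation to singleton right-hand sides returns exactly the axioms of an \si, so that the finest \si~$\rh[\!\mathrm s]$ dominates every such restriction.
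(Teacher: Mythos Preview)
Your argument is correct, and it takes a genuinely different route from the paper's own proof. The paper argues via the explicit characterisation of~$\vda[{\rh[\mathrm s]}]$ obtained in \cref{lemmath2GOEntrelGRL0}: for any regular entailment relation~$\vda$, if $A\vda[{\rh[\mathrm s]}]B$ then (by \cref{ABA-B} of \cref{cor+x-x}) $A-B\vda[{\rh[\mathrm s]}]0$, which by \cref{lemmath2GOEntrelGRL0} means some positive integer combination of elements of~$A-B$ is $\leq_G0$, and then \cref{lem:exa} yields $A-B\vda0$, whence $A\vda B$. Your approach bypasses this explicit description entirely and instead invokes the structural result \cref{ithmDivLorsi} together with the universal property of~$\rh[\!\mathrm s]$ from \cref{comment-s-system1}; the only verification needed is the routine one that restricting a regular entailment relation to singleton right-hand sides yields an \si, which you handle correctly (in particular the reduction of the multi-conclusion cut~\cref{R2} to the single-conclusion cut~\cref{S2} via monotonicity). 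Your route is more conceptual and shows that the corollary does not actually rely on the concrete computation in \cref{lemmath2GOEntrelGRL0}; the paper's route, on the other hand, makes the result self-contained within \cref{subseclcdalaLor} and exploits the explicit description that is needed anyway for \cref{corlem1thGOEntrelGRL} and \cref{ithmgogrlfree}.
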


\begin{proof}
  This follows from \Cref{lem:exa,lemmath2GOEntrelGRL0} because of
  \cref{ABA-B} of \cref{cor+x-x}.
\end{proof}

\begin{corollary}\label{sclosed}
  An ordered group~\((G,{\leq_G})\) is \(\rh[\mathrm s ]\)-closed if and only if
  \begin{equation}
    \text{\(0\leq_Gna\) implies \(0\leq_Ga\)\quad(\(a\in G\), \(n>1\)).}\label{eq:3}
  \end{equation}
\end{corollary}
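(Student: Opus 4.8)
The plan is to unfold the definition of $\rh[\mathrm s]$-closedness and reduce it, by means of the characterisation of $\vda[{\rh[\mathrm s]}]$ already established, to the numerical condition~\cref{eq:3}. By \cref{ideflorrel}, the group $G$ is $\rh[\mathrm s]$-closed exactly when $a\vda[{\rh[\mathrm s]}]b\implies a\leq_Gb$ for all $a,b\in G$. Since $\vda[{\rh[\mathrm s]}]$ is a regular entailment relation (\cref{lem1ithmDivLorsi}; it is moreover the finest one by \cref{th2GOEntrelGRL0}), \cref{ABA-B} of \cref{cor+x-x} lets me replace the two-element relation $a\vda[{\rh[\mathrm s]}]b$ by the single-conclusion statement $a-b\vda[{\rh[\mathrm s]}]0$. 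Thus $\rh[\mathrm s]$-closedness amounts to the implication ${a-b\vda[{\rh[\mathrm s]}]0}\implies{a\leq_Gb}$ for all $a,b\in G$.

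Next I would specialise \cref{lemmath2GOEntrelGRL0} to the singleton $A=\{a-b\}$. Its equivalence $(\ref{lemmath2GOEntrelGRL01})\Leftrightarrow(\ref{lemmath2GOEntrelGRL02})$ then reads: $\{a-b\}\vda[{\rh[\mathrm s]}]0$ holds if and only if there is an integer $n\geq1$ with $n(a-b)\leq_G0$. Indeed, for a one-element set the only instance of \cref{eqlemmath2GOEntrelGRL02} that is available is $n_1(a-b)\leq_G0$ with $n_1\geq1$, so the condition $A^{(n)}\rh[\mathrm s]0$ collapses to exactly this. Combining with the previous paragraph, $G$ is $\rh[\mathrm s]$-closed precisely when, for all $a,b\in G$, the existence of some $n\geq1$ with $n(a-b)\leq_G0$ forces $a-b\leq_G0$, i.e.\ $a\leq_Gb$.

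Finally I would translate this into the stated form. Writing $c=a-b$, which ranges over all of~$G$ because $G$ is a group, the condition becomes: for every $c\in G$, $(\exists\,n\geq1:\ nc\leq_G0)\implies c\leq_G0$; since the premise is existential in~$n$ while the conclusion does not involve~$n$, this is equivalent to requiring, for every $c\in G$ and every $n\geq1$, that $nc\leq_G0\implies c\leq_G0$. The substitution $c=-a$ together with the translation invariance of~$\leq_G$ turns $nc\leq_G0$ into $0\leq_Gna$ and $c\leq_G0$ into $0\leq_Ga$; as the case $n=1$ is vacuous, this is exactly~\cref{eq:3}. Running the same chain of equivalences backwards yields the converse, giving the full ``if and only if''.

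The argument is essentially bookkeeping, and I expect no genuine obstacle. The only points that need care are checking that $\vda[{\rh[\mathrm s]}]$ is regular so that \cref{cor+x-x} may be invoked, verifying that \cref{lemmath2GOEntrelGRL0} specialises cleanly to singletons, and correctly handling the sign change $c=-a$ alongside the passage from ``$n\geq1$'' to ``$n>1$'' through the trivial case $n=1$.
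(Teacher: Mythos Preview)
Your proof is correct and follows the route the paper leaves implicit: the corollary is stated without proof, as an immediate consequence of \cref{lemmath2GOEntrelGRL0} applied to a singleton. One minor remark: you invoke regularity and \cref{ABA-B} of \cref{cor+x-x} to pass from $a\vda[{\rh[\mathrm s]}]b$ to $a-b\vda[{\rh[\mathrm s]}]0$, but for singletons this reduction already follows from equivariance~\cref{R4} alone, so the appeal to regularity is not strictly needed here.
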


\begin{corollary}\label{corlem1thGOEntrelGRL}
  Let \((G,{\leq_G})\)~be an ordered group. T.f.a.e.
  \begin{asparaenum}
  \item \(A\vda[{\rh[\mathrm s ]}]B\).
  \item\label{corlem1thGOEntrelGRL2} There is an integer~\(n\geq1\)
    such that for some elements~\(a^{(n)}\in A^{(n)}\)
    and~\(b^{(n)}\in B^{(n)}\), \(a^{(n)}\leq_Gb^{(n)}\) holds.
  \end{asparaenum}
\end{corollary}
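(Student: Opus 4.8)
The plan is to reduce the general statement to the special case with a single~$0$ on the right, which is exactly \cref{lemmath2GOEntrelGRL0}, and then to translate between iterated sums of differences and differences of iterated sums. Throughout I use that $\vda[{\rh[\mathrm s]}]$ is a regular entailment relation (by \cref{lem1ithmDivLorsi}, or by \cref{th2GOEntrelGRL0}), so that the reformulations of \cref{cor+x-x} apply to it.

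First I would invoke \cref{ABA-B} of \cref{cor+x-x}. Since $A-B\in\Pfs(G)$, this gives
\[
  A\vda[{\rh[\mathrm s]}]B\iff A-B\vda[{\rh[\mathrm s]}]0\text,
\]
so it suffices to characterise $A-B\vda[{\rh[\mathrm s]}]0$. Applying \cref{lemmath2GOEntrelGRL0} to the set $A-B$, this relation holds if and only if there is an integer $n\geq1$ with $(A-B)^{(n)}\rh[\mathrm s]0$. The next step is the key observation that, as subsets of~$G$,
\[
  (A-B)^{(n)}=A^{(n)}-B^{(n)}\text.
\]
Indeed, since $G$~is commutative, an $n$\/fold sum $(a_1-b_1)+\dots+(a_n-b_n)$ with all $a_i\in A$ and $b_i\in B$ rearranges to $(a_1+\dots+a_n)-(b_1+\dots+b_n)$; conversely every $a^{(n)}-b^{(n)}$ with $a^{(n)}\in A^{(n)}$ and $b^{(n)}\in B^{(n)}$ arises this way, the pairing of summands being unconstrained.

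Finally, because $\rh[\mathrm s]$~is order-reflecting (\cref{comment-s-system1}), the relation $(A-B)^{(n)}\rh[\mathrm s]0$ says precisely that some element of $A^{(n)}-B^{(n)}$ is $\leq_G0$, that is, that there are $a^{(n)}\in A^{(n)}$ and $b^{(n)}\in B^{(n)}$ with $a^{(n)}-b^{(n)}\leq_G0$, equivalently $a^{(n)}\leq_Gb^{(n)}$. Chaining these equivalences yields the corollary. The argument is essentially routine; the only point demanding a moment's care is the set identity $(A-B)^{(n)}=A^{(n)}-B^{(n)}$, which I expect to be the main (and only) obstacle, and which rests solely on commutativity of addition.
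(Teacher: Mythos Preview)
Your proof is correct and follows the same overall route as the paper: reduce $A\vda[{\rh[\mathrm s]}]B$ to $A-B\vda[{\rh[\mathrm s]}]0$ via \cref{ABA-B} of \cref{cor+x-x}, then invoke \cref{lemmath2GOEntrelGRL0}. The one difference lies in the final bookkeeping. The paper works in coefficient form and appeals to the Riesz refining lemma to show that an inequality $\sum_i n_ia_i\leq_G\sum_j m_jb_j$ with $\sum n_i=\sum m_j$ can be rewritten as $\sum_{i,j} p_{ij}(a_i-b_j)\leq_G0$ for suitable $p_{ij}\geq0$; you instead work directly with the $n$-fold sum sets and observe $(A-B)^{(n)}=A^{(n)}-B^{(n)}$. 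This is the same fact stated in ``list'' rather than ``multiplicity'' form, and it becomes trivial because one may pair the $n$ summands from~$A$ with the $n$ summands from~$B$ arbitrarily. So your version is marginally more elementary in that it sidesteps the citation of the Riesz lemma; the paper's version has the advantage of displaying explicitly the characterisation in terms of integer coefficients that is used in \cref{ithmgogrlfree}.
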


\begin{proof}
  Suppose that there are \(a_1,\dots,a_k,b_1,\dots,b_\ell\in G\) such
  that
  $a^{(n)}=n_1a_1+\dots+n_ka_k\leq_Gb^{(n)}=m_1b_1+\dots+m_\ell
  b_\ell$ with integers~\(n_i,m_j\geq0\) such that
  \(n_1+\dots+n_k=m_1+\dots+m_\ell=n\). By the Riesz refining lemma
  \citep[see e.g.][Theorem XI-2.11]{CACM}, there are
  integers~$p_{ij}\geq 0$ such that $n_i=\sum_{j=1}^\ell p_{ij}$ for
  each~$i$ and $m_j=\sum_{i=1}^kp_{ij}$ for each~$j$, so that
  \cref{corlem1thGOEntrelGRL2} may be written
  \[
    \ndsp\sum_{i=1}^k\sum_{j=1}^\ell p_{ij}(a_i-b_j)\leq_G0\text{ for
      some integers $p_{ij}\geq0$ not all zero.}
  \]
  The equivalence follows by \cref{lemmath2GOEntrelGRL0}.
\end{proof}

\subsection{The \grl freely generated by an ordered
  group}\label{sec:lorenz-cliff-dieud}

As an application, we provide the following description for the \grl
freely generated by an ordered group.

\begin{theorem}\label{ithmgogrlfree}
  For every ordered group~\(G\) we can construct an \grl~\(H\) with a
  morphism~\(\mor\colon G\to H\) such that \(0\leq_H\mor(a)\) holds if
  and only if \(0\leq_Gna\) for some~\(n\geq1\). More precisely, \(H\)
  is the \grl freely generated by~\(G\) (in the sense of the left
  adjoint functor of the forgetful functor) and can be constructed as
  the Lorenzen group associated with the finest \si, characterised by:
  \({\mor(a_1)\vii\dots\vii\mor(a_k)}\leq_H\mor(b_1)\vuu\dots\vuu\mor(b_\ell)\)
  holds if and only if there are
  integers~\(n_1,\dots,n_k,m_1,\dots,m_\ell\geq 0\) with
  \(n_1+\cdots+n_k=m_1+\dots+m_\ell\geq1\) such that
  \(n_1a_1+\dots+n_ka_k\leq_Gm_1b_1+\dots+m_\ell b_\ell\).
\end{theorem}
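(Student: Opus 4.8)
The plan is to identify~$H$ with the Lorenzen group of the finest \si and then read off the order description and the positivity criterion from results already available, the one substantive clause being the universal property. By \cref{comment-s-system1} the finest \si for~$G$ is~$\rh[\mathrm s]$, and by \cref{defLorgroup} its Lorenzen group is the \grl~$H$ that \cref{ithGOEntrelGRL} builds from the regularisation~$\vda[{\rh[\mathrm s]}]$, together with the canonical group morphism~$\mor\colon G\to H$. The key simplification is \cref{th2GOEntrelGRL0}, by which $\vda[{\rh[\mathrm s]}]$ is the \emph{finest} regular entailment relation for~$G$; I use this both for the order description and for freeness.

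The order description follows by combining reflection with the arithmetic of~$\vda[{\rh[\mathrm s]}]$. The reflection of entailment in \cref{thEntRelNB} turns ${\mor(a_1)\vii\dots\vii\mor(a_k)}\leq_H{\mor(b_1)\vuu\dots\vuu\mor(b_\ell)}$ into $a_1,\dots,a_k\vda[{\rh[\mathrm s]}]b_1,\dots,b_\ell$, and \cref{corlem1thGOEntrelGRL} rewrites this as the existence of integers $n_i,m_j\geq0$ with $n_1+\dots+n_k=m_1+\dots+m_\ell\geq1$ and $n_1a_1+\dots+n_ka_k\leq_Gm_1b_1+\dots+m_\ell b_\ell$. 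The positivity statement is the instance $A=\so0$, $B=\so a$: then $A^{(n)}=\so0$ and $B^{(n)}=\so{na}$, so the criterion collapses to $0\leq_Gna$ for some~$n\geq1$, which is exactly $0\leq_H\mor(a)$.

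It remains to prove that~$H$ is the \grl freely generated by~$G$, which I expect to be the main obstacle. Let~$H'$ be an \grl and $\psi\colon G\to H'$ a morphism of ordered groups; as in the construction preceding \cref{defregsysideals}, it yields a regular entailment relation $A\vda[\psi]B\equidef\Vii\psi(A)\leq_{H'}\Vuu\psi(B)$. Since $\vda[{\rh[\mathrm s]}]$ is the finest one, it refines~$\vda[\psi]$; restricting to~$\Pfs(G)\times G$, the assignment $a\mapsto\psi(a)$ therefore respects the associated \si and extends to a morphism from the meet-monoid of ideals of~$\vda[{\rh[\mathrm s]}]$ into~$H'$. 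As \cref{ithGOEntrelGRL} realises~$H$ as the Grothendieck \grl of that meet-monoid, which is free by \cref{thGoembedsGrl2} of \cref{thGoembedsGrl}, this extends uniquely to an \grl morphism $H\to H'$ over~$\psi$, and uniqueness holds because $\mor(G)$ generates~$H$. Every other step is a direct citation; here one must verify that the finest regular entailment relation dominates the relation coming from~$\psi$ and that the resulting lattice map is genuinely a group morphism, which is exactly what the Grothendieck-\grl freeness provides.
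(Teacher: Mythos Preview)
Your proof is correct. For the order description and the positivity criterion you proceed exactly as the paper does, citing \cref{thEntRelNB} and \cref{corlem1thGOEntrelGRL}. For freeness you take a different route: the paper composes two left adjoints---\cref{comment-s-system1} gives that the monoid of ideals of~$\rh[\!\mathrm s]$ is the \mmonoid freely generated by~$G$, and \cref{thGoembedsGrl2} of \cref{thGoembedsGrl} gives that its Grothendieck \grl is the \grl freely generated by that \mmonoid, so the composite is the \grl freely generated by~$G$---whereas you use \cref{th2GOEntrelGRL0} to show that $\vda[{\rh[\mathrm s]}]$ refines every regular \entrel induced by a morphism~$\psi$ into an \grl, manufacture the meet-monoid morphism by hand, and only then invoke Grothendieck freeness. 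The paper's argument is a one-line categorical composition; yours is more operational but has the virtue of working directly with the meet-monoid of the restriction of~$\vda[{\rh[\mathrm s]}]$, which is the one that actually appears in the proof of \cref{ithGOEntrelGRL}, so that no identification between two a~priori different Grothendieck \grls is required.
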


\Cref{ithmgogrlfree} is in fact a reformulation of the following
proposition, enriched with an account of
\cref{th2GOEntrelGRL0,corlem1thGOEntrelGRL,sclosed}.

\begin{proposition}
  Let \((G,{\leq_G})\)~be an ordered group. The Lorenzen group
  associated with the finest \si for~\(G\) is the \grl freely
  generated by~\((G,{\leq_G})\) (in the sense of the left adjoint
  functor of the forgetful functor).
\end{proposition}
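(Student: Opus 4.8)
The plan is to verify directly the universal property of the \grl freely generated by $(G,{\leq_G})$, chaining together the universal properties already established. Write $\vda$ for the regularisation $\vda[{\rh[\mathrm s ]}]$ of the finest \si~$\rh[\!\mathrm s]$ (\cref{comment-s-system1}); by \cref{th2GOEntrelGRL0} it is the finest regular entailment relation for~$G$, and the Lorenzen group of~$\rh[\!\mathrm s]$ (\cref{defLorgroup}) is the \grl~$H$ produced from~$\vda$ by \cref{ithGOEntrelGRL}, with natural morphism $\iota\colon G\to H$. The point I would lean on is the dual description of~$H$ extracted from the proof of \cref{ithGOEntrelGRL}: $H$ is simultaneously the \trdi generated by~$\vda$ and the Grothendieck \grl of the cancellative \mmonoid~$M$ associated with the restriction of~$\vda$ to $\Pfs(G)\times G$. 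I would work with the latter, since \cref{thGoembedsGrl2} of \cref{thGoembedsGrl} already exhibits~$H$ as the \grl freely generated by~$M$; this is what hands us the group structure of the factorisation without extra effort.

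So let $H'$ be an \grl and $f\colon G\to H'$ a morphism of ordered groups. First I would attach to~$f$ the relation $A\vda[f]B\equidef\Vii f(A)\leq_{H'}\Vuu f(B)$, which is an unbounded entailment relation satisfying \cref{R3,R4,R5}, hence a regular entailment relation. As $\vda$ is finest, $A\vda B$ implies $A\vda[f]B$. Next I would promote~$f$ to a morphism of \mmonoids $g\colon M\to H'$ sending $\Vii\iota(A)\mapsto\Vii f(A)$: it respects addition because $f$ is a group morphism and meet distributes over addition in~$H'$, and it respects order because, by \cref{thSemiEntRelNB}, $\Vii\iota(A)\leq_M\Vii\iota(B)$ amounts to $A\vda b$ for every $b\in B$, whence $A\vda[f]b$ and $\Vii f(A)\leq_{H'}f(b)$ for each such~$b$, so that $\Vii f(A)\leq_{H'}\Vii f(B)$.

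It then remains to apply \cref{thGoembedsGrl2} of \cref{thGoembedsGrl}: since $H$ is the Grothendieck \grl of~$M$, the \mmonoid morphism~$g$ extends uniquely to an \grl morphism $\bar f\colon H\to H'$, and $\bar f\circ\iota=f$ because $\iota(a)=\Vii\iota(\{a\})$ is sent by~$g$ to~$f(a)$. Uniqueness follows from the same universal property, $\iota(G)$ generating~$M$ under~$\vii$ and~$+$ and $M$ generating~$H$ as a group. Finally $\iota$ is itself a morphism of ordered groups: a group morphism by \cref{ithGOEntrelGRL}, and order-preserving by \cref{R3}. This yields the universal property and identifies~$H$ with the \grl freely generated by~$(G,{\leq_G})$.

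The only genuine obstacle I anticipate is conceptual rather than computational: making sure the factorisation~$\bar f$ is a morphism of \grls and not merely of distributive lattices. Invoking the lattice universal property of \cref{thEntRelNB} on its own would produce only a lattice homomorphism; the remedy, and the crux of the argument, is to route~$\bar f$ through the Grothendieck \grl of~$M$ so that \cref{thGoembedsGrl2} of \cref{thGoembedsGrl} delivers a bona fide \grl morphism. Everything else — that $\vda[f]$ is regular, that $g$ preserves addition and order, that $\iota$ is an ordered-group morphism — is routine once \cref{th2GOEntrelGRL0} and the dual description of~$H$ are in hand.
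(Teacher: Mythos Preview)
Your proof is correct, but it follows a different path from the paper's. The paper's three-line argument composes two left adjoints already on record: by \cref{comment-s-system1} the monoid of ideals~$M_{\mathrm s}$ attached to~$\rh[\!\mathrm s]$ is the meet-monoid freely generated by~$(G,{\leq_G})$, and by \cref{thGoembedsGrl2} of \cref{thGoembedsGrl} its Grothendieck \grl is the \grl freely generated by~$M_{\mathrm s}$; chaining the two gives the free \grl on~$G$ directly. (The identification of this Grothendieck \grl with the Lorenzen group is left implicit: it holds because passing to the Grothendieck \grl already kills the failure of cancellativity, so one lands on the same object whether one starts from~$M_{\mathrm s}$ or from the cancellative meet-monoid attached to the restriction of~$\vda[{\rh[\!\mathrm s]}]$.)

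You instead work from the outset with the cancellative meet-monoid~$M$ attached to the restriction of the regularisation, and verify the universal property by hand: any $f\colon G\to H'$ induces a regular \entrel coarser than the finest one (\cref{th2GOEntrelGRL0}), hence a meet-monoid morphism $M\to H'$, hence an \grl morphism $H\to H'$ via \cref{thGoembedsGrl2} of \cref{thGoembedsGrl}. This is longer but more self-contained: you neither invoke the freeness of~$M_{\mathrm s}$ from \cref{comment-s-system1} nor rely on the tacit identification above, and your worry about obtaining an \grl morphism rather than a mere lattice morphism is well placed and cleanly resolved by routing through the Grothendieck construction. The paper's approach buys brevity by leaning on the adjoint composite; yours buys transparency by making the factorisation explicit.
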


\begin{proof}
  The finest monoid of ideals for \(G\) is the meet-monoid~$M$ freely
  generated by~\(G\), and its Grothendieck \grl~$H$ is the \grl freely
  generated by~$M$: therefore $H$~is the \grl freely generated
  by~\(G\) as a monoid, and therefore also as a group.
\end{proof}

\Cref{ithmgogrlfree} may be seen as a generalisation of the following
corollary, the constructive core of the classical
Lo\-ren\-zen-Clif\-ford-Dieu\-don\-né theorem.

\begin{corollary}[Lorenzen-Clifford-Dieudonné, see {\citealp[Satz~14
    for the $\mathrm{s}$-\scentrel]{Lor1939};
    \citealp[Theorem~1]{Cli1940};
    \citealp[Section~1]{Die1941}}]\label{corthmgogrlfree}
  The ordered group~\((G,{\leq_G})\)~is embeddable into an \grl if and
  only if
  \begin{equation*}
    \text{\(0\leq_Gna\) implies \(0\leq_Ga\)\quad(\(a\in G\), \(n>1\)).}  \tag{\ref{eq:3}}\label{re:eq:3}
  \end{equation*}
\end{corollary}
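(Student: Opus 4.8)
The plan is to handle the two implications separately, with \cref{ithmgogrlfree} carrying the weight of the harder direction. For the necessity of the displayed condition, suppose $G$ embeds into an \grl~$L$ through an embedding $\iota\colon G\to L$, i.e.\ an injective, order-reflecting group morphism. I would invoke the elementary fact that in any \grl, $0\leq_L n\xi$ with $n\geq1$ implies $0\leq_L\xi$: writing $\xi=\xi^+-\xi^-$ with $\xi^+,\xi^-\geq_L0$ and $\xi^+\vii\xi^-=0$, the multiples $n\xi^+$ and $n\xi^-$ are again disjoint, so $n\xi^-$ is the negative part of $n\xi$; if $n\xi\geq_L0$ this forces $n\xi^-=0$, and torsion-freeness of \grls gives $\xi^-=0$, i.e.\ $\xi\geq_L0$. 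Granting this, take $a\in G$ with $0\leq_G na$: then $0\leq_L\iota(na)=n\iota(a)$ since $\iota$ is a group morphism, hence $0\leq_L\iota(a)$, and order-reflection yields $0\leq_G a$, which is exactly \eqref{eq:3}.

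For the sufficiency, suppose \eqref{eq:3} holds and feed it into \cref{ithmgogrlfree}: the morphism $\mor\colon G\to H$ into the \grl freely generated by~$G$ satisfies $0\leq_H\mor(a)$ if and only if $0\leq_G na$ for some $n\geq1$. Under \eqref{eq:3} the right-hand condition is equivalent to $0\leq_G a$, the nontrivial implication being \eqref{eq:3} itself and the other one holding because $0\leq_G a$ entails $0\leq_G na$. Hence $0\leq_H\mor(a)\iff0\leq_G a$ for every $a\in G$. Since $\mor$ is a group morphism, replacing $a$ by $b-a$ upgrades this to $\mor(a)\leq_H\mor(b)\iff a\leq_G b$, so $\mor$ is order-reflecting; and an order-reflecting group morphism is injective, because $\mor(a)=\mor(b)$ gives both $a\leq_G b$ and $b\leq_G a$, whence $a=_G b$. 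Thus $\mor$ is an embedding of $G$ into the \grl~$H$.

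The only genuine input is the property of the positive cone used in the first step; everything else is a formal unwinding of the characterisation supplied by \cref{ithmgogrlfree}. I note that the converse could equally be derived without invoking \cref{ithmgogrlfree}, by combining \cref{sclosed}, which recasts \eqref{eq:3} as $\rh[\mathrm s]$-closedness, with \cref{embedLorgroup} and the preceding proposition identifying the Lorenzen group of the finest \si as the \grl freely generated by~$G$.
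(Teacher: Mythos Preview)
Your proof is correct and follows the same route as the paper's: necessity is the elementary \grl fact (which the paper simply calls ``clearly necessary'' while you spell out the $\xi^+,\xi^-$ argument), and sufficiency is read off \cref{ithmgogrlfree} exactly as the paper does, by turning the characterisation of $0\leq_H\mor(a)$ into order-reflection and hence injectivity. Your closing remark about the alternative via \cref{sclosed} and \cref{embedLorgroup} is apt but not a different argument---it is the same content repackaged.
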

%
\begin{proof}
  The condition is clearly necessary. \Cref{ithmgogrlfree} shows that
  it yields the injectivity of the morphism \hbox{$\mor\colon G\to H$}
  as well as the fact that $\mor(x)\leq_H \mor(y)$ implies $x\leq_Gy$.
\end{proof}
\begin{comments}
  \phantomsection\label{comment-s-system}
  \begin{asparaenum}
  \item In each of the three references given in
    \cref{corthmgogrlfree}, the authors invoke a maximality argument
    for showing that $G$~embeds in fact into a direct product of
    linearly ordered groups. The goal of \citet[\S~4;
    {\citeyear{Lor1953}}]{Lor1950} is to avoid the necessarily
    nonconstructive reference to linear orders in conceiving
    embeddings into an \grl, and this endeavour culminates in
    \cref{corthGOEntrelGRL}.
  \item The reader will recognise Condition~\cref{re:eq:3} of
    $\rh[\!\mathrm s]$-closed\-ness of \cref{sclosed} in the condition
    of embeddability stated here.  In fact, in his Ph.D.\ thesis
    \citeyearpar{Lor1939}, \citeauthor{Lor1939} proves
    \cref{corthmgogrlfree} as a side-product of his enterprise of
    generalising the concepts of multiplicative ideal theory to the
    framework of preordered groups.  He is following the Prüfer
    approach presented in \cref{secGoembedsGrl}, in which
    $\rh[\!\mathrm s]$-closedness is being introduced according to
    \cref{def-closed} and the equivalence with
    Condition~\cref{re:eq:3} is easy to check (see
    \citealp[page~358]{Lor1939}, or \citealp[I, \S~4,
    Théorème~2]{Jaf1960}).\eoe
  \end{asparaenum}
\end{comments}

\subsection{The regularisation of the system of Dedekind ideals}\label{subsecaritalaLor}

Let us resume \cref{sec:forc-posit-an-1} with a crucial lemma.

\begin{lemma}\label{lemLordivgroup}
  One has \(A\vda[{\rh[\mathrm d ]}]1\) if and only if
  \(\gen{A}_{\id[A]}\ni 1\).
\end{lemma}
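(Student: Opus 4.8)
The plan is to translate the abstract relation $A\vda[{\rh[\mathrm d]}]1$ into a statement about fractional ideals and then read off the equivalence. First I would unfold \cref{ideflorrel} in the case $B=\so1$, where $A-B=A$ (subtraction of the neutral element): it says that $A\vda[{\rh[\mathrm d]}]1$ holds exactly when there are $x_1,\dots,x_m\in G$ such that $A\,(\rhd_{\mathrm d})_{\epsilon_1x_1,\dots,\epsilon_mx_m}1$ for \emph{every} sign vector $\epsilon\in\{+,-\}^m$. Iterating \cref{forcing-dedekind} (forcing the elements one after another, a representative of $+x_i$ adjoining $x_i$ and one of $-x_i$ adjoining $x_i^{-1}$) identifies each forced system with the system of Dedekind ideals of the extension $\id[x_1^{\epsilon_1},\dots,x_m^{\epsilon_m}]$ (abbreviated $\id[x^\epsilon]$); hence the relation means $1\in\gen{A}_{\id[x^\epsilon]}$. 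Thus $A\vda[{\rh[\mathrm d]}]1$ is equivalent to the existence of $x_1,\dots,x_m$ with $1\in\gen{A}_{\id[x^\epsilon]}$ for all $2^m$ choices of~$\epsilon$.

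This reformulation makes the implication $\gen{A}_{\id[A]}\ni1\Rightarrow A\vda[{\rh[\mathrm d]}]1$ transparent: write $A=\so{a_1,\dots,a_k}$ and take $m=k$, $x_i=a_i$. If some $\epsilon_i=-$, then $a_i^{-1}$ lies in the extension and $1=a_i\cdot a_i^{-1}\in\gen{A}_{\id[x^\epsilon]}$ holds for free; the only remaining sign vector is $(+,\dots,+)$, for which $\id[x^\epsilon]=\id[A]$ and the hypothesis applies verbatim. All $2^k$ instances thus hold, giving $A\vda[{\rh[\mathrm d]}]1$.

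The converse is the substantial direction and the expected obstacle. Its conceptual content is the valuative criterion for integral dependence: for any valuation $v$ of~$K$ nonnegative on~$\id$, choosing $\epsilon_j$ so that $x_j^{\epsilon_j}$ belongs to the valuation ring places $\id[x^\epsilon]$ inside it, so a relation $1=\sum_i a_ir_i$ with the $r_i$ in $\id[x^\epsilon]$ forces $\min_i v(a_i)\le v(1)=0$; as this holds for every such~$v$, the element~$1$ is integral over the ideal $\gen{A}_\id$. I would then convert integrality into the stated membership through the identity $\id[A]=\id+\sum_{n\ge1}\gen{A^{(n)}}_\id$ (as an $\id$-module the ring generated by~$A$ is the sum of the ideal powers $\gen{A^{(n)}}_\id$), which gives $\gen{A}_{\id[A]}=\sum_{n\ge1}\gen{A^{(n)}}_\id$; thus $1\in\gen{A}_{\id[A]}$ is equivalent to $1\in\gen{A^{(1)}}_\id+\dots+\gen{A^{(N)}}_\id$ for some~$N$, i.e.\ to an integral equation $1+c_1+\dots+c_N=0$ with $c_n\in\gen{A^{(n)}}_\id$, and hence to $1$ being integral over $\gen{A}_\id$.

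The delicate point is that the step above covers~$\id$ by valuation rings, which is unavailable constructively, so the valuative criterion has to be replaced by its dynamical counterpart. A warning guides the construction: one cannot peel the auxiliary elements down to a \emph{single} ideal power, since the single-conclusion analogue of \cref{+x-x}—from $1\in\gen{A}_{S[x]}$ and $1\in\gen{A}_{S[x^{-1}]}$ infer $1\in\gen{A}_S$—is false (it amounts to $\overline{\gen{A}_\id}=\gen{A}_\id$); the powers must instead be \emph{accumulated}, the correct target being the whole sum $\sum_{n\ge1}\gen{A^{(n)}}_\id=\gen{A}_{\id[A]}$. To stay constructive I would therefore mirror the proof of \cref{lemmath2GOEntrelGRL0}: induct on~$m$, using \cref{lemrhgamma} to replace the last forced element $x_m$ by the finite family $A,Ax_m,\dots,Ax_m^{p}$ (respectively with negative powers) over the remaining extension, with base case $1\in\gen{A}_\id\subseteq\gen{A}_{\id[A]}$; the inductive elimination of $x_m$, combining the positive and negative instances by a resultant-type computation in place of the linear combination $n'n_i+nn'_i$ used there, should deposit~$1$ in a finite sum $\gen{A^{(1)}}_\id+\dots+\gen{A^{(N)}}_\id=\gen{A}_{\id[A]}$. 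This elimination is the part I expect to require the most care.
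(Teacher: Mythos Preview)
Your skeleton—the unfolding via \cref{forcing-dedekind}, the treatment of the easy direction, and the recognition that the hard direction is an elimination of the auxiliary~$x_i$—coincides with the paper's. What is missing is precisely the step you flag as ``requiring the most care'': you leave it as a ``resultant-type computation'' and set up the induction over~$\id$, accumulating ideal powers by hand.

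The paper closes this gap with two moves. First, it enlarges the base ring to~$\id[A]$ at the outset; over~$\id[A]$ the ideal~$\gen{A}$ is \emph{integral}, so $\gen{A}_{\id[A]}^{\,n}\subseteq\gen{A}_{\id[A]}$ and your accumulation of powers $\gen{A^{(1)}}_\id+\dots+\gen{A^{(N)}}_\id$ collapses automatically. Second, it names the elimination: the \emph{determinant trick}. The single-variable lemma to be iterated is that for any $T\supseteq\id[A]$, from $1\in\gen{A}_{T[x]}$ and $1\in\gen{A}_{T[x^{-1}]}$ one gets $1\in\gen{A}_T$. Indeed, the two hypotheses give $1\in\gen{A,Ax,\dots,Ax^{p}}_T$ and $1\in\gen{A,Ax^{-1},\dots,Ax^{-p}}_T$, hence $x^k\in\gen{Ax^{-p},\dots,Ax^{p}}_T$ for $-p\le k\le p$; this is $(I-M)(x^k)_{-p}^{\,p}=0$ with $M$ having entries in~$\gen{A}_T$. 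Multiplying by the cofactor matrix yields $\det(I-M)\cdot 1=0$, and expanding $\det(I-M)$ gives $1\in\gen{A}_T$. Peeling off $x_m,x_{m-1},\dots$ one at a time (with $T=\id[A,x_1^{\pm1},\dots,x_{m-1}^{\pm1}]$, etc.) finishes the proof. Your valuative argument is the classical shadow of exactly this Cayley--Hamilton computation; the paper avoids valuations entirely.
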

%
\begin{proof}
  Suppose that $A\vda[{\rh[\mathrm d ]}]1 $, i.e.\ by
  \cref{forcing-dedekind} that there are elements $x_1,\dots,x_n\in G$
  such that $\gen{A}_{\id[x_1^{\pm1},\dots,x_n^{\pm1}]}\ni 1$. It
  suffices to prove the following fact and to use it in an induction
  argument: suppose that $\gen{A}_{\id[A,x]}\ni 1$ and
  $\gen{A}_{\id[A,x^{-1}]}\ni 1$; then $\gen{A}_{\id[A]}\ni 1$. In fact,
  the hypothesis means that $\gen{A,Ax,\dots,Ax^p}_{\id[A]}\ni 1$ and
  $\langle A,Ax^{-1},\dots,Ax^{-p}\rangle_{\id[A]}\ni 1$ for some~$p\geq0$,
  which implies that
  \[
    \forall k\in\lrb{-p..p}\quad \big\langle
    Ax^{-p},\dots,Ax^{-1},A,Ax,\dots,Ax^p\big\rangle_{\id[A]}\ni
    x^k\text,
  \]
  i.e.\ that there is a matrix~$M$ with coefficients
  in~$\gen{A}_{\id[A]}$ such that $M(x^k)_{-p}^p=(x^k)_{-p}^p$, i.e.\
  $(1-M)(x^k)_{-p}^p=0$. Let us now apply the determinant trick:
  multiplying~$1-M$ by the matrix of its cofactors and expanding it
  yields that $\gen{A}_{\id[A]}\ni 1$.

  Conversely, let $a_1,\dots,a_k$ be the elements of~$A$. For
  each~$i$, $a_ia_i^{-1}=1$, so that $\gen{A}_{\id[a_i^{-1}]}\ni 1$ and
  $A\mathrel{{(\rhd_{\mathrm{d}})}_{a_1^{\pm1},\dots,a_k^{\pm1}}}1$
  for every choice of signs with at least one negative sign: the only
  missing choice of signs consists in the hypothesis
  $\gen{A}_{\id[A]}\ni 1$.
\end{proof}

An element $b\in K$ is said to be \emph{integral over the ideal}
$\gen{A}_\id$ when an integral dependence relation
$b^p=\sum_{k=1}^{p} c_k b^{p-k}$ with $c_k\in{\gen{A}_\id}^{k}$
holds for some~$p\geq1$. If $A=\so1$, then this reduces to the same integral dependence
relation with $c_k\in\id$, i.e.\ to $b$~being integral over~$\id$.

Note that if $A$ contains nonintegral elements, i.e.\ elements not
in~$\id$, then ${\gen{A}_\id}^2$ may or may not be contained in
$\gen{A}_\id$: consider respectively e.g.\ the ideal
$\gen{1,\frac ut}$ in $k[T,U]/(T^3-U^2)=k[t,u]$ and ideals in a Prüfer
domain.

\begin{theorem}[{\citealp[Satz~2]{Lor1953}}]\label{TLordivgroup}
  Let \(\id\) be an integral domain and \(\rh[\mathrm d]\) its system
  of Dedekind ideals.
  \begin{asparaenum}
  \item\label{TLordivgroup1} One has \(A\vda[{\rh[\mathrm d ]}]b\)---i.e.\ the element~\(b\) is
    \(\rh[\mathrm d]\)-dependent on~\(A\); there are
    \(x_1,\dots,x_n\) such that
    \( \gen{A}_{\id[x_1^{\pm1},\dots,x_n^{\pm1}]}\ni b\) for every
    choice of signs---if and only if \(b\)~is integral over the
    ideal~\(\gen{A}_\id\).
  \item\label{TLordivgroup2} One has\/
    $A\vda[{\rh[\mathrm d ]}]B$---that is, there are\/
    $x_1,\dots,x_n$ such that\/
    $\gen{AB^{-1}}_{\id[x_1^{\pm1},\dots,x_n^{\pm1}]}\ni 1$ for every
    choice of signs---if and only if\/
    $\sum_{k=1}^p{\langle AB^{-1}\rangle_\id}^{k}\ni 1$ for
    some\/ $p\geq1$, i.e.\ there is an equality\/ $\som_{k=1}^p f_k=1$
    with each\/ $f_k$ a homogeneous polynomial of degree\/ $k$ in the
    elements of\/ $AB^{-1}$ with coefficients in\/ $\id$.
  \item\label{TLordivgroup3} The divisibility group~\(G\) is
    \(\rh[\!\mathrm d]\)-closed, i.e.\ the
    equivalence
    \[{a\vda[{\rh[\!\mathrm d]}]b} \enskip\iff\enskip a\text{ divides
        \(b\)}\] holds, if and only if \(\id\) is integrally closed.
  \end{asparaenum}
\end{theorem}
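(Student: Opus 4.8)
The plan is to derive all three assertions from \cref{lemLordivgroup}, using the fact that the regularisation $\vda[{\rh[\mathrm d]}]$ is a regular entailment relation (\cref{lem1ithmDivLorsi}) and hence satisfies \presord (\cref{R3}) and \equivariance (\cref{R4}); in each case the remaining work is an elementary manipulation of homogeneous polynomials. Throughout I write $G=K\eti/\id\eti$ multiplicatively, so that the neutral element is~$1$, the positive cone is (the image of)~$\id$, and $a\leq_Gb$ means that $a$~divides~$b$, i.e.\ $ba^{-1}\in\id$. The parenthetical reformulations in the statement are merely the definition of the regularisation (\cref{ideflorrel}) unfolded through \cref{forcing-dedekind}: forcing $1\rh[\mathrm d]x_i^{\pm1}$ replaces~$\id$ by~$\id[x_i^{\pm1}]$, so that $A-B\rh[\pm x_1,\dots,\pm x_m]0$ reads $\gen{AB^{-1}}_{\id[x_1^{\pm1},\dots,x_m^{\pm1}]}\ni1$.

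For the first assertion, I would use \equivariance to rewrite $A\vda[{\rh[\mathrm d]}]b$ as $Ab^{-1}\vda[{\rh[\mathrm d]}]1$, and then apply \cref{lemLordivgroup} with $A$ replaced by~$Ab^{-1}$, turning this into the single condition $\gen{Ab^{-1}}_{\id[Ab^{-1}]}\ni1$. It then remains to match this membership with integral dependence. Writing $A=\so{a_1,\dots,a_k}$ and $u_i=a_ib^{-1}$, the statement $1\in\gen{u_1,\dots,u_k}_{\id[u_1,\dots,u_k]}$ says exactly that $1$~is a polynomial in the~$u_i$ with coefficients in~$\id$ and no constant term; grouping this polynomial by homogeneous degree gives $1=\som_{k=1}^pb^{-k}c_k$ with $c_k\in{\gen{A}_\id}^{k}$, and multiplying by~$b^p$ yields the integral dependence relation $b^p=\som_{k=1}^pc_kb^{p-k}$. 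The converse is the same computation read backwards.

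The second assertion reduces to the first: by \cref{ABA-B} of \cref{cor+x-x}, $A\vda[{\rh[\mathrm d]}]B$ is equivalent to $AB^{-1}\vda[{\rh[\mathrm d]}]1$, hence by \cref{lemLordivgroup} to $\gen{AB^{-1}}_{\id[AB^{-1}]}\ni1$; and, exactly as above, this membership says that $1$~lies in $\som_{k=1}^p{\gen{AB^{-1}}_\id}^{k}$ for some~$p\geq1$, which is the asserted equality $\som_{k=1}^pf_k=1$ with $f_k$~homogeneous of degree~$k$ in the elements of~$AB^{-1}$. For the third assertion I would specialise the first to singletons $A=\so a$: here $b$~is integral over the principal ideal $\gen a_\id=\id a$ if and only if $ba^{-1}$ is integral over~$\id$ (divide the relation $b^p=\som_{k=1}^pd_ka^kb^{p-k}$, $d_k\in\id$, by~$a^p$). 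Thus, for all $a,b\in G$, one has $a\vda[{\rh[\mathrm d]}]b$ exactly when $ba^{-1}$ is integral over~$\id$, whereas $a\leq_Gb$ holds exactly when $ba^{-1}\in\id$. Since \presord always gives $a\leq_Gb\implies a\vda[{\rh[\mathrm d]}]b$, the group $G$~is $\rh[\mathrm d]$-closed---i.e.\ the reflection $a\vda[{\rh[\mathrm d]}]b\implies a\leq_Gb$ holds for all $a,b$---precisely when every $c\in G$ integral over~$\id$ already lies in~$\id$, that is, precisely when $\id$~is integrally closed.

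The one genuinely substantial ingredient---the determinant-trick collapse of the forced extensions $\id[x_1^{\pm1},\dots,x_n^{\pm1}]$ into the single ring $\id[Ab^{-1}]$---is already provided by \cref{lemLordivgroup}, so I expect the main obstacle to lie not in any deep computation but in bookkeeping: keeping the additive conventions of \cref{ideflorrel} and the multiplicative conventions of Dedekind ideals aligned, and checking that ``no constant term in the~$u_i$'' corresponds exactly to the homogeneous degrees $k\geq1$, i.e.\ to the powers ${\gen{A}_\id}^{k}$ with $k\geq1$.
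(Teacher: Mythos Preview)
Your proof is correct and follows essentially the same route as the paper: both derive all three items from \cref{lemLordivgroup} via the equivalence \(A\vda[{\rh[\mathrm d]}]b\iff Ab^{-1}\vda[{\rh[\mathrm d]}]1\) and the unpacking \(\gen{A}_{\id[A]}\ni1\iff\exists p\geq1,\ \sum_{k=1}^p{\gen{A}_\id}^k\ni1\), with the integral-dependence translation done by the same homogeneous-degree bookkeeping you spell out. Two minor simplifications: for \cref{TLordivgroup2} you need not invoke \cref{cor+x-x}, since the equivalence \(A\vda[\rh]B\iff AB^{-1}\vda[\rh]1\) is immediate from \cref{ideflorrel} itself; and for \cref{TLordivgroup3} the paper reduces directly to \(a=1\) by equivariance rather than carrying a general~\(a\) through, but your version is equally valid.
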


\begin{proof}
  \begin{asparaenum}
  \item[(\ref{TLordivgroup1}--\ref{TLordivgroup2})]\refstepcounter{enumi}\refstepcounter{enumi}
    This follows from the previous lemma because
    \[
      \begin{aligned}
        A\vda[{\rh[\mathrm d ]}]b\enskip&\iff\enskip Ab^{-1}\vda[{\rh[\mathrm d ]}]1\text,\\
        \ndsp\sum_{k=1}^{p} c_k b^{p-k}=b^p\text{ with }c_k\in{\gen{A}_\id}^{k}\enskip&\ndsp\iff\enskip\sum_{k=1}^p{\langle Ab^{-1}\rangle_\id}^{k}\ni 1\text,\\
        \ndsp\gen A_{\id[A]}\ni 1\enskip&\ndsp\iff\enskip\exists
        p\geq1\enskip\sum_{k=1}^p{\gen{A}_\id}^{k}\ni 1\text.
      \end{aligned}
    \]
  \item[(\ref{TLordivgroup3})] $\rh[\!\mathrm d]$-closedness is
    equivalent to $1\vda[{\rh[\mathrm d ]}]b\implies \id\ni b$; by
    \cref{TLordivgroup1}, $1\vda[{\rh[\mathrm d ]}]b$ holds if and
    only if $b$~is integral over~$\id$.\qedhere
    \end{asparaenum}
\end{proof}

\subsection{The Lorenzen divisor group of an integral
  domain}\label{secalaLor}

In this section, we note consequences of
\cref{ithmDivLorsi,ithGOEntrelGRL} for Lorenzen's theory of
divisibility presented in \cref{subsecaritalaLor}.

\begin{definition}\label{defiLordivgroup}
  Let $\id$~be an integral domain. The \emph{Lorenzen divisor group}
  $\mathrm{Lor}(\id)$ of $\id$ is the Lorenzen group associated by
  \cref{defLorgroup} with the system of Dedekind
  ideals~$\rh[\mathrm{d}]$ for the divisibility group of~$\id$.
\end{definition}

The following version of \cref{ithmDivLorsi,embedLorgroup} takes into
account the informations provided by \cref{TLordivgroup};
\cref{thmDivLor2-1} emphasises the fact that a regular \entrel is
characterised by its restriction to~$\Pfs(G)\times G$ (\cref{ABA-B} of
\cref{cor+x-x}).

\begin{theorem}\label{thmDivLor2}
  Let \(\id\) be an integral domain with field of fractions~\(K\) and
  divisibility group \(G=K\eti/\id\eti\). The
  \entrel~\(\vda[{\rh[\!\mathrm d]}]\) generates the Lorenzen divisor
  group \(\mathrm{Lor}(\id)\) together with a morphism of ordered
  groups \(\mor\colon G\to\mathrm{Lor}(\id)\) that satisfies the
  following properties.
  \begin{asparaenum}
  \item\label{thmDivLor2-1} The \gui{ideal Lorenzen gcd} of
    \(a_1,\dots,a_k\in K\etl\) is characterised by
    \[
      \mor(a_1)\vii\dots\vii \mor(a_k) \leq \mor (b)\enskip \iff
      \begin{aligned}[t]
        &b\text{ is integral over}\\
        &\text{the ideal \(\gen{a_1,\dots,a_k}_\id\).}
      \end{aligned}
    \]
  \item\label{thmDivLor2-2} The morphism \(\mor\) is an embedding if
    and only if \(\id\) is integrally closed.
  \end{asparaenum}
\end{theorem}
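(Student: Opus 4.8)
The plan is to read both assertions off the general machinery assembled above, specialised to the system of Dedekind ideals, with \cref{TLordivgroup} as the only substantive input. First I would fix the construction. By \cref{lem1ithmDivLorsi} the regularisation \(\vda[{\rh[\!\mathrm d]}]\) is a regular entailment relation for~\(G\), so \cref{ithGOEntrelGRL} endows the \trdi it generates with a group law, making it an \grl~\(H\) equipped with a group morphism \(\mor\colon G\to H\); by \cref{defLorgroup,defiLordivgroup} this \grl is precisely \(\mathrm{Lor}(\id)\). Hence \(\vda[{\rh[\!\mathrm d]}]\) generates \(\mathrm{Lor}(\id)\) together with~\(\mor\), as claimed.

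For \cref{thmDivLor2-1} I would invoke the reflection of entailment of \cref{thEntRelNB}: in the generated \trdi one has \(\Vii\mor(A)\leq_H\Vuu\mor(B)\) if and only if \(A\vda[{\rh[\!\mathrm d]}]B\). Taking \(A=\so{a_1,\dots,a_k}\) and the singleton \(B=\so b\) gives
\[
  \mor(a_1)\vii\dots\vii\mor(a_k)\leq_H\mor(b)\enskip\iff\enskip a_1,\dots,a_k\vda[{\rh[\!\mathrm d]}]b\text,
\]
where \(\mor(a_i)\) abbreviates the image of the class of~\(a_i\) in~\(G\), and \cref{TLordivgroup1} rewrites the right-hand side as \gui{\(b\) is integral over the ideal \(\gen{a_1,\dots,a_k}_\id\)}. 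This is the asserted characterisation of the ideal Lorenzen gcd.

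For \cref{thmDivLor2-2} I would use that the order of the \grl~\(H\) is antisymmetric, so \(\mor\) is an embedding exactly when it is order-reflecting, i.e.\ when \(\mor(a)\leq_H\mor(b)\implies a\leq_Gb\) for all \(a,b\in G\). By the case \(k=1\) of \cref{thmDivLor2-1}, \(\mor(a)\leq_H\mor(b)\) is equivalent to \(a\vda[{\rh[\!\mathrm d]}]b\); hence order-reflection of~\(\mor\) amounts to the implication \(a\vda[{\rh[\!\mathrm d]}]b\implies a\leq_Gb\), which is exactly \(\rh[\!\mathrm d]\)-closedness of~\(G\) (\cref{ideflorrel2}). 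By \cref{TLordivgroup3} this holds if and only if \(\id\) is integrally closed. (The forward implication can alternatively be read off from \cref{embedLorgroup}, since \(\rh[\!\mathrm d]\)-closedness grants the embedding directly.)

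I expect no real obstacle: the genuinely substantial work is \cref{TLordivgroup}, already discharged by way of \cref{lemLordivgroup} and the determinant trick, which identifies \(\rh[\!\mathrm d]\)-dependence with integral dependence and \(\rh[\!\mathrm d]\)-closedness with integral closedness. Granting it, the theorem reduces to unwinding \cref{defLorgroup,defiLordivgroup} and specialising the reflection of entailment to singletons on the right; the only point requiring care is the bookkeeping between elements of~\(K\etl\) and their classes in the divisibility group~\(G\).
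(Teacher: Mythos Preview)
Your proposal is correct and matches the paper's own approach: the paper does not spell out a proof but presents the theorem as the specialisation of \cref{ithmDivLorsi,embedLorgroup} to~\(\rh[\mathrm d]\), with \cref{TLordivgroup} supplying the characterisations and \cref{ABA-B} of \cref{cor+x-x} justifying the restriction to singleton right-hand sides---precisely the ingredients you invoke. Your explicit appeal to the reflection of entailment (\cref{thEntRelNB}) and the reduction of ``embedding'' to order-reflection are the natural way to unpack what the paper leaves implicit.
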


\Cref{thmDivLor2-1} lends itself to an extensional formulation in
terms of the integral closure $\Icl_K(a_1,\dots,a_k)$ of ideals
\(\gen{a_1\dots,a_k}_\id\) in the field of fractions~$K$. If
\(a_1,\dots,a_k\in \id\etl\), i.e.\ if one considers integral finitely
generated ideals, it seems more appropriate to find a formulation in
terms of the integral closure $\Icl(a_1,\dots,a_k)$ in the integral
domain. This works because the elements $a_1,\dots,a_k,b\in K\etl$ in a
relation $a_1,\dots,a_k\vda[{\rh[\!\mathrm d]}]b$ may be translated by
an~$x$ into~$\id\etl$. This yields the following theorem, in which we
use the conventional additive notation for divisor groups of an
integral domain. It takes into account the construction of the
Lorenzen group as the Grothendieck \grl of the meet-monoid associated
with the regularisation of the system of Dedekind ideals in the proof
of \cref{ithGOEntrelGRL}, i.e.\ as formal differences
$\Vii\mor(A)-\Vii\mor(B)$; we take advantage of the fact that
$\Vii\mor(A)-\Vii\mor(B)=\Vii\mor(xA)-\Vii\mor(xB)$ for every~$x$, so
that it suffices to use integral ideals in this construction.

\begin{theorem}\label{corthmDivLor2}
  Let \(\id\) be an integral domain. The Lorenzen divisor group
  \(\mathrm{Lor}(\id)\) can be realised extensionally in the following
  way.
  \begin{asparaitem}
  \item A \emph{basic divisor} is realised as the integral closure
    \(\Icl(a_1\dots,a_k)\) of an ordinary, i.e.\ integral finitely
    generated ideal \(\gen{a_1\dots,a_k}_\id\) with
    \(a_1,\dots,a_k\in \id\etl\).
  \item The neutral element of the group, i.e.\ the divisor~\(0\), is
    realised as \(\Icl({1})\).
  \item The meet of two basic divisors is realised as
    \[
      \Icl(a_1,\dots,a_k)\vii\Icl(b_1,\dots,b_\ell)=\Icl(a_1,\dots,a_k,b_1,\dots,b_\ell).
    \]
  \item The sum of two basic divisors is realised as
    \[
      \Icl(a_1,\dots,a_k)+\Icl(b_1,\dots,b_\ell)=\Icl(a_1b_1,\dots\dots,a_kb_\ell).
    \]
  \item The order relation between  basic divisors is realised as
    \[
      \Icl(a_1,\dots,a_k)\leq \Icl(b_1,\dots,b_\ell) \enskip
      \iff\enskip \Icl(a_1,\dots,a_k)\supseteq \Icl(b_1,\dots,b_\ell).
    \]
    In particular, \(\Icl(a)\leq\Icl(b)\) holds if and only if \(b\) is integral over \(\gen{a}_\id\).
  \item Every divisor is realised as the formal difference of two
    basic divisors.
\end{asparaitem}

\end{theorem}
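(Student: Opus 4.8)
The plan is to transport the construction of $\mathrm{Lor}(\id)$ obtained in the proof of \cref{ithGOEntrelGRL} along the assignment $A\mapsto\Icl(A)$, where $\Icl(A)$ is the integral closure of the fractional ideal $\gen{A}_\id$. Recall from \cref{defiLordivgroup} and from that proof that $\mathrm{Lor}(\id)$ is the Grothendieck \grl of the meet-monoid~$M$ associated with the restriction~$\rh$ of $\vda[{\rh[\!\mathrm d]}]$ to $\Pfs(G)\times G$: by \cref{ThSIJaf} its elements are the classes of $\Vii\mor(A)$ for $A\in\Pfs(G)$, and the elements of the group are the formal differences $\Vii\mor(A)-\Vii\mor(B)$, the basic divisors being the $\Vii\mor(A)$. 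It thus suffices to check that $A\mapsto\Icl(A)$ is a structure-preserving bijection from~$M$ onto the integral closures of finitely generated fractional ideals, and then to read off the asserted formulas.

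First I would record the classical facts that $I\mapsto\Icl(I)$ is a closure operator on finitely generated fractional ideals---it is extensive, monotone, and idempotent---and that $\Icl(I)$ is itself an ideal. The crux is then the order on~$M$. Using that $\Vii\mor(A)\leq\Vii\mor(B)$ holds if and only if $\Vii\mor(A)\leq\mor(b)$ for every $b\in B$, and combining this with \cref{thmDivLor2-1}, one gets
\[
  \Vii\mor(A)\leq\mor(b)\iff b\text{ is integral over }\gen{A}_\id\iff b\in\Icl(A)\text,
\]
whence $\Vii\mor(A)\leq\Vii\mor(B)\iff B\subseteq\Icl(A)$. By the closure-operator properties this is in turn equivalent to $\Icl(B)\subseteq\Icl(A)$, that is, to $\Icl(A)\supseteq\Icl(B)$. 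In particular $\Vii\mor(A)=\Vii\mor(B)\iff\Icl(A)=\Icl(B)$, so $A\mapsto\Icl(A)$ descends to a well-defined, injective, order-reversing map on~$M$, surjectivity onto the integral closures being immediate. I expect this to be the main obstacle, as it is where \cref{thmDivLor2} must be repackaged with the semilattice order and with the closure-operator lemmas.

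With the bijection in hand, the operations transport automatically and produce the stated formulas. The neutral element $\mor(0_G)$ corresponds to $\gen{1}_\id=\id$, whose integral closure is $\Icl(1)=\id$. The meet $\Vii\mor(A)\vii\Vii\mor(B)=\Vii\mor(A\cup B)$ is realised by set union, giving $\Icl(a_1,\dots,a_k,b_1,\dots,b_\ell)$. For the sum, the monoid law on~$M$ is set addition (\cref{ThSIJaf}), which under the identification $G=K\eti/\id\eti$ becomes the formation of the products~$a_ib_j$; since $\gen{\{a_ib_j\}}_\id=\gen{A}_\id\,\gen{B}_\id$, the sum is realised as $\Icl(a_1b_1,\dots,a_kb_\ell)$. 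The order relation transports to reverse inclusion exactly as above, its single-generator instance being \cref{thmDivLor2-1}.

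Finally, to see that every divisor is a formal difference of two \emph{basic} divisors, I would rewrite a general element $\Vii\mor(A')-\Vii\mor(B')$ so that both terms are integral: choosing $x\in\id\etl$ that clears the denominators of all elements of $A'\cup B'$ and using equivariance in the form $\Vii\mor(xA')=\mor(x)+\Vii\mor(A')$, one obtains $\Vii\mor(A')-\Vii\mor(B')=\Vii\mor(xA')-\Vii\mor(xB')$, where $xA'$ and $xB'$ now represent integral finitely generated ideals. This completes the extensional realisation.
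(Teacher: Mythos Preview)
Your proposal is correct and follows essentially the same approach as the paper, which does not give a formal proof but only the explanatory paragraph preceding the theorem: the Lorenzen group is the Grothendieck \grl of the meet-monoid attached to the regularisation of~$\rh[\mathrm d]$, the order is read off from \cref{thmDivLor2-1}, and the translation $\Vii\mor(A)-\Vii\mor(B)=\Vii\mor(xA)-\Vii\mor(xB)$ reduces everything to integral ideals. You spell out in more detail the well-definedness of $A\mapsto\Icl(A)$ on~$M$ via the closure-operator properties; the one point you may want to make explicit is the paper's distinction between $\Icl_K$ (integral closure in~$K$, used when $A\in\Pfs(G)$) and $\Icl$ (integral closure in~$\id$, used once $A\subseteq\id\etl$), which is why the reduction to integral generators is needed before the extensional description can be stated as in the theorem.
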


\begin{remarks}\phantomsection\label{remthmDivLor2}
  \begin{asparaenum}
  \item This theorem holds without condition of integral closedness,
    but beware of the following fact: if some
    $b\in K\etl\setminus\id\etl$ is integral over~$\id$, then
    $\Icl_K(1)\ni b$ and $0\leq\mor(b)$; however $\Icl(1)=\id$ and
    $\mor(b)$ is realised as a nonbasic divisor. An example for this
    is $\id=\QQ[t^2,t^3]$, $b=\frac{t^3}{t^2}$,
    $\mor(b) = \mor(t^3) - \mor(t^2)$, $\Icl(t^3)=\gen{t^3,t^4}_\id$,
    $\Icl(t^2)=\gen{t^2,t^3}_\id$.
  \item If every positive divisor is basic, then one can show the
    domain to be Prüfer.
  \item When $\id$ is a Prüfer domain, the Lorenzen divisor group
    $\mathrm{Lor}(\id)$ coincides with the usual divisor group, the
    group of finitely generated fractional ideals defined by Dedekind
    and Kronecker.  In fact, all finitely generated ideals are
    integrally closed in a Prüfer domain, so that
    $\Icl(a_1,\dots,a_k)=\gen{a_1,\dots,a_k}_\id$.
  \item The integral domain $\id=\QQ[t,u]$ is a gcd domain of
    dimension $\geq 2$, so that its divisibility group $G$ is an
    \grl. The domain $\id$ is not Prüfer and the Lorenzen divisor
    group is much greater than $G$: e.g.\ the ideal gcd of $t^3$ and
    $u^3$ in $\mathrm{Lor}(\id)$ corresponds to the integrally closed
    ideal $\gen{t^3,t^2u,tu^2,u^3}$, whereas their gcd in $\id\etl$ is
    $1$, corresponding to the ideal~$\gen{1}$.  In this case, we see
    that $G$ is a proper quotient of $\mathrm{Lor}(\id)$.\eoe
  \end{asparaenum}
\end{remarks}

The following corollary concentrates upon the cancellation property
holding in \grls.  Note that the integral closure of an integral
finitely generated ideal in an integrally closed integral domain is
equal to its integral closure in the field of fractions.

\begin{corollary}[see {\citealp[pages~108--109]{Macaulay}}]\label{corthmDivLor}
  Let \(\id\)~be an integrally closed integral domain. When \(\fa\)~is
  a finitely generated integral ideal~\(\gen{a_1,\dots,a_k}_\id\) with
  \(a_1,\dots,a_k\in \id\etl\), we let
  \(\overline\fa=\Icl(a_1,\dots,a_k)\)~be the integral closure
  of~\(\fa\). Then, if \(\fa\),~\(\fb\), and~\(\fc\) are nonzero
  finitely generated integral ideals, we have the cancellation \property
  \[
    \overline{\fa\,\fb}\supseteq\overline{\fa\,\fc}\enskip\implies\enskip
    \overline\fb\supseteq \overline\fc.
  \]
\end{corollary}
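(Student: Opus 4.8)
The plan is to deduce the cancellation property directly from the extensional realization of the Lorenzen divisor group in \cref{corthmDivLor2}, by recasting it as the translation invariance of the order in the \grl~$\mathrm{Lor}(\id)$. All the substance has already been invested in constructing $\mathrm{Lor}(\id)$ and establishing that dictionary, so that the cancellation which classically calls for Macaulay's argument reduces to the triviality that one may add a group inverse.

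First I would collect the relevant entries of the dictionary of \cref{corthmDivLor2}. Each integral closure $\overline\fa$ of a finitely generated integral ideal is a basic divisor, hence an element of $\mathrm{Lor}(\id)$; the sum of two basic divisors is the integral closure of the product ideal, so that $\overline{\fa\,\fb}=\overline\fa+\overline\fb$ and $\overline{\fa\,\fc}=\overline\fa+\overline\fc$; and the order between divisors is reverse inclusion of their integral closures. At this point I use that $\id$~is integrally closed, which guarantees that the integral closure computed in~$\id$ agrees with the integral closure computed in the field of fractions (as noted right before the statement), so that the realization written with~$\Icl$ is indeed legitimate. Then I would translate the hypothesis $\overline{\fa\,\fb}\supseteq\overline{\fa\,\fc}$ through this dictionary into the inequality $\overline\fa+\overline\fb\leq\overline\fa+\overline\fc$ in $\mathrm{Lor}(\id)$; since $\mathrm{Lor}(\id)$~is an ordered group, adding $-\overline\fa$ to both sides yields $\overline\fb\leq\overline\fc$, and the order entry of the dictionary reads this back as $\overline\fb\supseteq\overline\fc$.

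I do not anticipate a genuine obstacle here. The only point that deserves care is the verification that the integral closure taken in~$\id$ is really the element of $\mathrm{Lor}(\id)$ it is meant to be, which is exactly where integral closedness enters through the identification $\Icl=\Icl_K$; without it, the realization would have to be phrased with nonbasic divisors (compare the caveat in \cref{remthmDivLor2}). Once \cref{corthmDivLor2} is granted, the cancellation is immediate from the group structure.
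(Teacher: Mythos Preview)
Your proposal is correct and matches the paper's intended argument: the sentence introducing the corollary says precisely that it ``concentrates upon the cancellation property holding in \grls'', and your proof carries this out by using the dictionary of \cref{corthmDivLor2} to translate $\overline{\fa\,\fb}\supseteq\overline{\fa\,\fc}$ into $\overline\fa+\overline\fb\leq\overline\fa+\overline\fc$ in $\mathrm{Lor}(\id)$ and then cancelling $\overline\fa$. You also correctly identify where integral closedness is used, namely for the identification $\Icl=\Icl_K$ stated just before the corollary; the paper additionally points out a second route via Pr\"ufer's \cref{thSIPrufer} (see \cref{lemDivLor2}), but your argument is the one the paper foregrounds.
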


This corollary is a key result for ``containment in the wider sense''
as considered by Leopold \citet{kronecker83} (see \citet{penchevre},
pages 36--37). H. S.~\citet{Macaulay} gives a proof based on the
multivariate resultant. We may also deduce it as a consequence of
Prüfer's \cref{thSIPrufer} (see \cref{lemDivLor2item2} of
\cref{lemDivLor2}, compare \citealp[\S~6]{Pru1932},
\citealp[Nr.~46]{Kru35}).

\section{\Sis and Prüfer's theorem}\label{secGoembedsGrl}

In this section, we account for another way to obtain the Lorenzen
group associated with an \si for an ordered group
(\cref{defLorgroup}). This way has historical precedence, as it dates
back to \citeauthor{Lor1939}'s Ph.D.\ thesis \citeyearpar{Lor1939},
that builds on earlier work by \citet{Pru1932}.  In the case of the
system of Dedekind ideals, this approach provides another way of
understanding the Lorenzen divisor group of an integral domain.

\subsection{Prüfer's properties \texorpdfstring{$\Gamma$}{Gamma} and
  \texorpdfstring{$\Delta$}{Delta}}

Let us now express cancellativity of the \mmonoid as a property of the
\si itself (a.k.a.\ ``endlich arithmetisch brauchbar'', ``e.a.b.'',
see \cref{remhist3}), as in \citealt[\S~3]{Pru1932}.
\begin{lemma}[Prüfer's \Property~$\Gamma$ of cancellativity]\label{lemSIJaf}
  Let \(\rh\)~be an \si for an ordered group~\(G\). The associated
  \mmonoid~\(M\) is cancellative, i.e.\ \(\Vii(A+X)=_M\Vii(B+X)\)
  implies \(\Vii A=_M\Vii B\), if and only if the following property
  holds:
  \begin{equation}
     A+ X\leq_{\rh}b+ X \enskip\implies\enskip A\rh b\text.\label{propGamma}
  \end{equation}
  This holds if and only if
  $ A+ X\leq_{\rh} X\implies A\rh 0$.
\end{lemma}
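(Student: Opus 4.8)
My plan is to translate everything into the associated meet-monoid~$M$ of \cref{ThSIJaf}, where the relevant facts are governed entirely by the preorder~$\leq_{\rh}$, its associated equivalence~$=_{\rh}$, monotonicity, \equivariance, and the compatibility of set addition with set union, namely $(A,b)+X=(A+X,b+X)$. Under this dictionary, cancellativity of~$M$ reads $(A+X)=_{\rh}(B+X)\implies A=_{\rh}B$, an assertion about the equivalence~$=_{\rh}$, whereas \cref{propGamma} is its one-sided, \emph{inequality} version. I would pass between the two by adjoining~$b$ to~$A$ and exploiting that an inequality $u\leq v$ in a meet-semilattice amounts to the equality $u\vii v=u$. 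No deep step is involved; the only genuine subtlety is keeping the translation straight, and in particular noticing that the equality statement (cancellativity) and the inequality statement (\cref{propGamma}) interchange through this adjunction of~$b$.

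For the implication cancellativity~$\implies$~\cref{propGamma}, I would assume $A+X\leq_{\rh}b+X$. Monotonicity gives $(A+X,b+X)\leq_{\rh}(A+X)$, while $A+X$ entails every element of~$A+X$ (reflexivity) and every element of~$b+X$ (the hypothesis), so $(A+X)\leq_{\rh}(A+X,b+X)$ too; hence $(A+X,b+X)=_{\rh}(A+X)$. Rewriting the left-hand side as $(A,b)+X$ yields $(A,b)+X=_{\rh}A+X$, and cancelling~$X$ produces $(A,b)=_{\rh}A$, which is exactly $A\rh b$. For the converse, I would assume $(A+X)=_{\rh}(B+X)$ and fix $b\in B$: from $B\rh b$ I get $B+x\rh b+x$ by \equivariance and then $B+X\rh b+x$ by monotonicity for every $x\in X$, i.e.\ $B+X\leq_{\rh}b+X$; chaining with the hypothesis gives $A+X\leq_{\rh}b+X$, so \cref{propGamma} yields $A\rh b$. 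As $b\in B$ is arbitrary this gives $A\leq_{\rh}B$, and by symmetry $B\leq_{\rh}A$, whence $A=_{\rh}B$.

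Finally, for the equivalence of \cref{propGamma} with $A+X\leq_{\rh}X\implies A\rh0$, the group structure of~$G$ enters, through the reversibility of \equivariance. One direction is immediate: $A+X\leq_{\rh}X\implies A\rh0$ is just the instance $b=0$ of \cref{propGamma}, since $0+X=X$. For the other, given $A+X\leq_{\rh}b+X$, I would translate by~$-b$ to obtain $(A-b)+X\leq_{\rh}X$, apply the assumed property to get $A-b\rh0$, and translate back by~$b$ to recover $A\rh b$, which is \cref{propGamma}. The expected pressure point is thus not any hard estimate but simply ensuring that the two equivariant translations are legitimate (they are, precisely because $G$ is a group), and that the inequality/equality bridge of the first paragraph is applied in the right direction.
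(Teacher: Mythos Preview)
Your proof is correct and essentially the same as the paper's. The only cosmetic difference is in the direction cancellativity $\implies$ \cref{propGamma}: the paper first restates cancellativity of the meet-monoid in its order form ($A+X\leq_{\rh}B+X\implies A\leq_{\rh}B$) and then specialises to $B=\{b\}$, whereas you convert the given inequality into the equality $(A,b)+X=_{\rh}A+X$ before cancelling---these are just the two sides of the standard semilattice dictionary between $\leq$ and $\vii$.
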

%
\begin{proof}
  The second implication, a particular case of the first one, implies
  the first one by equivariance. Let us work with the first
  implication.  Cancellativity means that if
  $ A+ X\leq_{\rh} B+ X$, then
  $ A\leq_{\rh} B$.  \Property~\cref{propGamma} is necessary:
  take~\hbox{$B=\so b$}.  Let us show that it is sufficient. Assume
  \hbox{$ A+ X\leq_{\rh} B+ X$} and let $b\in B$. As
  $B\rh b$, we have ${ B+ X}\leq_{\rh}{b+ X}$, whence
  ${ A+ X}\leq_{\rh}{b+ X}$. So $A\rh b$. Since this holds
  for each $b\in B$, we get $ A\leq_{\rh} B$.\qedhere
\end{proof}
\begin{remark}\label{remGamma}
  The original version of Prüfer's Property~$\Gamma$ states, for a
  set-theoretical star-ope\-ra\-tion $A\mapsto A_r$ on nonempty
  finitely enumerated subsets of~$G$ as considered in \cref{remJafsi2}
  of \cref{remJafsi}, the cancellation property
  $(A+X)_r\supseteq(B+X)_r\implies A_r\supseteq B_r$.\eoe
\end{remark}

Prüfer's \cref{thSIPrufer} will reveal the significance of the
following definition. We shall check in \cref{propGRLor} that it
agrees with \cref{ideflorrel}.

\begin{definition}[Prüfer's \Property~$\Delta$ of integral
  closedness]\label{def-closed}
  Let $\rh$ be an \si for an ordered group~$G$.  The group~$G$ is
  \emph{$\rh$-closed} if
  ${ X\leq_{\rh}b+ X}\implies{0\leq_G b}$.
\end{definition}

\begin{remark}
  The original version of Prüfer's Property~$\Delta$ states the
  cancellation property ${X_r\supseteq b+X_r}\implies{0\leq_G b}$.\eoe
\end{remark}

\subsection{Forcing cancellativity: Prüfer's theorem}

When the monoid~$M$ in \cref{ThSIJaf} is not cancellative, it is
possible to adjust the \si in order to straighten the situation.  A
priori, it suffices to consider the Grothendieck \grl of~$M$
(\cref{thGoembedsGrl}).  But we have to see that this corresponds to
an \si for~$G$, and to provide a description for it.
The following theorem is a reformulation of Prüfer's theorem
\citep[\S~6]{Pru1932}. We follow the proofs in
\citealt[pages~42--43]{Jaf1960}.  In fact, the language of \scentrels
simplifies the proofs. Jaffard's statement corresponds to
\cref{thSIPrufer1,thSIPrufer4}, and
\cref{thSIPrufer2,thSIPrufer3} have been added by us.

\begin{theorem}[Prüfer's theorem]\label{thSIPrufer}
  Let \(\rh\) be an \si for an ordered group~\(G\). We define the
  relation \(\Vda\) between~\(\Pfs(G)\) and~\(G\) by
  \[
    A\Vda b \enskip\equidef\enskip\exists X\in \Pfs(G)\enskip  A+ X \leq_{\rh}b+ X\text.
  \]
  \begin{asparaenum}
  \item\label{thSIPrufer1} The relation \(\Vda\) is an \si for \(G\),
    and the associated \mmonoid\allowbreak \(M_\mathrm{a}\)
    (\cref{ThSIJaf}) is cancellative.
  \item\label{thSIPrufer2} The \mmonoid \(M_\mathrm{a}\) embeds into
    its Grothendieck \grl~\(H_\mathrm{a}\).
  \item\label{thSIPrufer3} The system~\(\Vda\) is the finest
    \si~\(\rh'\) coarser than~\(\rh\) such that \(M_\mathrm{a}\) is
    cancellative, i.e.\ forcing
    \[
       A+ X\leq_{\rh'}b+ X \enskip\implies\enskip A\rh'
      b\text.
    \]
  \item\label{thSIPrufer4} The implication
    \(a\Vda b\implies a\leq_G b\) holds if (and only if) \(G\)~is
    \(\rh\)-closed (\cref{def-closed}); in this case, \(G\) embeds
    into~\(H_\mathrm{a}\).
  \end{asparaenum}
\end{theorem}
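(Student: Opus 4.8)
The plan is to handle the four parts in order, with essentially all the work concentrated in \cref{thSIPrufer1}; the other three then follow formally from the machinery already set up. So I would first check that $\Vda$ is an \si. Reflexivity, \presord, and \equivariance are immediate with the witness $X=\so0$ (using the corresponding \property of $\rh$), monotonicity comes from monotonicity of $\rh$ applied to the witnessing inequality, and the same choice $X=\so0$ shows at once that $\Vda$ is coarser than $\rh$. The only delicate axiom is transitivity, and here the decisive observation is that the hypothesis $A+X\leq_{\rh}c+X$ lets one \emph{absorb} $c$: since $A+X\rh c+x$ for every $x\in X$, monotonicity together with this hypothesis gives the equality $(A,c)+X=_{\rh}A+X$ in the preorder $\leq_{\rh}$. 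Assuming then $A+X\leq_{\rh}c+X$ and $(A,c)+Y\leq_{\rh}b+Y$, I would add $X$ to the second inequality (set addition descends to $\leq_{\rh}$, as in the proof of \cref{ThSIJaf}) to obtain $(A,c)+(X+Y)\leq_{\rh}b+(X+Y)$, and then replace the left-hand side by the absorption equality to get $A+(X+Y)\leq_{\rh}b+(X+Y)$, that is $A\Vda b$ with witness $X+Y$.

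Cancellativity of the associated \mmonoid~$M_{\mathrm a}$ I would obtain through Prüfer's \Property~$\Gamma$ (\cref{lemSIJaf}), i.e.\ by proving $A+X\leq_{\Vda}b+X\implies A\Vda b$. Unfolding the hypothesis gives, for each $x\in X$, a witness $X_x$ with $(A+X)+X_x\leq_{\rh}(b+x)+X_x$; since adding an arbitrary set to a witness preserves such an inequality, I would pass to the common witness $W=\sum_{x\in X}X_x$, so that $(A+X)+W\leq_{\rh}(b+x)+W$ holds simultaneously for all $x\in X$, i.e.\ $(A+X)+W\leq_{\rh}b+(X+W)$. Setting $Z=X+W$, this reads $A+Z\leq_{\rh}b+Z$, whence $A\Vda b$. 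This completes \cref{thSIPrufer1}. Now \cref{thSIPrufer2} is just \cref{thGoembedsGrl3} of \cref{thGoembedsGrl} for the cancellative \mmonoid~$M_{\mathrm a}$, and \cref{thSIPrufer3} follows because $\Vda$ is coarser than $\rh$ and satisfies \Property~$\Gamma$, while conversely any \si $\rh'$ coarser than $\rh$ with \Property~$\Gamma$ is coarser than $\Vda$: if $A+X\leq_{\rh}b+X$ then $A+X\leq_{\rh'}b+X$, so $A\rh'b$.

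For \cref{thSIPrufer4} I would rewrite, via \equivariance, the statement $a\Vda b$ with witness $X$ as $X\leq_{\rh}(b-a)+X$; this makes the equivalence with \cref{def-closed} immediate, since $\rh$-closedness applied to $b-a$ is exactly $X\leq_{\rh}(b-a)+X\implies 0\leq_G b-a$, and conversely the case $a=0$ recovers $\rh$-closedness from $a\Vda b\implies a\leq_Gb$. Together with $a\leq_Gb\implies a\rh b\implies a\Vda b$, this shows that under $\rh$-closedness the natural map $G\to M_{\mathrm a}$ both preserves and reflects order; composing with the order-reflecting embedding $M_{\mathrm a}\hookrightarrow H_{\mathrm a}$ of \cref{thSIPrufer2} then embeds~$G$ into~$H_{\mathrm a}$.

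The main obstacle is the transitivity of $\Vda$: it is the single place where the two witnesses $X$ and $Y$ must be merged, and the absorption identity $(A,c)+X=_{\rh}A+X$ is the nonobvious ingredient that makes this merge go through (the same enlarge-the-witness idea reappears, more simply, in the common-witness step for \Property~$\Gamma$). Everything after \cref{thSIPrufer1} is bookkeeping on top of \cref{thGoembedsGrl} and \cref{lemSIJaf}.
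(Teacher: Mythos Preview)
Your proof is correct and follows essentially the same route as the paper's: combine the two witnesses by set addition for transitivity, pass to a common witness $W=\sum_{x\in X}X_x$ for cancellativity, and invoke \cref{lemSIJaf} and \cref{thGoembedsGrl} for the rest. One small slip: \equivariance of $\Vda$ does not use the witness $X=\so0$ but the \emph{same} witness as in the hypothesis $A\Vda b$ (apply \equivariance of $\rh$ to $A+X\leq_{\rh}b+X$); your ``absorption'' identity $(A,c)+X=_{\rh}A+X$ is exactly the cut the paper performs after enlarging both witnesses to $X+Y$.
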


\begin{proof}
  Note that if $ A+ X\leq_{\rh}b+ X$, then
  $ A+ X+ Y\leq_{\rh}b+ X+ Y$ for all~$Y$ (see the
  proof of \cref{ThSIJaf} on \cpageref{proofThSIJaf}). This makes the
  definition of~$\Vda$ very easy to use.  In the proof below, we have
  two preorder relations on~$\Pfs(G)$ ($\leq_{\rh}$ \hbox{and
    $\leq_\mathrm{a}$}), and we shall proceed as if they were order
  relations (i.e.\ we shall descend to the quotients).

\begin{asparaenum}
\item[(\ref{thSIPrufer1})]
  \leavevmode\enspace\Bullet\enspace\emph{Reflexivity and \presord}
  (of the relation $\Vda$). Setting $X=\so{0}$ in the definition
  of~$\Vda$ shows that $a\leq_G b$ implies $a\Vda b$.
\item[\Bullet\enspace\emph{Monotonicity}.] It suffices to note that
  the elements~${(A,A')+X}$ and~${A+X},{A'+X}$ of $\Pfs(G)$ are the
  same: therefore, if $ A+ X\leq_{\rh}b+ X$, then
  $  (A,A')+ X \leq_{\rh}b+ X$.
\item[\Bullet\enspace\emph{Transitivity}.] Assume $A \Vda c$ and
  $A,c\Vda b$: we have an $X$ such that
  $ A+ X\leq_{\rh}c+ X$ and a $Y$ such that
  $ (A,c)+ Y\leq_{\rh}b+ Y$; these inequalities imply
  respectively $ A+X+Y\leq_{\rh} c+X+Y$ and
  $ A+X+Y, c+X+Y\leq_{\rh} b+X+Y$; we deduce
  $ A+X+Y\leq_{\rh} b+X+Y$, so that $A\Vda b$.
\item[\Bullet\enspace\emph{\Equivariance}.] If $A\Vda b$, we have an
  $X$ such that $ A+ X\leq_{\rh}b+ X$, so that, since
  $\leq_{\rh}$~is \equivariant, $x+ A+ X\leq_{\rh}x+b+ X$.
  This yields $x+ A\Vda x+b$.
\item[\Bullet\enspace\emph{Cancellativity} (of the
  \mmonoid\/~$M_\mathrm{a}$).]  Let us denote by~$\leq_\mathrm{a}$ the
  order relation associated to~$\Vda$. By \cref{lemSIJaf}, it suffices
  to suppose that $ A+ X\leq _\mathrm{a} X$ and to deduce
  that $A\Vda 0$.  But the hypothesis means that $A+X\Vda x$ for each
  $x\in X$, i.e.\ that for each $x\in X$ there is a $Y_x$ such that
  $ A+ X+ Y_x\leq_{\rh}x+ Y_x$.  Let
  $Y=\sum_{x\in X}Y_x$: we have
  $ A+ X+ Y\leq_{\rh}x+ Y$.  As $x\in X$~is arbitrary,
  $ A+ X+ Y\leq_{\rh} X+ Y$: this yields $A\Vda 0$
  as desired.
\item[(\ref{thSIPrufer2})] Follows from \cref{thSIPrufer1} by
  \cref{thGoembedsGrl}.
\item[(\ref{thSIPrufer3})] This is immediate from the definition
  of~$\Vda$: it has been defined in a minimal way as coarser
  than~$\rh$ and forcing the cancellativity of the monoid
  $M_\mathrm{a}$ as characterised in \cref{lemSIJaf}.
\item[(\ref{thSIPrufer4})] If $a\Vda b$, then we have an $X$ such that
  $a+ X\leq_{\rh}b+ X$, so that by a translation
  $ X\leq_{\rh}(b-a)+ X$. The hypothesis on~$G$ yields
  $0\leq_G b-a$. By a translation, we get $a\leq_G b$.\qedhere
\end{asparaenum}
\end{proof}
%

\begin{comment}
  This is the approach proposed in \citealt[\S~4]{Lor1939}. Lorenzen
  abandoned it in favour of \cref{ideflorrel} for the purpose of
  generalising his theory to noncommutative groups. See also
  \cref{remhist3,comment-s-system}.\eoe
\end{comment}

\begin{definition}[see {\citealp[page~546]{Lor1939}}, or {\citealp[II,
    \S~2, 2]{Jaf1960}}]\label{defiGRLor}
  Let \(\rh\) be an \si for an ordered group~\(G\). The \grl in \cref{thSIPrufer2} of \cref{thSIPrufer} is the
  \emph{Lorenzen group} associated with~$\rh$.
\end{definition}

\begin{proposition}[{\citealp[Satz~27]{Lor1950}}]\label{propGRLor}
  The definition of \(A\Vda0\) in \cref{thSIPrufer} agrees with
  \cref{ideflorrel} of \(A\vda[\rh]0\). So \cref{def-closed} of
  \(\rh\)-closedness agrees with that of \cref{ideflorrel}, and
  \cref{defiGRLor} of the Lorenzen group agrees with that of
  \cref{defLorgroup}.
\end{proposition}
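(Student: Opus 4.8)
The plan is to avoid matching the two definitions term by term and instead to identify both relations through a single universal property: being \emph{the finest \si coarser than~$\rh$ whose associated \mmonoid is cancellative}. \Cref{thSIPrufer3} of \cref{thSIPrufer} already grants this property to~$\Vda$, so it will suffice to prove that the restriction of the regularisation to~$\Pfs(G)\times G$ enjoys it as well; the uniqueness of a finest element then forces the two relations to coincide on~$\Pfs(G)\times G$, hence in particular for the right-hand side~$0$, which is all the proposition asks.

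First I would write $\rh[\mathrm r]$ for the \si obtained by restricting~$\vda[\rh]$ to~$\Pfs(G)\times G$. By \cref{lem1ithmDivLorsi} the regularisation~$\vda[\rh]$ is a regular \entrel, so $\rh[\mathrm r]$ is a regular \si in the sense of \cref{defregsysideals}, and \cref{corthGOEntrelGRL} converts this regularity into cancellativity of its \mmonoid; moreover $\rh[\mathrm r]$ is coarser than~$\rh$ by the very statement of \cref{ithmDivLorsi}. Thus $\rh[\mathrm r]$ belongs to the class of \sis coarser than~$\rh$ with cancellative \mmonoid.

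Next I would check that $\rh[\mathrm r]$ is the finest member of that class. Let $\rh'$ be any \si coarser than~$\rh$ whose \mmonoid is cancellative. By \cref{corthGOEntrelGRL} it is regular, hence by \cref{defregsysideals} the restriction of a regular \entrel~$\vda'$, whose restriction~$\rh'$ is coarser than~$\rh$. Since \cref{ithmDivLorsi} characterises~$\vda[\rh]$ as the finest regular \entrel with that last property, $\vda'$ is coarser than~$\vda[\rh]$; restricting to~$\Pfs(G)\times G$ shows that $\rh'$ is coarser than~$\rh[\mathrm r]$. Hence $\rh[\mathrm r]$ is finest, and comparison with \cref{thSIPrufer3}, which grants the same universal property to~$\Vda$, yields $\Vda=\rh[\mathrm r]$ by uniqueness of the finest element, i.e.\ $A\Vda b\iff A\vda[\rh]b$ for all~$(A,b)$ and in particular $A\Vda0\iff A\vda[\rh]0$. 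The agreement of the two notions of $\rh$-closedness (\cref{def-closed} and \cref{ideflorrel}) then follows by translation, as both reduce to ${0\Vda b}\implies{0\leq_Gb}$, and the two Lorenzen groups (\cref{defiGRLor} and \cref{defLorgroup}) agree because $\Vda$ and $\rh[\mathrm r]$ share the same \mmonoid, hence the same Grothendieck \grl.

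The main difficulty is one of bookkeeping rather than of substance: I must invoke the equivalence ``regular $\iff$ cancellative \mmonoid'' (\cref{corthGOEntrelGRL}) consistently, so as to bridge Prüfer's cancellativity characterisation (\cref{thSIPrufer3}) with Lorenzen's regularity characterisation (\cref{ithmDivLorsi}), and I must read ``finest'' uniformly so that its uniqueness may be applied. The tempting alternative---unwinding $\exists X\ (A+X\leq_{\rh}X)$ directly into the sign-indexed forcing of \cref{ideflorrel} by means of \cref{lemrhgamma}, in the manner of \cref{lemmath2GOEntrelGRL0}---is considerably more combinatorial, and I would avoid it in favour of the universal-property argument above.
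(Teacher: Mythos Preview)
Your proof is correct and takes a genuinely different route from the paper's. The paper establishes the equivalence by an explicit combinatorial construction in both directions: from $A+Y\leq_{\rh_{\!x}}Y$ and $A+Z\leq_{\rh_{\!-x}}Z$ it exhibits the concrete witness $X=Y+Z+\{-qx,\dots,px\}$ satisfying $A+X\leq_{\rh}X$ (and iterates); conversely, from $A+X\leq_{\rh}X$ with $X=\{x_1,\dots,x_m\}$ it takes the differences $x_i-x_j$ as forcing elements and argues that any sign pattern linearly orders the~$x_i$, reducing~$\Vii X$ to a single term. You instead recognise both $\Vda$ and the restriction of~$\vda[\rh]$ as the unique finest \si coarser than~$\rh$ with cancellative \mmonoid, bridging \cref{thSIPrufer3} of \cref{thSIPrufer} with \cref{corthGOEntrelGRL} and \cref{ithmDivLorsi}. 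Your argument is shorter and reveals the proposition as a formal consequence of universal properties already secured; the paper's argument, by contrast, is self-contained, provides explicit witnesses in Lorenzen's constructive spirit, and does not rely on the chain of results in \cref{sec:cancellativity,sec:proof-thref,sec:proof-ithmDivLorsi}.
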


\begin{proof}
  This proposition expresses that, given an \si~$\rh$ for an ordered
  group $G$ and \hbox{an $A\in\Pfs(G)$}, we have $A\vda[\rh]0$
  (\cref{ideflorrel}) if and only if ${A+X} \leq_{\rh}X$ for
  some~$X\in \Pfs(G)$.  First, $A+Y \leq_{\rh_{\!x}} Y$ and
  $A +Z\leq_{\rh_{\!-x}} Z$ imply $A+X \leq_{\rh}X$ for some~$X$. In
  fact, we have~$p$ and~$q$ such that
  \[
    \begin{aligned}
      A+Y, A+Y + x, \dots, A+Y + px &\leq_{\rh} Y\text{ and}\\
      A+Z, A+Z - x, \dots,  A+Z - qx&\leq_{\rh} Z\text{ hold,}
    \end{aligned}
  \]
  which yield that for $z\in Z$, $j\leq q$, $y\in Y$, and $k\leq p$,
  \[
    \begin{aligned}
      A+Y+z-jx,\dots,A+Y+z+(p-j)x&\leq_{\rh} Y+z-jx\text{ and}\\
      A+y+Z+kx,\dots,A+y+Z+(k-q)x&\leq_{\rh} y+Z+kx\text{ hold,}
    \end{aligned}
  \]
  so that $A + X \leq_{\rh} X$ for $X = Y+Z+\{ -qx,\dots, px \}$.

  In the other direction, assume that $A + X \rh x_i$ for each $x_i$
  in $X=\so{x_1,\dots,x_m}$.  Let $x_{i,j}={x_i-x_j}$ ($i<j\in\lrbm$)
  and let us prove that $A \rh_{\pm x_{1,2},\pm x_{1,3},\dots,\pm x_{m-1,m}} 0$.
  In fact, for any system of constraints
  $(\epsilon_{1,2} x_{1,2},\epsilon_{1,3} x_{1,3},\dots,\epsilon_{m-1,m} x_{m-1,m})$ with
  $\epsilon_{i,j}=\pm1$, the elements $x_i$ are linearly ordered in the associated
  \mmonoid~$M_\epsilon$. E.g.\
  $x_1\leq_{M_\epsilon} x_2\leq_{M_\epsilon} \dots\leq_{M_\epsilon}
  x_m$ holds, in which case
  \[
    \ndsp\Vii(A+x_1,\dots,A+x_m)=_{M_\epsilon}\Vii(A+x_1)\leq_{M_\epsilon}x_1
  \]
  holds, which yields $ \Vii A\leq_{M_\epsilon} 0$ by a
  translation.\qedhere
\end{proof}

\begin{remarks}\phantomsection\label{lemDivLor2}
  \begin{asparaenum}
  \item Informally, the content of this proposition may be expressed as
    follows. By starting from $\rh$ and by adding new pairs~$(A,b)$
    such that $A\rh' b$, on the one side Prüfer forces the
    cancellativity of the \mmonoid $M_\mathrm{a}$, and on the other
    side Lorenzen forces $\rh$ to become the restriction of an \entrel
    (which is still an \si, as follows trivially from Lorenzen's
    definition). In fact, each approach realises both aims, but each
    one realises its own aim in a minimal way. So they give the same
    result.
  \item\label{lemDivLor2item2} \Cref{thSIPrufer} allows one to recover
    the results of \cref{TLordivgroup} and of \cref{thmDivLor2} in the
    Prüfer approach. In particular, one may check that
    $A\mathop{(\rhd_{\!\mathrm d})_{\mathrm a}}b$ holds if and only if
    $b$ is integral over the fractional ideal $\gen{A}_\id$ \citep[by
    applying the determinant trick, see][\S~6]{Pru1932}. One may also
    check that the hypothesis in \cref{thSIPrufer4} of
    \cref{thSIPrufer} holds if and only if $\id$ is integrally
    closed. In this case, the elements $\geq 1$ of the
    \mmonoid~$M_{\mathrm a}$ in \cref{thSIPrufer2} of
    \cref{thSIPrufer} can be identified with the integrally closed ideals
    generated by nonempty finitely enumerated subsets~$A$
    of~$\id\etl$; therefore \cref{thSIPrufer1} of \cref{thSIPrufer}
    yields the cancellation \property stated in
    \cref{corthmDivLor}.\eoe
  \end{asparaenum}
\end{remarks}

\begin{acknowledgement}
  This research has been supported through the program ``Research in
  pairs'' by the Mathematisches Forschungsinstitut Oberwolfach in 2016
  and through the French ``Investissements d'avenir'' program, project
  ISITE-BFC, contract ANR-15-IDEX-03. The second and third authors
  benefitted from the hospitality of the university of Gothenburg for
  leading this research. We also warmly thank the referee for his very
  careful reading.
\end{acknowledgement}

\end{document}